\documentclass{amsart}

\usepackage{amssymb}
\usepackage{amscd}
\usepackage{amsmath}
\usepackage{enumerate}
\usepackage{tensor}
\usepackage{mathtools}
\usepackage[all,cmtip, arrow, matrix, curve]{xy}
\usepackage{tikz-cd}
\usepackage{tikz-3dplot}
\usepackage{graphicx}
  \usepackage{wrapfig}
  \usetikzlibrary{decorations.pathreplacing,backgrounds, matrix}
  \usetikzlibrary{automata, patterns, calc}

\usepackage{afterpage}
\usepackage{setspace}

\usepackage{graphicx,import} 
\usepackage{caption}
\usepackage{subcaption}
\usepackage{color}
\usepackage{transparent}
\usepackage{hyperref}
\usepackage[normalem]{ulem}

\usepackage{color}

\newtheorem{theo}{Theorem}
\newtheorem{lema}[theo]{Lemma}
\newtheorem{cor}[theo]{Corollary}
\newtheorem{prop}[theo]{Proposition}

\newtheorem{definition}[theo]{Definition}

\newtheorem{remark}[theo]{Remark}
\newtheorem{notation}[theo]{Notation}

\newtheorem{example}[theo]{Example}
\newtheorem{problem}[theo]{Problem}


\newcommand{\BB}{{\mathbb{B}}}

\newcommand{\NN}{{\mathbb{N}}}

\newcommand{\QQ}{{\mathbb{Q}}}
\newcommand{\RR}{{\mathbb{R}}}

\newcommand{\SSS}{{\mathbb{S}}}
\newcommand{\ZZ}{{\mathbb{Z}}}

\newcommand{\calA}{{\mathcal{A}}}

\newcommand{\calG}{{\mathcal{G}}}
\newcommand{\calH}{{\mathcal{H}}}

\newcommand{\calN}{{\mathcal{N}}}

\newcommand{\calP}{{\mathcal{P}}}
\newcommand{\calS}{{\mathcal{S}}}

\newcommand{\calU}{{\mathcal{U}}}


\newcommand{\p}{\frak{p}}
\newcommand{\q}{\frak{q}}
\newcommand{\fP}{\frak{P}}
\newcommand{\fh}{\frak{h}}

\newcommand{\comp}{{\circ}}

\begin{document}

\title[]{Moderately Discontinuous Homotopy}

\author{J. Fern\'andez de Bobadilla}
\address{Javier Fern\'andez de Bobadilla:  
(1) IKERBASQUE, Basque Foundation for Science, Maria Diaz de Haro 3, 48013, 
    Bilbao, Bizkaia, Spain
(2) BCAM  Basque Center for Applied Mathematics, Mazarredo 14, E48009 Bilbao, 
Basque Country, Spain 
(3) Academic Colaborator at UPV/EHU} 
\email{jbobadilla@bcamath.org}

 \author{S. Heinze}
 \address{Sonja Heinze:  
BCAM  Basque Center for Applied Mathematics, Mazarredo 14, E48009 Bilbao, 
Basque Country, Spain }
\email{sheinze@bcamath.org}

 \author{M. Pe Pereira}
 \address{Facultad de Ciencias Matematicas  and Instituto de Matem\'atica Interdisciplinar, Universidad Complutense de Madrid, Plaza de Ciencias, 3,  Ciudad Universitaria, 28040 MADRID} 
\email{maria.pe@mat.ucm.es}

\thanks{The first author is supported by ERCEA 615655 NMST Consolidator Grant, MINECO by the project 
reference MTM2016-76868-C2-1-P (UCM), by the Basque Government through the BERC 2018-2021 program and Gobierno Vasco Grant IT1094-16, by the Spanish Ministry of Science, Innovation and Universities: BCAM Severo Ochoa accreditation SEV-2017-0718. The second author is supported by a La Caixa Ph.D. grant associated to the MINECO project SEV-2011-0087, by ERCEA 615655 NMST Consolidator Grant, MINECO by the project reference MTM2016-76868-C2-1-P (UCM), by the Basque Government through the BERC 2018-2021 program and Gobierno Vasco Grant IT1094-16, by the Spanish Ministry of Science, Innovation and Universities: BCAM Severo Ochoa accreditation SEV-2017-0718. 
The third author is supported by MINECO by the project reference MTM 2017-89420 and MTM2016-76868-C2-1-P (UCM).}

\subjclass[2010]{Primary 14B05,32S05,32S50,55N35,51F99}
\begin{abstract}
We introduce a metric homotopy theory, which we call Moderately Discontinuous Homotopy, designed to capture Lipschitz properties of metric singular subanalytic germs. It matches with the Moderately Discontinuous Homology theory receantly developed by the authors and E. Sampaio. The $k$-th MD homotopy group is a group $MDH^b_\bullet$ for any $b\in [1,\infty]$ together with homomorphisms $MD\pi^b\to MD\pi^{b'}$ for any $b\geq b'$. We develop all its basic properties including finite presentation of the groups, long homology sequences of pairs, metric homotopy invariance, Seifert- van Kampen Theorem and the Hurewicz isomorphism Theorem. We prove comparison theorems that allow to relate the metric homotopy groups with topological homotopy groups of associated spaces. For $b=1$ it recovers the homotopy groups of the tangent cone for the outer metric and of the Gromov tangent cone for the inner one. In general, for $b=\infty$ the $MD$- homotopy recovers the homotopy of the punctured germ. Hence, our invariant can be seen as an algebraic invariant interpolating the homotopy from the germ to its tangent cone. We end the paper with a full computation of our invariant for any normal surface singularity for the inner metric. We also provide a full computation of the MD-Homology in the same case.
\end{abstract}

\maketitle
\tableofcontents

\section{Introduction}
We introduce a metric homotopy theory, which we call Moderately Discontinuous Homotopy, designed to capture Lipschitz properties of metric singular subanalytic germs.
This theory matches with the Moderately Discontinuous Homology theory receantly developed by the authors and E. Sampaio in \cite{MDH}. 
Both theories run parallel to the classical theories and are related through a theorem of type Hurewicz from the Moderately Discontinuous Homotopy groups to the MD Homology ones. 

With this metric algebraic topology we aim to provide algebraic and numerical invariants which capture Lipschitz phenomena of real and complex analytic singularities, and more generally, subanalytic sets, categories that we hope to  enlarge in the future. 

The object of study are pointed subanalytic germs, which have conical structure, endowed with a metric that has to be equivalent to a subanalytic one (such as the outer or the inner). A base point in a germ $(X,0)$ is what we call a point in these theories, that is a mapping from the interval $([0,1],0)$ to the germ $(X,O)$ preserving up to a constant the distance to the origin.

An $(n,b)$-loop in a metric germ $(X,0,d)$ is given by a possibly discontinuous map from the cone of the n-cube $I^n$ to $(X,O)$ where we allow discontinuities that measured with respect to the metric $d$ are no bigger than $t^b$ where $t$ is the distance to the origin of the germ. This type of mappings are new and we call them weak $b$-maps (see Definition \ref{def: weak b-maps}).

Then, for every $b\in (0,+\infty)$ we consider the set of $(n,b)$-loops up to weak $b$-homotopies (see definitions  \ref{def:weakhomotopy}), which admit also discontinuities up to order $t^b$. This set with the operation of concatenation is a group that we denote by $MD\pi_n^b(X,d)$, which is abelian for $n\geq 2$.  Moreover there are homomorphisms relating the $(n,b)$-MD Homotopy group with the $(n,b')$-MD Homotopy group.  The $b$-MD homotopy groups of a metric germ $(X,O,d)$ capture the homotopy nature of the germ up to gaps that measured in the metric $d$ are of size smaller than $t^b$.  

In the case of the outer and inner metric, we prove that all the groups $MD\pi_n^b$  are finitely generated  abelian groups. 
Moreover, only finitely many homomorphisms $MD\pi^b_\bullet\to MD\pi^{b'}_\bullet$ are essential. 

The MD homotopy groups  are a bi-Lipschitz subanalytic invariant. They are functorial from the category of metric subanalytic germs, both with lipschitz maps and $b$-maps (which are a kind of piecewise continuous mappings that were introduced in \cite{MDH}, see Definition \ref{def:bmaps}) as morphism. These groups are also invariant by suitable metric homotopies, and satisfy versions of the relative long exact sequence
 and they are independent of the base point for a $b$-connected germ (in the sense of \cite{MDH}, see Definition \ref{def:b-connected}). 
 
We prove a Seifert van Kampen type theorem for the $b$-MD fundamental groupoid for coverings good enough. That is, we get conditions so that it is the colimit of the $b$-MD fundamental groupoids of the elements of the covering as in the classical theorem (see Theorem \ref{theo:SvKGroupoids}). 
Seifert van Kampen Theorem is one of the delicate aspects of our theory. This is no surprise, since Mayer-Vietoris for $MD$-Homology was also delicate to formulate and prove. One subtlety lies in finding the appropriate notion of covering for which the result may hold. There are two conditions ($(*)_b$ and $(**)_b$) that have to be satisfied in order that Theorem \ref{theo:SvKGroupoids} is satisfied. Before we prove it we show a more general theorem  (see Theorem \ref{theo:1stuniversal}) that is satisfied only assuming condition $(*)_b$.

We prove comparison theorems, which relate the MD-homotopy groups of a germ with the outer metric, with usual homotopy group of limits of $b$-horn neighbourhood of it (see Theorem \ref{theo:maincomparacion}). This result has an analogous in the MD Homology theory that was conjetured by Lev Birbrair and proved in \cite{MDH}. Here it is not possible to adapt the proof of the corresponding result for homology, since it is based in Mayer-Vietoris sequences, that are not available in homotopy. Instead, we need to perform an interpolation procedure that "metrically homotopes" a discontinuous weak $b$-map to a continuous one (see the proof of Proposition \ref{prop:directlimit3} and its preparations for details.).

The comparison theorems have many important consequences for pointed pairs of metric germs with the inner or the outer metric: the Hurewicz isomorphims theorem is satisfied (see Theorem \ref{theo:hur} and its proof at the end of  Section \ref{sec:hurew}), the MD homotopy groups are finitely presented, given a pointed pair of metric subanalytic sets, the set of $b$'s such that the groups $MD\pi^{b+\epsilon}_k$ and $MD\pi^{b-\epsilon}_k$ differ for sufficiently small positive $\epsilon$ is finite and contained in $\QQ$, for $b=1$ and the outer metric $MD\pi^{1}_k$ recovers the $k$-th homotopy group of the punctured tangent cone, for the inner metric $MD\pi^{1}_k$ recovers the $k$-th homotopy group of the punctured Gromov tangent cone, and $MD\pi^{\infty}_k$ recovers the $k$-th homotopy group of the link (see Proposition~\ref{prop:MD homotopy at infinity} and Corollary \ref{cor:finitudes}). The $b=\infty$ comparison theorem (Proposition~\ref{prop:MD homotopy at infinity}) is easier and has a direct proof. Let us also comment that for the proof of this statements for the inner metric we need to use an adequate re-embedding of the germ that reduces the assertions to the outer metric case.

As a first example we compute the $MD$-homotopy for the $b$-cones, completing the computation also for the $MD$-homology started in \cite{MDH}. 

The inner metric of normal complex surface singularities is fully described in \cite{BirbrairNeumannPichon:2014}. In the last section we use this description and the metric version of the Seifert van Kampen theorem, Theorem \ref{theo:SvKGroupoids}, to compute all the $b$-MD fundamental groups of a complex singularity surface germ with the inner metric. Given a surface germ $(X,0)$ we give a $b$-\emph{homotopy model} $X^b_\epsilon$ which is a topological space whose homology and fundamental group is the $b$-MD Homology and $b$-MD fundamental group of $(X,0,d_{inn})$ (see Theorems \ref{theo:MDpi1inner}-\ref{theo:MDHSurfaces}). The space $X^b_\epsilon$ has the homotopy type of a plumbed $3$-manifold in which several circles are identified (a ``branched $3$-manifold'' in the language of~\cite{BirbrairNeumannPichon:2014}). Such a branched $3$-manifold has a natural description compatible with a JSJ decomposition of the link $X_\epsilon$.

It is well known that the fundamental group of the link of a normal surface singularity determines the topology of the singularity, except in the cyclic quotient case. This is a particular case of a theorem of Waldhausen. We end the paper with a list of open questions (See Problems~\ref{prob:1},~\ref{prob:2}), out of which the following two stand out:
\begin{itemize}
 \item Does the MD fundamental group of a normal complex surface singularity determine the inner geometry of the surface, in the non cyclic quotient case?
 \item Can the Lipschitz normally embedded property of a normal surface singularity be read from all the $b$-MD fundamental groups for the inner and the outer metric?
 
\end{itemize}

\section{Setting and notation}\label{sec:notation}
We recall the basic definitions that were introduced in \cite{MDH}, where the reader can find a more detailed exposition. We will always work with bounded subanalytic subsets, which in particular are globally subanalytic (see \cite{Dries:1986}). Recall that the collection of all globally subanalytic sets forms an O-minimal structure (see \cite{Dries:1986}). We use \cite{Dries:1998} and \cite{Coste:1999} as basic references in O-minimal geometry.

\begin{definition}\label{def:germ}
A \emph{metric subanalytic germ} $(X,x_0,d)$  is a subanalytic germ $(X,x_0)$  such that $x_0\in \overline{X}$ (where $\overline{X}$ denotes the closure of $X$ in $\RR^m$) and $d$ is a subanalytic metric that induces the same topology on $X$ as the restriction of the standard topology on $\RR^n$ does . 
We omit $x_0$ and $d$ in the notation when it is clear from the context. We say $x_0$ is the vertex of the germ.

Given two  germs $(X,x_0)$ and $(Y,y_0)$, a \emph{subanalytic map germ} $f:(X,x_0)\to (Y,y_0)$ is a subanalytic continuous map $f:X\to Y$ that admits a continuous and subanalytic extension to a map germ  $\overline{f}:(X\cup\{x_0\},x_0)\to (Y\cup\{y_0\},y_0)$. 
\end{definition}
   
We define in the expected manner metric subanaytic subgerms, pairs of metric subanalytic germs and subanalytic mappings between them,  etc. For more details, see section 2.2 in \cite{MDH}.

We recall that, as in \cite{MDH}, it is possible for a germ $(X,x_0)$,  according to our definition,  that $x_0\notin X$. Important examples of germs of these types are open subgerms $(U,0)$ of a subanalytic germ $(X,0)$. 

Also, notice that we are building a bilipschitz invariant, so, in practice metrics that are not subanalytic but are bilipchitz equivalent to subanalytic ones are allowed; for example the inner metric.  See \cite{MDH} for more details.

\begin{definition}
\label{def:lva}

A map germ  $f: (X, x_0) \rightarrow (Y, y_0)$ is said to be \emph{linearly vertex approaching} (l.v.a. for brevity) if there exists $K\geq 1$ such that $$\frac{1}{K}||x-x_0||\leq ||f(x)-y_0||\leq K||x-x_0||$$ for every $x$ in some representative of $(X,x_0)$. {The constant $K$ is called the l.v.a constant for $f$}.

Let $(X, x_0, d_X)$ and $(Y, y_0, d_Y)$ be two metric subanalytic germs.
 A \emph{Lipschitz linearly vertex approaching subanalytic map germ} $f: (X, x_0,d_X) \rightarrow (Y, y_0,d_Y)$ is a map germ that is both Lipschitz (with respect to the metrics $d_X$ and $d_Y$) and l.v.a.. In particular one can take the same constant $K$ for both properties. 
\end{definition}

Following the philosophy of~\cite{MDH}, the metric germ $((0,\epsilon),0,d)$, that is the interval germ  with the euclidean metric, plays the role of a point. This motivates the following definition, which fixes the category of pointed pairs of metric subanalytic germs (the source category of the Lipschitz homotopy functors introduced in this paper).

\begin{definition} 
 The \emph{category of pairs of metric subanalytic germs} has pairs of metric subanalytic germs as objects and Lipschitz l.v.a. subanalytic maps of pairs as morphisms.  
 
A {\em point in $X$} where $(X,x_0,d)$ is a metric subanalytic germ,  is a continuous l.v.a. subanalytic map germ $\frak{p}:(0,\epsilon) \to X$. A {\em pointed pair of metric subanalytic germs} is a pair $(X,Y,x_0,d,\frak{p})$ of metric subanalytic germs together with a point $\p$ in $Y$. Given pointed metric subanalytic germs $(Y,x_0,d,\frak{p})$ and $(Y',x'_0,d',\p')$, a morphism $f:(Y,x_0,d)\to (Y',x'_0,d')$ preserves the base point if $f\comp \p=\p'$. The \emph{category of pointed pairs of metric subanalytic germs} has pointed pairs of metric subanalytic germs as objects and Lipschitz l.v.a. subanalytic maps of pairs that preserves the base point as morphisms. 
 \end{definition}

As in~\cite{MDH} we can enlarge the category admitting morphisms that are $b$-maps, see Definition \ref{def:bmaps}.

The following definition is important: 

\begin{definition}\label{def:bpoint}[$b$-point]
Let $(X,x_0,d)$ be a metric subanalytic germ and let $b\in (0, +\infty)$. 
Two points $\p$ and $\q$ in $X$ are  {\em $b$-equivalent}, and we write $\p \sim_b \q$, if 
$$
\lim_{t \to 0} \frac{d_X(\p(t), \q(t))}{t^b} = 0.
$$

An equivalence class of points is called a \emph{$b$-point} of X. 
\end{definition}

\subsection{Useful terminology}
\label{sec:terminology}
In the development of metric homotopy theory we will use $n$-loops and homotopies modeled on cones over cubes. We introduce some related notation here: 
\begin{notation}\label{not:cone}
In this chapter $I := [0,1]$ denotes the unit interval.

We consider the cone over the cube $I^n$:
$$C(I^n)=\{(tx,t)\in \RR^{n}\times \RR;\, x\in I^n, \ t\in (0,+\infty )\}.
$$


For convenience the origin is not in $C(I^n)$. 

Note that the notion of l.v.a. maps from or into $C(I^n)$ can be thought using the parameter $t$ instead of the distance to the origin. 

We sometimes denote $(y_1,...,y_n) \in I^n$ by $y_{1 .. n}$ or also by $ (y_{1 .. n-1}, y_n)$ and similarly.

In a similar way we denote by $C(M)$ the  cone over a bounded subanalytic set $M\subset\RR^n$.

For readability, we write $(y, t)$ to denote $(yt, t)$ in $C(M)$.
 But be aware that this does only provide a system of coordinates out of the origin of $C(M)$.
\end{notation}

\begin{definition}[Normal point in $C(I^n)$]\label{def:normal point in C(I^n)}

Let $\q: (0, \epsilon) \to C(I^n)$ be a point in $C(I^n)$. 
We say that $\q$ is a \emph{normal point} if $\q(t)$ is expressed as $(\alpha(t), t)$ in the usual coordinates of $C(I^n)$. 

\end{definition} 

\begin{notation}\label{not:Bcategory} We respectively denote  by $\calS$,   $\calG$,  and  $\calG\calP$ the categories of sets, groups, and groupoids. 


We denote by $\BB$ the category  whose set of objects is $(0,\infty]$ and there is a unique morphism from $b$ to $b'$ if and only if $b\geq b'$.

The {\em category $\BB-\calS$, (resp. $\BB-\calG$, $\BB-\calG\calP$) of $\BB$-sets (resp. $\BB$-groups, $\BB$-groupoids}) is the category whose objects are functors from $\BB$ to $\calS$  (resp. $\calG$, $\calG\calP$) and the morphisms are natural transformations of functors.
\end{notation}

\subsection{Conical structures}

Given a subanalytic germ $(X,x_0)$, its link $C_X$ is the intersection of $X$ with a small sphere centered in $x_0$. Its subanalytic homeomorphism type is independent of the radius. 
\begin{definition}\label{def:con_struc} Given a subanalytic germ $(X,x_0)$ and a family of subanalytic subgerms $(Z_1,0)$,...,$(Z_k,0)\subseteq(X,0)$, a conical structure for $(X,0)$ compatible with the the family $\{Z_i \}$ is a subanalytic homeomorphism $h:  C(L_X)\to (X,x_0)$ such that $||x_0-h(x,t)||=t$ and such that $h(C(L_{Z_i}))=Z_i$ with $L_{Z_i}$ in $L_X$. 
\end{definition}

Conical structures always exist (see \cite{Coste:1999} Theorem 4.10). The following remark, proved in~\cite{MDH} will be used.

\begin{remark} 
\label{rem:lvareduction}
Let $(X,x_0)$ be a subanalytic germ with compact link. Consider any subanalytic map germ $f: (X, x_0) \rightarrow (Y, y_0)$ that is a homeomorphism onto its image. Let $\{Z_j\}_{j\in J}$ be a finite collection of closed subanalytic subsets of $X$. There is a subanalytic homeomorphism germ $\phi:(X,x_0)\to (X,x_0)$ such that $\phi(Z_j)=Z_j$ for all $j\in J$ and such that $||f\comp \phi(x)-y_0||=||x-x_0||$ (this is stronger than l.v.a.).
\end{remark}

\section{$b$-moderately discontinuous homotopy groups and their basic properties}\label{sec:def}

\subsection{Weak $b$-maps}

Weak $b$-maps are a way of weakening the continuity of loops and homotopies in the classical theory, in order to establish a parallel theory that captures metric phenomena. 

\begin{definition}[Weak $b$-map]\label{def: weak b-maps}
Let $(X, x_0, d_X)$ be a metric subanalytic germ.  
Let $b \in (0, \infty)$. A {\em weak $b$-moderately discontinuous subanalytic map (weak $b$-map, for abbreviation) from a subanalytic germ $(Z,0)$ to $(X, x_0, d)$} is a finite collection $\{(C_j,f_j)\}_{j\in J}$, where $\{C_j\}_{j\in J}$ is a finite closed subanalytic cover of $(Z, \underline{0})$ and $f_j: C_j \to X$ are continuous l.v.a. subanalytic maps for which for any point $\q$ in $C_{j_1} \cap C_{j_2}$  for any $j_1, j_2 \in J$ we have that $f_{j_1} \comp \q\sim_b f_{j_2}\comp \q$. We call $\{C_j\}_{j\in J}$ the {\em cover of the weak $b$-map $\{(C_j,f_j)\}_{j\in J}$}.
%

Two weak $b$-maps $\{(C_j,f_j)\}_{j\in J}$ and $\{(C'_k,f'_k)\}_{k\in K}$ from $(Z,0)$ to $(X,x_0,d)$ are called {\em $b$-equivalent}, if for any point $\q$ contained in the intersection $C_j \cap C'_k$ for any $j \in J$ and $k \in K$, we have that  $f_j\circ \q \sim_b f'_k \comp \q$.

We make an abuse of language and we also say that a weak $b$-map from $(Z,z_0)$ to $(X,x_0,d_X)$ is an equivalence class as above. 

For $b = \infty$, a weak $b$-map from $Z$ to $X$ is a continuous l.v.a. subanalytic map germ from $(Z,z_0)$ to $(X,x_0,d_X)$.
\end{definition}
Informally we can say that a weak $b$-map gives a well defined mapping from $Z$ to the set of $b$-points $X$ (see Definition \ref{def:bpoint}). 

\begin{remark}[Gluing of weak $b$-maps]\label{rem:glueweak}
Two weak $b$-maps $\varphi_1$ and $\varphi_2$ defined on $Z_1$ and $Z_2$ respectively glue to a  weak $b$-map defined on $Z_1\cup Z_2$ if and only if for any point $\q$ in $Z_1 \cap Z_2$ we have the equivalence $\varphi_1 \comp \q\sim_b\varphi_2 \comp \q$. 
\end{remark}

\begin{remark}[Equivalence by refinement]\label{rem:refineweak}
Let $\varphi = \{(C_j, f_j)\}_{j \in J}$ be a weak $b$-map  and $\{D_k\}_{k \in K}$ a refinement of $\{C_j\}_{j\in J}$. For $k \in K$, let $r(k) \in J$ be such that $D_k \subseteq C_{r(k)}$. Then $\{(D_k, f_{r(k)}|_{D_k})\}_{k \in K}$ is equivalent to $\varphi$. As a consequence any weak $b$-map from $Z$ to $X$ has a representative $\{(C_j,f_j)\}_{j\in J}$, for which the interior of $C_{j_1} \cap C_{j_2}$ is empty for any $j_1, j_2 \in J$.

\end{remark}


\begin{remark}\label{rem:b'weak}
If  $b\geq b'$ then any weak $b$-map is also a weak $b'$-map. 
\end{remark}

We recall the  definition of $b$-maps, with respect to which the $b$-MD-Homology groups are functorial (see Section 5 in \cite{MDH}). We will also prove functoriality of the MD-Homotopy groups for this type of morphisms. In particular, we can compose a weak $b$-map with a $b$-map on the right, as we see in Definition \ref{def:comp_weak} below.

\begin{definition}[Category pointed of metric pairs with $b$-maps]
\label{def:bmaps}
Let $(X, x_0, d_X)$ and $(Y, y_0, d_Y)$ be metric subanalytic germs, $b \in (0, +\infty)$. A \em $b$-moderately discontinuous subanalytic map (a $b$-map, for abbreviation) from $(X, x_0, d_X)$ to  $(Y, y_0, d_Y)$ is a finite collection $\{(C_i,f_i)\}_{i\in I}$, where $\{C_i\}_{i\in I}$ is a finite closed subanalytic  cover of $X$  and the $f_i: C_i \to Y$ are l.v.a. subanalytic maps satisfying the following: for any $b$-equivalent pair of points $\p$ and $\q$ contained in $C_i$ and $C_j$ respectively ($i$ and $j$ may be equal), the points $f_i\circ \p$ and $f_j\circ \q $ are b-equivalent in $Y$.

Two $b$-maps $\{(C_i,f_i)\}_{i\in I}$ and $\{(C'_i,f'_i)\}_{i\in I'}$ are called {\em $b$-equivalent} if for any $b$-equivalent pair of points $\p$, $\q$ with Im$(\p) \subseteq C_i$ and Im$(\q) \subseteq C'_{i'}$, the points $f_i\circ \p$ and $f'_{i'}\circ \q$ are b-equivalent in $Y$.

We make an abuse of language and we also say that a $b$-map from $(X,x_0,d_X)$ to $(Y,y_0,d_Y)$ is an equivalence class as above. 

For $b = \infty$, a $b$-map from $X$ to $Y$ is a l.v.a. subanalytic map from $X$ to $Y$.

A $b$-map between pointed pairs of metric subanalytic germs $(X,Y,x_0,d, \p)$ and $(\widetilde{X}, \widetilde{Y},\widetilde{x}_0, \widetilde{d}, \widetilde{\p})$ is a $b$-map from $X$ to $\widetilde{X}$ admitting a representative $\{(C_i, f_i )\}_{i \in I}$  for which 
\begin{enumerate}
 \item for any point $\q$ of $Y\cap C_i$, the point $f_i\comp \p$ is $b$-equivalent to a point in $\widetilde{Y}$,
 \item if the image of $\p$ is in $C_i$ then $f_i\comp \p\sim_b\widetilde{\p}$.
\end{enumerate}
 
The {\em category of pointed metric pairs with $b$-maps} has as objects pointed pairs of metric subanalytic germs and as morphisms the $b$-maps between them.
\end{definition}

Informally, we can say that $b$-maps give well defined mappings from the set of $b$-points of $X$ to the set of $b$-points of $Y$.

Note that the analogue for $b$-maps of Remarks \ref{rem:glueweak} and \ref{rem:b'weak} for weak $b$-maps are not satisifed (unless the target of the $b$-map is a convex set; see \cite{thesis} for more details).

\begin{definition}[Composition of weak $b$-maps and $b$-maps]\label{def:comp_weak}
Let $Z$ and $Z'$ be subanalytic germs and let $(X,x_0,d)$ and $(X',x_0,d')$ be metric subanalytic germs.  Let $\varphi = \{(C_j, f_j)\}_{j \in J}$ be a weak $b$-map from $Z$ to $X$. 

For a continuous l.v.a. subanalytic map $\phi$ from $Z'$ to $Z$, we can define $\varphi \comp \phi$ to be the weak $b$-map $\{(\phi^{-1}(C_j),\varphi_j \comp \phi )\}_{j \in J}$ from $Z'$ to $X$. 

Let $\psi = (D_k, g_k)_{k \in K}$ be a $b$-map from $X$ to $X'$. We define $\psi \comp \varphi$ to be the weak $b$-map $\{ (f_j^{-1}(D_k) \cap C_j, g_k \circ f_{j| f_j^{-1}(D_k) \cap C_j})\}_{(j,k) \in J \times K}$ from $Z$ to $X'$. 
\end{definition}

\subsection{Definition of the $b$-moderately discontinuous metric homotopy groups}
\label{subsection:defMDhomology}

\begin{definition}[$(n,b)$-loop]\label{def:bn-loop}
Let $(X, Y, x_0, d, \p)$ be a pointed pair of metric subanalytic germs,  $n\in\mathbb{N}$ and $b \in (0, \infty]$. A {\em $b$-moderately discontinuous $n$-loop (a ($(n,b)$-loop, for short)} is a weak $b$-map $\varphi$ from $C(I^n)$ to $(X,Y)$ for which the following boundary conditions hold: 
\begin{enumerate}[(a)]
 \item for any point $\q$ in $C(\partial I^n)$, the point $\varphi \comp \q$ is $b$-equivalent to a point in $Y$.
 \item for any normal point $\q$ in $C(\partial I^n\setminus (I^{n-1}\times\{1\}))$, we have $\varphi \comp \q\sim_b\p$.
\end{enumerate}

We denote the set of all $(n,b)$-loops in $(X, Y, \p)$ by $MD\Gamma_n^b(X,Y,\p)$. Observe that we suppress $x_0$ and $d$ in the notation $MD\Gamma_n^b(X,Y,\p)$, even though they influence the set of $(n,b)$-loops in $(X, Y, \p)$.
In the case $Y$ coincides with the image of $\p$, we simply write $MD\Gamma_n^b(X,\p)$
\end{definition}

\begin{notation}\label{not:maps} Denote the inclusions 
$\iota_s:C(I^n)\to C(I^{n+1})$ defined as $\iota_s(y,t):=((y,s),t)$ for any $s\in [0,1]$, and the projection $\rho:C(I^{n+1}) \to C(I^n)$ defined by $\rho(y_{1..n+1},t):= (y_{1..n}, t)$. 
\end{notation}

\begin{definition}[Weak $b$-homotopy (relative to $W$)]\label{def:weakhomotopy}
Let $(X, x_0, d)$ be a metric subanalytic germ. Let $\varphi_0$, $\varphi_1$ be weak $b$-maps from $C(I^n)$ to $X$ . A {\em weak $b$-homotopy from  $\varphi_0$ to $\varphi_1$} is a weak $b$-map $H$ from $C(I^{n+1})$ to $X$ such that $H\circ \iota_0=\varphi_0$ and $H\circ \iota_1=\varphi_1$ where $\iota_s$ denotes the inclusion of $C(I^n)$ into $C(I^{n+1})$ given by $(y,t) \to ((y,s),t)$. 

We say $H$ is a weak $b$-homotopy \emph{relative to a subgerm}  $(W, \underline{0}) \subseteq (C(I^n), \underline{0})$ if  moreover 
$$H\comp \q\sim_b\varphi_0 \comp \rho  \comp  \q$$
for any point $\q$ in $\rho^{-1}(W)$.


In case $\varphi_0$, $\varphi_1$ 
satisfy that for any point $\p$ in 
 a subanalytic germ $K$ of $C(I^n)$ the points $\varphi_0\circ \p$ and $\varphi_1\circ \p$ are  $b$-equivalent to  points in certain subgerm $(Y,x_0)$ of $(X,x_0)$, and for any point $\q$ in $\rho^{-1}(K)$  the point $H \comp \q$ is $b$-equivalent to a point in $Y$ 
then we say that 
$H$ \em{preserves the inclusion of $K$ in $Y$}. 

 
We say that $\varphi_0$ and $\varphi_1$ are {\em weakly $b$-homotopically equivalent} or \em{weak $b$-homotopic} (\em{relative to $W$} or preserving the inclusion of $K$ in $Y$ if it applies). 
 \end{definition}
 


\begin{figure}\label{fig:weak_b_homo}
\includegraphics[scale=0.15]{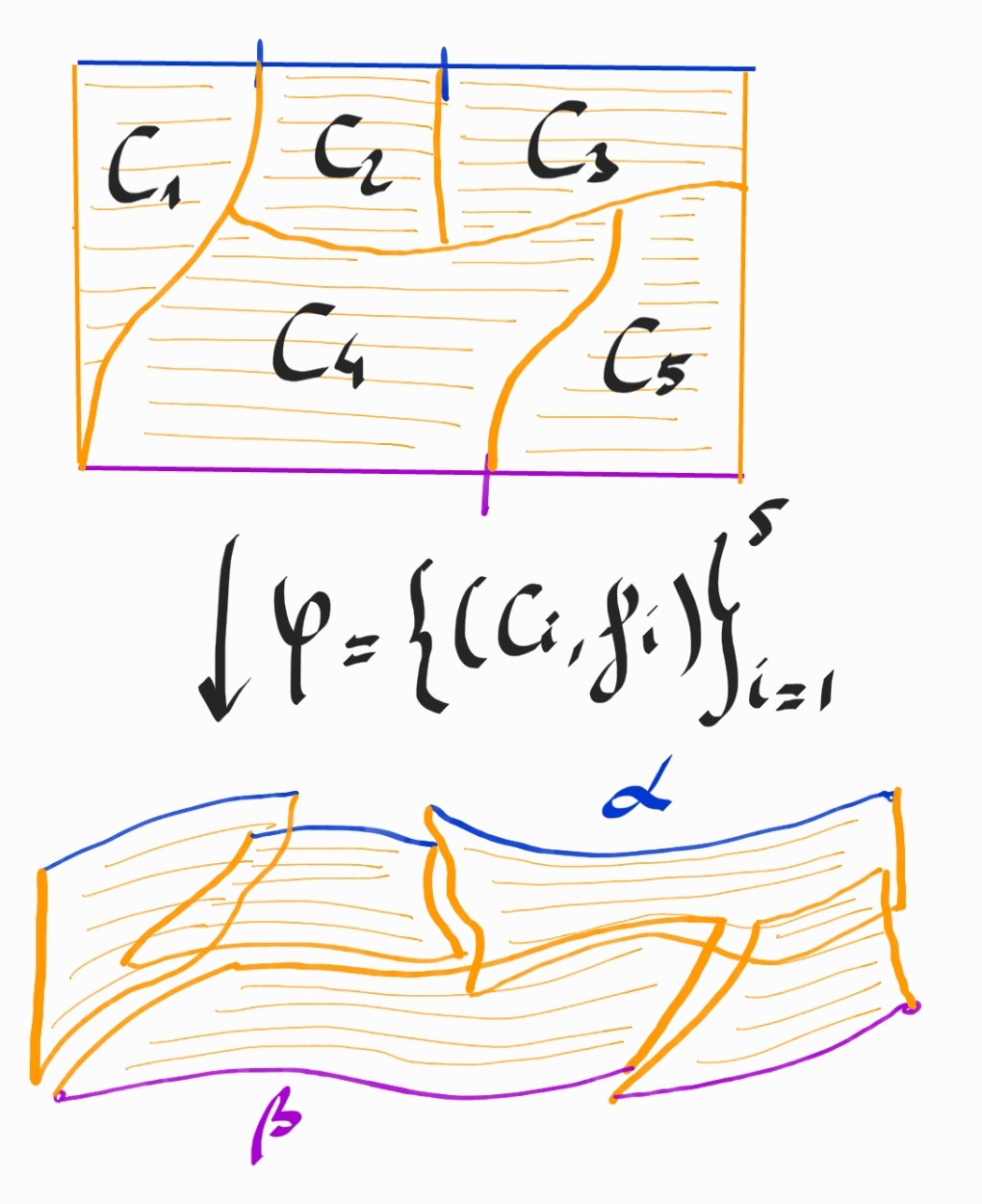}
\caption{Squematic example of a weak $b$-homotopy joining the weak $b$-maps $\alpha$ and $\beta$ in blue and purple. }
\end{figure}


\begin{remark}\label{rem:loopsbound}
Let $\varphi_0$ and $\varphi_1$ be weak $b$-maps from $C(I^n)$ to $X$. Let $(W, 0) \subseteq (C(I^n), 0)$ be a subgerm. There is a necessary condition for $\varphi_0$ and $\varphi_1$ to admit a weak $b$-homotopy relative to $W$ between them: for any point $\q$ in $W$,  the points $\varphi_0 \comp \q$ and $\varphi_1 \comp \q$ are $b$-equivalent.

In particular any two $(n,b)$-loops $\varphi_1$ and $\varphi_2$ in $(X, Y, \p)$ fulfill the necessary condition to admit a weak $b$-homotopy relative to $ W = C(\partial I^n\setminus I^{n-1}\times\{1\})$ between them. Moreover, if $H$ is such weak $b$-homotopy, then $H \comp \iota_s = \varphi_s$ is a $b$-moderately discontinuous $n$-loop for any $s\in [0,1]$.
\end{remark}
\begin{proof}
We prove the last statement. Let $\q$ be a point in $C(\partial I^n\setminus I^{n-1}\times\{1\})$. By Remark~\ref{rem:lvareduction}, there is a subanalytic homeomorphism $h:(0, \epsilon) \to (0, \epsilon)$, for which $\widetilde{\q} := \q \comp h$ is a normal point. Since both $\q$ and $\widetilde{\q}$ are l.v.a., the homeomorphism $h$ is also l.v.a.. Therefore, the $b$-equivalence between $\varphi_1 \comp \widetilde{q}$ and $\varphi_2 \comp \widetilde{q}$ implies the $b$-equivalence between $\varphi_1 \comp \q$ and $\varphi_2 \comp \q$.
\end{proof}

Let us see some easy ways of getting weak $b$-homotopies between weak $b$-maps: 


\begin{example}\label{rem:topological isotopy}
Let $\eta : C(I^n) \times I \to C(I^n)$ be a subanalytic continuous  homotopy with  $\eta_0 = id_{C(I^n)}$ and satisfying that 
there is a $K\geq 1$ such that for any $s\in I$, $\eta_s:=\eta(\_ ,s)$ is l.v.a. for the constant  $K$. 
We define $\hat{\eta} : C(I^{n+1}) \to C(I^n)$ by the formula $\hat{\eta}(y_{1..n+1}, t) := \eta((y_{1..n}, t), y_{n+1})$. Then $\varphi \comp \hat{\eta}$ defines a weak $b$-homotopy from $\varphi$ to $\varphi \comp \eta_1$.
\end{example}

We are going to define concatenations of weak $b$-maps and equip $MD\pi_n^b(X,\p)$ with a product operation, endowing it with a group structure in most cases.

A concatenation of $(n,b)$-loops $\varphi_0$ and $\varphi_1$ is defined, similarly to the classical case, by gluing them along the faces $C(\{1\}\times I^{n-1})$ and $C(\{0\}\times I^{n-1})$. 

We will  define concatenation for weak $b$-maps that are not necessarily $(n,b)$-loops. The following auxiliary mappings will be used: 
\begin{notation}

Let $n\in \mathbb{N}$ and let $0 \leq \alpha_1 \leq \alpha_2 \leq 1$ and $0 \leq \alpha'_1 < \alpha'_2 \leq 1$. Then, $\phi_{\alpha_1, \alpha_2}^{\alpha'_1, \alpha'_2}$ denotes the continuous subanalytic l.v.a. homeomorphism from $C([\alpha'_1, \alpha'_2] \times I^{n-1})$ to $C([\alpha_1, \alpha_2] \times I^{n-1})$ that linearly transforms the former into the latter. This is defined by the formula
$$
\phi_{\alpha_1, \alpha_2}^{\alpha'_1, \alpha'_2}(y_{1..n}, t) := (( \alpha_2 - \frac{\alpha_2 - \alpha_1}{\alpha'_2 - \alpha'_1}(\alpha'_2 - y_1), y_{2..n}), t )
$$
We suppress $n$ in the notation. When $\alpha'_1=0$ and $\alpha'_2 = 1$, we simply write  $\phi_{\alpha_1, \alpha_2}$.
\end{notation}

\begin{remark}\label{rem:phiconcat}
Let $n\in \mathbb{N}$, $0 \leq \alpha_1 < \alpha_2 \leq 1$ and $0 \leq \beta_1 < \beta_2 \leq 1$. Then we have $ \phi_{\beta_1, \beta_2} \comp \phi_{\alpha_1, \alpha_2} = \phi_{\gamma_1, \gamma_2}$, where $\gamma_1 = \alpha_1 (\beta_2 - \beta_1) + \beta_1$ and $\gamma_2 = \alpha_2 (\beta_2 - \beta_1) + \beta_1 $.
\end{remark}



\begin{definition}
[Concatenation]\label{def: concatenation}
Let $(X,x_0, d)$ be a metric subanalytic germ. Let $\varphi_0$ and $\varphi_1$ be weak $b$-maps from $C(I^n)$ to $X$. 
Assume that for any point $\q$ in $C(I^{n-1})$ we have that \begin{equation}\label{eq:comp}\varphi_0\circ \nu_1\circ \q \sim_b \varphi_1 \circ \nu_0\circ \q\end{equation}
where $\nu_s:C(I^{n-1})\to C(I^n)$ are the inclussion defined by $\nu_s(y,t)=(s,y,t)$. By 
Remark~\ref{rem:glueweak}, $\varphi_0 \comp (\phi_{0, \frac{1}{2}})^{-1} $
and $\varphi_1 \comp (\phi_{\frac{1}{2}, 1})^{-1}$ glue to a weak $b$-map on $C(I^n)$, which we call the {\em concatenation of $\varphi_0$ and $\varphi_1$}. We denote it by $\varphi_0 \cdot \varphi_1$.

\end{definition}

Then, one can always concatenate $(n,b)$-loops in $MD\Gamma_n^b(X,Y,\p)$ if $n>1$, and in $MD\Gamma_n^b(X,\p)$ if $n\geq 1$. Since weak $b$-homotopies between $(n,b)$-loops can be similarly concatenated, we get that concatenation is well defined up to weak $b$-homotopies preserving the inclusions of $C(\partial I^n)$ into $Y$ and relative to $C(\partial I^n\setminus I^{n-1}\times\{1\})$.
In particular we have a group structure on the equivalence classes of $(n,b)$-loops up to homotopy whenever concatenation is possible as above. See   Proposition \ref{prop:nb-Pi} for the final statement.

\begin{notation}\label{not:const_loop}[Constant loop]
Let $(X, x_0, d, \p)$ be a pointed metric subanalytic germ and $n\in\mathbb{N}$. We denote by $c_{\p, n}$ the weak $b$-map from $C(I^n)$ to $X$ defined by $c_{\p, n}(y,t) = \p(t)$.
\end{notation}

\begin{notation}\label{not:inv}[Inverse loop]
Let $\varphi=\{(C_j,f_j)\}$ be a weak $b$-map from $C(I^n)$ to $X$. We denote the weak $b$-map $\{(\overleftarrow{C_j}, \overleftarrow{f_j})\}$ by $\overleftarrow{\varphi}$, where $\overleftarrow{C_j}$ and $\overleftarrow{f_j}$ are the result of mirroring $C_j$ and $f_j$ respectively at the $y_1$-axis:
\begin{align*}
&\overleftarrow{C_j} := \{(y_{1..n}, t) \in C(I^n): (1-y_1, y_{2..n}, t) \in C_j\},\\
& \overleftarrow{f_j}(y_{1..n}, t) := f_j(1-y_1, y_{2..n}, t)
\end{align*}
The notation $\varphi^{-a}$ stands for the result of concatenating $\overleftarrow{\varphi}$ with itself $a$ times.
\end{notation}

\begin{prop}[Definition of the (n,b)-MD homotopy groups $MD\pi_n^b(X,Y,\p)$]\label{prop:nb-Pi}
Let $(X,Y, x_0, d,\p)$ be a pointed pair of metric subanalytic germs. We denote by $MD\pi_n^b(X,Y, \p)$ the quotient of $MD\Gamma_n^b(X,Y,\p)$ by weak $b$-homotopies  that preserve the inclusion of $\partial I^{n}$ into $Y$ and are relative to $C(\partial I^n\setminus (I^{n-1}\times\{1\}))$. 
We denote by $[\varphi]$ the equivalence class in $MD\pi_n^b(X,Y, \p)$ of an element $\varphi \in MD\Gamma_n^b(X,Y,\p)$. We call it the {\em $b$-homotopy class of $\varphi$}. 

For $n>1$ concatenation of $(n,b)$-loops is possible as defined in Definition~\ref{def: concatenation}. It induces a well defined operation of weak $b$-homotopy classes as  $[\varphi_1 ]\cdot [\varphi_2]:=[\varphi_1 \cdot \varphi_2]$ that defines a group structure on $MD\pi_n^b(X,Y,\p)$. We call it the $(n,b)$-homotopy group of  $(X,Y,x_0,d,\p)$.  

For $n=1$, concatenation of $(1,b)$-loops in $MD\Gamma_1^b(X,\p)$ is always possible and it induces a group structure in $MD\pi_1^b(X,\p)$. We call $MD\pi_1^b(X,\p)$ the {\em $b$-MD fundamental group of $(X,\p)$}.

In these groups, the neutral element is $[c_{p, n}]$ and the inverse is $[\varphi]^{-1}=[\overleftarrow{\varphi}]$. 

If $n\geq 3$ the group $MD\pi_n^b(X,Y,\p)$ is abelian. If $n\geq 2$ the group $MD\pi_n^b(X,\p)$ is abelian.

\end{prop}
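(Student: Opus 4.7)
The proposition bundles together several statements: well-definedness of concatenation on classes, the group axioms, and commutativity in the appropriate range. My plan is to import the classical arguments of algebraic topology and check at each step that they survive in the moderately discontinuous setting. The key observation that makes this possible is Example~\ref{rem:topological isotopy}: a continuous subanalytic isotopy $\eta$ of $C(I^n)$ with a uniform l.v.a. constant lifts, via the formula $\hat\eta(y_{1..n+1},t) = \eta((y_{1..n},t), y_{n+1})$, to a continuous subanalytic l.v.a. map $\hat\eta:C(I^{n+1})\to C(I^n)$, and then precomposition $\varphi \circ \hat\eta$ is (by Definition~\ref{def:comp_weak}) again a weak $b$-map and yields a weak $b$-homotopy from $\varphi$ to $\varphi\circ\eta_1$. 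All the classical reparametrization homotopies used for the group axioms are of this form.

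First, I would check that concatenation of $(n,b)$-loops is legitimate in the stated ranges. Condition \eqref{eq:comp} of Definition~\ref{def: concatenation} is that $\varphi_0\circ \nu_1\circ\q\sim_b \varphi_1\circ\nu_0\circ\q$ for any point $\q$ in $C(I^{n-1})$. The faces $\nu_0(C(I^{n-1}))$ and $\nu_1(C(I^{n-1}))$ lie in $C(\partial I^n\setminus(I^{n-1}\times\{1\}))$ whenever $n>1$ (for the relative case) and whenever $n\geq 1$ (if $Y=\mathrm{Im}(\p)$, after combining condition (b) of Definition~\ref{def:bn-loop} with a normal-point reduction as in the proof of Remark~\ref{rem:loopsbound}); in each case both images are $b$-equivalent to $\p$, so concatenation via Remark~\ref{rem:glueweak} produces a weak $b$-map whose boundary behavior is inherited from the two factors and hence is again an $(n,b)$-loop. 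Next, if $H_0$ and $H_1$ are weak $b$-homotopies of $(n,b)$-loops with the required preservation properties, the same face-matching argument shows that $(H_0\circ(\phi_{0,1/2}\times\mathrm{id})^{-1})$ and $(H_1\circ(\phi_{1/2,1}\times\mathrm{id})^{-1})$ glue, via Remark~\ref{rem:glueweak}, into a weak $b$-homotopy between $\varphi_0\cdot\varphi_1$ and $\varphi_0'\cdot\varphi_1'$. So $[\varphi_0]\cdot[\varphi_1]:=[\varphi_0\cdot\varphi_1]$ is well defined.

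For the group axioms I would use the standard reparametrization isotopies of $C(I^n)$, all of which are continuous, subanalytic, and l.v.a. with uniform constant (indeed, piecewise linear on the first coordinate and identity in the time direction), hence fall under Example~\ref{rem:topological isotopy}.
\begin{itemize}
\item Associativity: the isotopy that linearly rescales the triple subdivision $[0,1/4],[1/4,1/2],[1/2,1]$ to $[0,1/2],[1/2,3/4],[3/4,1]$ (using Remark~\ref{rem:phiconcat} to identify the rescaled pieces with the components of the two associated concatenations) supplies a weak $b$-homotopy $(\varphi_0\cdot\varphi_1)\cdot\varphi_2\sim\varphi_0\cdot(\varphi_1\cdot\varphi_2)$.
\item Identity: the isotopy $\phi_{0,s}$ applied on the first factor of a concatenation with $c_{\p,n}$ deforms $\varphi\cdot c_{\p,n}$ to $\varphi$; on the constant half the homotopy is constantly $\p$, so it respects the boundary conditions.
\item Inverse: the linear pinching map $(y_1,\ldots,y_n,t,s)\mapsto((2(1-s)y_1,y_{2..n}),t)$ on $[0,1/2]$ and its mirror on $[1/2,1]$ define a continuous subanalytic l.v.a. map $\hat\eta$ on $C(I^{n+1})$ which, applied to $\varphi\cdot\overleftarrow{\varphi}$, contracts it to $c_{\p,n}$; the required boundary conditions hold because the image stays in $\mathrm{Im}(\varphi)$ whose boundary behavior is controlled.
\end{itemize}
In each case the resulting weak $b$-homotopy automatically preserves the inclusion of $C(\partial I^n)$ into $Y$ and is relative to $C(\partial I^n\setminus I^{n-1}\times\{1\})$ because the underlying isotopy is the identity there.

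Finally, for commutativity in the appropriate range, I would run the Eckmann–Hilton type argument: when $n\geq 2$ in the absolute case (or $n\geq 3$ in the relative case), there are at least two free cube coordinates along which one can concatenate, and using the reparametrization isotopies above on both coordinates separately one realizes the usual $2\times 2$ rearrangement that produces the homotopy $\varphi_0\cdot\varphi_1\sim\varphi_1\cdot\varphi_0$. The main point to verify is that the intermediate weak $b$-maps (where constant halves $c_{\p,n}$ are inserted to free up space) have the correct boundary behavior, which is immediate since $c_{\p,n}$ satisfies conditions (a) and (b) of Definition~\ref{def:bn-loop} trivially. The main obstacle throughout is bookkeeping: one must check at every step that the reparametrizations respect both boundary conditions of Definition~\ref{def:bn-loop} and the relativeness required in the definition of $MD\pi_n^b$. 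Once it is established that the reparametrizations are all continuous subanalytic l.v.a. isotopies that are the identity on $C(\partial I^n\setminus I^{n-1}\times\{1\})$, this is automatic, so the real work is confined to spelling out the piecewise linear formulas and confirming their uniform l.v.a. constants.
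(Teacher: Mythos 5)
The paper's own proof is a one-line deferral to "routine checking" and the thesis, so your proposal is, in spirit, exactly what the paper has in mind: use Example~\ref{rem:topological isotopy} to import each classical reparametrization homotopy as a weak $b$-homotopy, and check boundary behavior case by case. Your decomposition of the argument (well-definedness, associativity, unit, inverse, Eckmann--Hilton) and your accounting of the ranges ($n>1$ relative, $n\geq 1$ absolute for concatenation; $n\geq 3$ relative, $n\geq 2$ absolute for commutativity) are all correct.

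One concrete error: the formula you give for the inverse, $\hat\eta(y_{1..n},t,s)=((2(1-s)y_1,y_{2..n}),t)$ on the half $y_1\in[0,1/2]$ (with mirror on $[1/2,1]$), does not satisfy the hypothesis $\eta_0=\mathrm{id}_{C(I^n)}$ of Example~\ref{rem:topological isotopy}: at $s=0$ it sends $y_1$ to $2y_1$, not to $y_1$, so it is not a homotopy starting from $\varphi\cdot\overleftarrow{\varphi}$. The correct reparametrizing homotopy to precompose with $\varphi\cdot\overleftarrow{\varphi}$ is the classical pinch $\eta_s(y_{1..n},t):=(\min(y_1,(1-s)/2),y_{2..n},t)$ for $y_1\le 1/2$ and its mirror image for $y_1\ge 1/2$; this is a (non-injective) continuous subanalytic l.v.a. homotopy with $\eta_0=\mathrm{id}$, $\eta_1$ collapsing $[0,1/2]$ to $\{0\}$ and $[1/2,1]$ to $\{1\}$, which fixes $y_1\in\{0,1\}$ pointwise and hence gives a weak $b$-homotopy relative to $C(\partial I^n\setminus I^{n-1}\times\{1\})$ from $\varphi\cdot\overleftarrow{\varphi}$ to $c_{\p,n}$ (this is precisely what Lemma~\ref{lemma: existence inverse element} records). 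Apart from this formula, the argument is sound.
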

\begin{proof}
The proof consist in a routine checking that the corresponding homotopies can be concatenated and the usual arguments in the topological case extended to weak $b$-maps. See~\cite{thesis} for details.
\end{proof}




The proof of the equality $[\varphi]^{-1}=[\overleftarrow{\varphi}]$ yields the following slightly more general statement that will be used:  

\begin{lema}\label{lemma: existence inverse element}
Let $\varphi = \{(C_j, f_j)\}_{j \in J}$ be a weak $b$-map from $C(I)$ to $X$. Consider $\q_0 := \varphi|_{C\{0\}}$ and $\q_1 := \varphi|_{C\{1\}}$ and the associated constant loops  $C_{\q_0,1}$ and $C_{\q_1,1}$. Then there is a weak $b$-homotopy relative to $C(\partial I)$ from $\varphi \cdot \overleftarrow{\varphi}$ to $C_{\q_0,1}$ and from $\overleftarrow{\varphi} \cdot \varphi$ to $C_{\q_1,1}$.
\end{lema}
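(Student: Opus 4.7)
The plan is to give the classical contraction explicitly, realized as a precomposition with an auxiliary continuous l.v.a.\ subanalytic map so that Definition~\ref{def:comp_weak} upgrades it to a weak $b$-map without any further checks on the $b$-moderately discontinuous side.

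First, define the tent function $\tau:I\to I$ by $\tau(y_1):=2y_1$ for $y_1\leq 1/2$ and $\tau(y_1):=2-2y_1$ for $y_1\geq 1/2$, and set
\[
 \psi(y_1,y_2):=(1-y_2)\,\tau(y_1).
\]
Using the coordinate convention of Notation~\ref{not:cone}, let $\Phi:C(I^2)\to C(I)$ be given by $\Phi(y_1,y_2,t):=(\psi(y_1,y_2),t)$. The map $\Phi$ is piecewise linear, hence continuous and subanalytic, and since $\psi$ takes values in $[0,1]$, one has $\|\Phi(y_1,y_2,t)\|\in[t,\sqrt{2}\,t]$, so $\Phi$ is l.v.a. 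Define $H:=\varphi\comp\Phi$, which, by Definition~\ref{def:comp_weak}, is a weak $b$-map from $C(I^2)$ to $X$ with cover $\{\Phi^{-1}(C_j)\}_{j\in J}$ and maps $f_j\comp\Phi|_{\Phi^{-1}(C_j)}$.

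Next, I verify the boundary and relative conditions by direct substitution into $\psi$. At $y_2=0$, $\psi(y_1,0)=\tau(y_1)$, and a straightforward comparison with the concatenation formula (using $\phi_{0,1/2}^{-1}(y_1)=2y_1$ and $\phi_{1/2,1}^{-1}(y_1)=2y_1-1$, together with the definition of $\overleftarrow{\varphi}$) yields $H\comp\iota_0=\varphi\cdot\overleftarrow{\varphi}$ as weak $b$-maps. At $y_2=1$, $\psi(y_1,1)=0$, so $H\comp\iota_1(y_1,t)=\varphi(0,t)=\q_0(t)=c_{\q_0,1}(y_1,t)$. Finally, for any point $\q$ in $\rho^{-1}(C(\partial I))$, its image lies in $\{y_1\in\{0,1\}\}$; since $\psi(0,y_2)=\psi(1,y_2)=0$, one has $H\comp\q(t)=\varphi(0,T(t))$ in whichever parameter $T(t)$ the point uses, and the same expression is obtained for $(\varphi\cdot\overleftarrow{\varphi})\comp\rho\comp\q(t)$. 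Hence equality (a fortiori $b$-equivalence) holds, establishing that $H$ is a weak $b$-homotopy relative to $C(\partial I)$.

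For the second statement, I run the same construction with $\psi':=1-\psi$, i.e.\ $\Phi'(y_1,y_2,t):=(1-\psi(y_1,y_2),t)$. The map $\Phi'$ is again continuous, subanalytic and l.v.a., so $H':=\varphi\comp\Phi'$ is a weak $b$-map. At $y_2=0$ one has $\psi'(y_1,0)=1-\tau(y_1)$, which matches $\overleftarrow{\varphi}\cdot\varphi$ piecewise on $y_1\leq 1/2$ and $y_1\geq 1/2$; at $y_2=1$, $\psi'(y_1,1)=1$, so $H'\comp\iota_1=c_{\q_1,1}$; and on $\{y_1\in\{0,1\}\}$ both $H'$ and $\overleftarrow{\varphi}\cdot\varphi$ reduce to $\varphi(1,t)=\q_1(t)$. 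I do not expect any genuine obstacle: the only point requiring a moment of care is that the composition $\varphi\comp\Phi$ be a bona fide weak $b$-map, and this is exactly what Definition~\ref{def:comp_weak} guarantees, once $\Phi$ has been checked to be continuous, subanalytic and l.v.a.; everything else reduces to identities between explicit piecewise-linear formulas.
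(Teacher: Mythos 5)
Your proof is correct. The paper does not actually write out a proof of this lemma: it states that it follows from the proof of the equality $[\varphi]^{-1}=[\overleftarrow{\varphi}]$ in Proposition~\ref{prop:nb-Pi}, whose proof is in turn deferred to~\cite{thesis}. Your route — precomposing $\varphi$ with one explicit continuous, subanalytic, l.v.a.\ self-map $\Phi$ of the cone and invoking Definition~\ref{def:comp_weak} so that $\varphi\comp\Phi$ is automatically a weak $b$-map — is exactly the adaptation of the classical piecewise-linear ``tent contraction'' the paper has in mind, and it is the cleanest way to argue here: all moderately discontinuous issues are absorbed into the single citation of Definition~\ref{def:comp_weak}, and the boundary and relative conditions become algebraic identities. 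The checks at $y_2=0$ against $\phi_{0,1/2}^{-1}$, $\phi_{1/2,1}^{-1}$ and $\overleftarrow{\varphi}$, at $y_2=1$, and on $\rho^{-1}(C(\partial I))$ are all accurate, as is the symmetric second construction with $\psi'=1-\psi$.

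One small wording slip: $\psi(y_1,y_2)=(1-y_2)\tau(y_1)$ is piecewise \emph{bilinear} in the cone coordinates (and in the ambient Euclidean coordinates of $C(I^2)$ the formula even involves terms of the form $x_1x_2/t$), so $\Phi$ is not piecewise linear. What you actually need and do correctly establish is that $\Phi$ is continuous, subanalytic and l.v.a.\ (with $\|\Phi(y_1,y_2,t)\|\in[t,\sqrt2\,t]$ against $\|(y_1,y_2,t)\|\in[t,\sqrt3\,t]$), and the rest of the argument depends only on that.
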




%

\subsection{Functoriality}\label{subsection:functoriality}

\begin{prop}\label{prop:functb-maps}
Let $n \in \mathbb{N}$ and $b \in (0, \infty]$. There are functorial assignments $(X,Y,x_0,d,\p) \mapsto MD\pi^{b}_1(X, Y,\p)$ 
from the category of pointed pairs of metric subanalytic germs with $b$-maps to the category $\calS$ of sets.  The assignment takes place in the categories $\calG$ or $\calA\calG$ of groups or  abelian groups when the product is defined. 
\end{prop}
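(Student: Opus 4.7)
The plan is to define the induced morphism by composition of weak $b$-maps with $b$-maps. Given a $b$-map $\psi : (X,Y,x_0,d,\p) \to (X',Y',x'_0,d',\p')$ and an $(n,b)$-loop $\varphi = \{(C_j, f_j)\}_{j\in J}$ in $(X,Y,\p)$, set $\psi_*([\varphi]) := [\psi \comp \varphi]$, where $\psi \comp \varphi$ is the composition of Definition \ref{def:comp_weak}.

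First I would check that $\psi \comp \varphi$ is an $(n,b)$-loop in $(X',Y',\p')$. It is a weak $b$-map by construction: if a point $\q$ lies in the intersection of two cover pieces $f_j^{-1}(D_k)\cap C_j$ and $f_{j'}^{-1}(D_{k'})\cap C_{j'}$ of $\psi \comp \varphi$, the $b$-equivalence $f_j \comp \q \sim_b f_{j'}\comp \q$ supplied by $\varphi$ is transported to $g_k \comp f_j \comp \q \sim_b g_{k'} \comp f_{j'} \comp \q$ by the defining property of a $b$-map. For the two boundary conditions in Definition \ref{def:bn-loop}, a point $\q$ in $C(\partial I^n)$ with $\varphi \comp \q \sim_b \q_Y$ for some point $\q_Y$ in $Y$ yields $\psi \comp \varphi \comp \q \sim_b \psi \comp \q_Y$, which is $b$-equivalent to a point in $Y'$ by condition (1) of Definition \ref{def:bmaps}; for a normal point $\q$ in $C(\partial I^n \setminus (I^{n-1}\times\{1\}))$ the $b$-equivalence $\varphi \comp \q \sim_b \p$ combined with $\psi \comp \p \sim_b \p'$ (condition (2) of the same definition) gives $\psi \comp \varphi \comp \q \sim_b \p'$.

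The assignment descends to equivalence classes of weak $b$-maps and to weak $b$-homotopy classes by the same transport principle. Replacing $\psi$ by a $b$-equivalent representative or $\varphi$ by a $b$-equivalent one changes $\psi \comp \varphi$ only within its weak $b$-equivalence class. Furthermore, if $H$ is a weak $b$-homotopy between representatives $\varphi_0$ and $\varphi_1$ that preserves the inclusion of $C(\partial I^n)$ into $Y$ and is relative to $C(\partial I^n \setminus (I^{n-1}\times\{1\}))$, then $\psi \comp H$ is the analogous weak $b$-homotopy between $\psi \comp \varphi_0$ and $\psi \comp \varphi_1$ with respect to $Y'$ and $\p'$; the required properties for $\psi \comp H$ are verified by the same argument as in the previous step applied pointwise. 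Hence $[\psi\comp \varphi]$ is unambiguous.

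Compatibility with concatenation follows from unwrapping Definition \ref{def: concatenation}: composition with $\psi$ commutes with the gluing along $C(\{1/2\}\times I^{n-1})$ up to the refinement equivalence of Remark \ref{rem:refineweak}, so $\psi_*([\varphi_0 \cdot \varphi_1]) = \psi_*([\varphi_0]) \cdot \psi_*([\varphi_1])$ whenever the product is defined. This upgrades the assignment from $\calS$ to $\calG$, and in the abelian ranges recorded in Proposition \ref{prop:nb-Pi} to $\calA\calG$. Functoriality is then formal: $\Id \comp \varphi = \varphi$ and $(\psi'\comp \psi)\comp \varphi = \psi'\comp (\psi \comp \varphi)$ hold on representatives after the obvious common refinement of covers. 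The main obstacle, though routine, is the bookkeeping across these cover refinements; this is the step at which the two defining properties of $b$-maps (sending $b$-equivalent points to $b$-equivalent points, and respecting the pair and base-point structure) carry the argument.
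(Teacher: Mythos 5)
Your proposal is correct and takes essentially the same approach as the paper, which simply observes that the result is a corollary of the fact that weak $b$-maps can be composed with $b$-maps (Definition~\ref{def:comp_weak}); your writeup unwinds that one-line argument into the routine verifications (preservation of the loop and boundary conditions, descent through equivalence and homotopy classes, compatibility with concatenation and the identity/composition axioms) that the paper leaves implicit.
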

\begin{proof}
This is a corollary of the fact that weak $b$-maps can be composed with $b$-maps (see Definition~\ref{def:comp_weak}).
\end{proof}

As we did in the Moderately Discontinuous Homology groups we can enrich the invariant of the MD homotopy groups giving them the structure of an object in the category $\BB-\calS$ (recall Notation \ref{not:Bcategory}). 

Let  $(X,Y,x_0,d, \p)$ be a pointed pair of metric subanalytic germs. For any $n\in \NN$ and any $b,b'\in (0, \infty]$ with $b\geq b'$ using the obvious we get a map (which respects the product whenever it is defined)
\begin{equation}\label{eq:eta_bb'}
\eta_{b, b'}:MD\pi^{b}_n(X,Y, \p) \to MD\pi^{b'}_n(X,Y, \p)
\end{equation} 

\begin{prop}
Let $n \in \mathbb{N}$. There are functorial assignments $(X,Y,x_0,d, \p) \mapsto MD\pi^{\star}_1(X,Y,\p)$ 
from the category of pointed metric subanalytic germs with Lipschitz subanalytic l.v.a. maps to $\BB-\calS$. The assignment takes place in $\BB-\calG$ when the product is defined. 
\end{prop}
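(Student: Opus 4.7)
The plan is to reduce everything to the already established Proposition~\ref{prop:functb-maps} plus the compatibility between different values of $b$ furnished by Remark~\ref{rem:b'weak}, so the proof is a routine check that the pieces already in hand assemble into a functor into $\BB$-$\calS$.

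First I would observe that every Lipschitz l.v.a.\ subanalytic map of pairs $f\colon(X,Y,x_0,d,\p)\to(X',Y',x'_0,d',\p')$ qualifies as a $b$-map for every $b\in(0,\infty]$: the one-element cover $\{X\}$ with datum $(X,f)$ trivially satisfies the defining condition of Definition~\ref{def:bmaps}, since a Lipschitz map sends $b$-equivalent points to $b$-equivalent points (indeed $d'(f(\p(t)),f(\q(t)))\le L\,d(\p(t),\q(t))=o(t^b)$). Consequently Proposition~\ref{prop:functb-maps} already provides, for each fixed $b$, an induced map
\[
 f_*^b\colon MD\pi_n^b(X,Y,\p)\longrightarrow MD\pi_n^b(X',Y',\p'),
\]
which is a group homomorphism whenever the product is defined.

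Next I would verify that, for a fixed pointed pair $(X,Y,\p)$, the assignment $b\mapsto MD\pi_n^b(X,Y,\p)$ together with the maps $\eta_{b,b'}$ from~\eqref{eq:eta_bb'} is a functor $\BB\to\calS$. By Remark~\ref{rem:b'weak}, any weak $b$-map is a weak $b'$-map whenever $b\geq b'$, and the same is true for weak $b$-homotopies; hence $\eta_{b,b'}$ is well-defined on homotopy classes. Functoriality amounts to the identities $\eta_{b,b}=\mathrm{Id}$ and $\eta_{b',b''}\circ\eta_{b,b'}=\eta_{b,b''}$ for $b\ge b'\ge b''$, both of which are immediate because on representatives the map is the identity (a loop is simply re-interpreted at the lower exponent). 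When the concatenation product is defined, $\eta_{b,b'}$ preserves it since concatenation is implemented at the level of representatives by the same gluing recipe for every $b$; thus the functor lands in $\BB$-$\calG$.

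It then remains to check that a Lipschitz l.v.a.\ morphism $f$ as above induces a morphism in $\BB$-$\calS$, i.e.\ a natural transformation between the $\BB$-sets $MD\pi^{\star}_n(X,Y,\p)$ and $MD\pi^{\star}_n(X',Y',\p')$. Naturality is the commutativity of the square
\[
\begin{CD}
MD\pi_n^b(X,Y,\p) @>f_*^b>> MD\pi_n^b(X',Y',\p') \\
@V\eta_{b,b'}VV @VV\eta_{b,b'}V \\
MD\pi_n^{b'}(X,Y,\p) @>>f_*^{b'}> MD\pi_n^{b'}(X',Y',\p')
\end{CD}
\]
for $b\ge b'$. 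Both compositions act on a representative $\varphi=\{(C_j,f_j)\}_{j\in J}$ by producing $\{(C_j,\,f\comp f_j)\}_{j\in J}$, only re-interpreted at the exponent $b'$; hence the diagram commutes on representatives. Finally, functoriality of $f\mapsto f_*$ in $f$ follows level-wise from Proposition~\ref{prop:functb-maps}: $(g\circ f)_*^b = g_*^b\circ f_*^b$ and $(\mathrm{Id})_*^b = \mathrm{Id}$, and these identities are automatically compatible with the $\eta_{b,b'}$ by the same naturality square.

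I do not expect any serious obstacle here; the only mildly delicate point is making explicit that a Lipschitz l.v.a.\ morphism is simultaneously a $b$-map for every $b$ (so that Proposition~\ref{prop:functb-maps} can be invoked uniformly in $b$), and that the transitions $\eta_{b,b'}$ genuinely act as the identity on representatives. Once these two observations are in place, the rest is formal.
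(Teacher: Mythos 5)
Your proposal is correct and follows exactly the route the paper takes: reduce to Proposition~\ref{prop:functb-maps} via the observation that a Lipschitz l.v.a.\ map is a $b$-map for every $b$, and then check that the transition maps $\eta_{b,b'}$ make everything into a functor to $\BB$-$\calS$ (resp.\ $\BB$-$\calG$). The paper's proof is a one-line version of the same argument; you have merely spelled out the routine verifications.
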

\begin{proof}
This is a consequence of the previous proposition and the fact that Lipschitz l.v.a. maps are $b$-maps for any $b$. 
\end{proof}

\begin{notation}\label{notation:induced map}
Given a $b$-map  $g: (X,\p) \to (X',\p')$ we denote by $g^b_\ast$ the induced group homomorphisms $MD\pi_\ast^b(X, \p) \to MD\pi_\ast^b(X',\p')$ .
\end{notation}

\subsection{Loops which are small with respect to a dense subgerm}
\label{sec:denseqis}

This section is the analogue of Section~4.4 in~\cite{MDH}. 

\begin{definition}
\label{def:smallchain}
Let $(X,Y,d,x_0,\p)$ be a pointed pair of metric subanalytic germs and let $U$ be a subanalytic subset of $X$. 
A weak $b$-map $\varphi:C(I^n)\to X$ is  \emph{small with respect to $U$} if there exists a representative $\{(C_j,f_j\}_{j\in J}$ such that for $f_j(C_j)\subset U$  for every $k$. We denote by $MD\Gamma^{b,U}_{k}(X,Y,\p)$ the set of $(b,k)$-loops small with respect to $U$, and by
$MD\pi^{b,U}_{k}(X,Y,\p)$ the set of equivalence classes of $(b,k)$-loops small with respect to $U$, modulo $b$-homotopies relative to $C(\partial I^n\setminus (\partial I^{k-1}\times\{1\})$, preserving the inclusion of $C(\partial I^n)$ into $Y$ and which are small with respect to $U$. 
\end{definition}


\begin{prop}\label{prop:smallqis} 
Let $(X,Y,x_0,d,\p)$ be a pointed pair of metric subanalytic germs and let $U$ be a dense subanalytic subset of $X$.

For any $b<\infty$ the natural maps 
$$MD\Gamma^{b,U}_{k}(X,Y,\p)\to MD\Gamma^{b}_{k}(X,Y,\p)$$ 
$$MD\pi^{b,U}_{k}(X,Y,\p)\to MD\pi^{b}_{k}(X,Y,\p)$$ 
are bijective for any $k$.
\end{prop}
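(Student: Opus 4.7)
The plan is to reduce both bijectivity statements to a single \emph{density approximation lemma}: for finite $b$, every weak $b$-map $\varphi = \{(C_j, f_j)\}_{j \in J}$ from $C(I^n)$ to $X$ is $b$-equivalent to a weak $b$-map $\tilde{\varphi}$ whose representative maps into $U$. The construction should be relative, leaving $\varphi$ unchanged on any sub-cover where its image is already in $U$, and preserving the boundary conditions at $C(\partial I^n)$ that are needed for loops and homotopies (inclusion into $Y$ and relativity along $C(\partial I^n \setminus (I^{n-1} \times \{1\}))$).

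To prove the lemma, I would refine each $C_j$ by a subanalytic subdivision compatible with the preimages $f_j^{-1}(U)$ and $f_j^{-1}(V)$, where $V := X \setminus U$, and then, on the sub-pieces where $f_j$ lands in $V$, replace $f_j$ by a subanalytic l.v.a.\ map $g_{j,\alpha}$ into $U$. The target is a uniform displacement bound
\[
d\bigl(g_{j,\alpha}(x), f_j(x)\bigr) \leq C\, |x|^{b+1}
\]
for $x$ near the vertex. For any l.v.a.\ point $\q$, this gives $d(g_{j,\alpha} \comp \q(t), f_j \comp \q(t)) = O(t^{b+1}) = o(t^b)$, so the glued $\tilde{\varphi}$ is a weak $b$-map $b$-equivalent to $\varphi$; the discontinuities introduced along the interior boundaries of the subdivision are likewise of order $o(t^b)$ along l.v.a.\ points, which is all that is required. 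The existence of such $g_{j,\alpha}$ with image in $U$ rests on the density of $U$ combined with subanalytic selection: the multivalued correspondence $x \mapsto U \cap B(x, |x|^{b+1})$ is nonempty near the vertex (for $|x|$ small, by density of $U$) and subanalytic, hence admits a subanalytic l.v.a.\ section on a suitable further subdivision of the sub-piece. The hypothesis $b < \infty$ is essential here, since for $b=\infty$ weak $b$-maps are continuous and no perturbation is allowed, which is precisely why the proposition excludes that case.

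Bijectivity of the $\Gamma$-level map is then immediate: injectivity is formal (the map is induced by the inclusion of those $b$-equivalence classes admitting a representative into $U$), and surjectivity is the lemma applied to an arbitrary $(b,k)$-loop. For the $\pi$-level map, suppose $\varphi_0, \varphi_1 \in MD\Gamma^{b,U}_k$ with $[\varphi_0] = [\varphi_1]$ in $MD\pi^{b}_k$, witnessed by a weak $b$-homotopy $H \in MD\Gamma^b_{k+1}$ respecting the appropriate boundary and relativity conditions. Applying the lemma to $H$, relative to the two copies of $C(I^n)$ on which $H$ already has image in $U$ and preserving the inclusion and relativity conditions along $C(\partial I^{n+1})$, produces a small weak $b$-homotopy $\tilde{H}$. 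This establishes $[\varphi_0] = [\varphi_1]$ in $MD\pi^{b,U}_k$; surjectivity at the $\pi$-level follows similarly by applying the lemma to any representative of a given $\pi$-class.

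The main obstacle is the quantitative subanalytic approximation producing $g_{j,\alpha}$ with the displacement bound $|x|^{b+1}$; the relative version, keeping designated subgerms fixed and preserving boundary conditions, requires additional care but is routine once the basic construction is in place. The rest is a formal reduction, entirely parallel to the analogous statement for MD homology in Section~4.4 of \cite{MDH}.
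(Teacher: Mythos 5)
Your overall structure matches the paper's: both arguments reduce the bijectivity statements to the single claim that, for $b<\infty$, any weak $b$-map is $b$-equivalent to one whose pieces map into $U$, and then handle the $\pi$-level injectivity by applying that claim to a witnessing homotopy. The paper, however, does not re-prove the approximation claim --- it invokes the Claim in the proof of Proposition~46 of \cite{MDH} verbatim ("a direct application of the Claim\ldots allows to modify the maps $f_i$ within their $b$-equivalence class so that $f_j(C_j)\subset U$"). Your proposal attempts to re-derive that claim, which is where the problem lies.

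The gap is in the subanalytic selection step. You assert that the multivalued correspondence $x\mapsto U\cap B(f_j(x),|x|^{b+1})$ "admits a subanalytic l.v.a.\ section on a suitable further subdivision of the sub-piece." Definable choice in an o-minimal structure does give a \emph{definable} selection, but not a \emph{continuous} one; the fibers of this correspondence are neither convex nor connected in general, so a continuous section need not exist without further work (typically a subanalytic cell decomposition over which the correspondence trivializes, together with a fiberwise construction). That extra work is exactly the content of the Claim the paper cites, and it is not routine: it is, in fact, the entire substance of the proposition. Asserting it happens "after a suitable further subdivision" is naming the desired conclusion rather than proving it. A secondary point: for irrational $b$ the radius function $|x|^{b+1}$ is not subanalytic, so the correspondence is definable only in $\RR_{an}^{\RR}$; the paper is aware of this elsewhere, but your write-up does not address it. You do honestly flag the selection step as the "main obstacle," but as written the argument does not close it, and everything else in the proposal (the relative version, the $\pi$-level bookkeeping) is downstream of that missing step.
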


\proof
Injectivity is clear. For surjectivity, a direct application of the Claim in the proof of Proposition~46~in~\cite{MDH} allows to modify the maps $f_i$ within their $b$-equivalence class so that $f_j(C_j)\subset U$ as needed.  
\endproof

As in MD-Homology we have the following corollary:

\begin{cor}
\label{cor:closure}
Let $(X,x_0,d)$ be a metric subanalytic germ such that the metric $d$ extends to a subanalytic metric $\overline{d}$ in the closure $\overline{X}$ of $X$ in $\RR^n$. Then for any $b<\infty$ we have a bijection
$$MD\pi_k^b(X,x_0,\p)\cong MD\pi_k^b(\overline{X},x_0,\p)$$ for any $k$.
\end{cor}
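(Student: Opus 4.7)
The plan is to derive this corollary directly from Proposition \ref{prop:smallqis} by taking $U = X$ inside the ambient germ $(\overline{X}, x_0, \overline{d})$. Since $X$ is dense in $\overline{X}$ by construction and both are subanalytic, the hypotheses of Proposition~\ref{prop:smallqis} are satisfied, so the natural map
\[
MD\pi^{b,X}_{k}(\overline{X},x_0,\p)\;\longrightarrow\;MD\pi_{k}^{b}(\overline{X},x_0,\p)
\]
is bijective for every $b<\infty$ and every $k$.

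The remaining step is to identify the left hand side with $MD\pi_{k}^{b}(X,x_0,\p)$. For this I would establish a tautological identification at the level of loops: a weak $b$-map $\{(C_j,f_j)\}_{j\in J}$ from $C(I^k)$ to $\overline{X}$ with $f_j(C_j)\subset X$ for all $j$ is exactly the same data as a weak $b$-map from $C(I^k)$ to $X$, because (i) each $f_j$ is a continuous l.v.a.\ subanalytic map with image in $X$ and $\overline{d}|_X=d$, so the $b$-equivalence conditions computed with $d$ and with $\overline{d}$ coincide; (ii) the base-point condition involves only values along a point in $X$, hence is insensitive to the ambient; (iii) the boundary/pair conditions at $C(\partial I^k)$ are formulated using the metric on the target, so again agree. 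The same argument applies to weak $b$-homotopies and shows that a weak $b$-homotopy in $\overline{X}$ small with respect to $X$ is the same as a weak $b$-homotopy in $X$. This yields an identification $MD\pi^{b,X}_{k}(\overline{X},x_0,\p)=MD\pi_{k}^{b}(X,x_0,\p)$ as sets (and as groups when $k\geq 1$), respecting concatenation since concatenation is defined pointwise via the maps $\phi_{0,1/2}$, $\phi_{1/2,1}$ without reference to the ambient.

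Composing the two identifications produces the asserted bijection
\[
MD\pi_{k}^{b}(X,x_0,\p)\;\cong\;MD\pi_{k}^{b}(\overline{X},x_0,\p).
\]
I do not anticipate a substantive obstacle: the whole content is in Proposition~\ref{prop:smallqis}, which supplies the nontrivial surjectivity (any $(k,b)$-loop and any weak $b$-homotopy in $\overline{X}$ can be deformed within its $b$-equivalence class so that the representing maps land in $X$). The only point that requires care is checking that the base-point and boundary conditions of Definition~\ref{def:bn-loop} transfer without loss when one restricts the target from $\overline{X}$ to $X$; this is immediate because $\p$ already takes values in $X$ and the relevant $b$-equivalences depend only on the restriction $\overline{d}|_X=d$.
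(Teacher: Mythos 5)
Your proposal is correct and matches the paper's approach: the paper deduces Corollary~\ref{cor:closure} immediately from Proposition~\ref{prop:smallqis} (the analogue of the MD-Homology argument), taking $U=X$ as a dense subanalytic subset of $\overline{X}$ exactly as you do. Your write-up simply spells out the two steps the paper leaves implicit — the bijection $MD\pi^{b,X}_{k}(\overline{X},\p)\to MD\pi^{b}_{k}(\overline{X},\p)$ from Proposition~\ref{prop:smallqis}, and the tautological identification $MD\pi^{b,X}_{k}(\overline{X},\p)=MD\pi^{b}_{k}(X,\p)$ using $\overline{d}|_X=d$ — and both are sound.
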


\subsection{The long homotopy sequence of a pair}
\begin{theo}\label{th:longexact} Let $(X,Y, x_0, d,\p)$ be a pointed pair of closed metric subanalytic sets.
There are functorial assignments:
\begin{itemize}
\item From the category of pointed pairs of metric subanalytic sets with $b$-maps to long exact sequences of sets (respecting the product when it is defined)
$$...\to MD\pi^{b}_k(Y, \p)\to MD\pi^{b}_k(X, \p)\to MD\pi^{b}_k(X,Y,\p)\to MD\pi^{b}_{k-1}(Y, \p)\to ...$$
\item From the category of pointed pairs of metric subanalytic sets with Lipschitz subanalytic l.v.a. maps to long exact sequences of $\BB$-sets (respecting the product when it is defined)
$$...\to MD\pi^{\star}_k(Y, \p)\to MD\pi^{\star}_k(X, \p)\to MD\pi^{\star}_k(X,Y,\p)\to MD\pi^{\star}_{k-1}(Y, \p)\to ...$$
\end{itemize}
\end{theo}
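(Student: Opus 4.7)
The plan is to imitate the classical construction of the long exact sequence of a pair, with the three structural maps built out of cone maps that are continuous, subanalytic and l.v.a., so that their composition with a weak $b$-map is again a weak $b$-map (Definition~\ref{def:comp_weak}). The inclusion-induced map $i^b_k : MD\pi^b_k(Y,\p) \to MD\pi^b_k(X,\p)$ is defined via Proposition~\ref{prop:functb-maps}. The map $j^b_k : MD\pi^b_k(X,\p) \to MD\pi^b_k(X,Y,\p)$ sends $[\varphi]$ to itself, viewing a loop in $(X,\p)$ as a loop in the pair: conditions~(a) and~(b) of Definition~\ref{def:bn-loop} hold because the whole boundary is $\sim_b\p$ and $\p$ is a point in $Y$. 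The boundary map $\partial^b_k : MD\pi^b_k(X,Y,\p) \to MD\pi^b_{k-1}(Y,\p)$ is defined by restricting a representative $\{(C_j,f_j)\}$ of $\varphi$ to $C(I^{k-1}\times\{1\})$; by condition~(a) each $f_j$ on the restriction is pointwise $b$-equivalent to $Y$-valued maps, and using the closed subanalytic structure of $Y$ a $b$-equivalent representative $\{(C_j, g_j)\}$ with $g_j$ taking values in $Y$ can be chosen (an argument parallel to the one in Proposition~\ref{prop:smallqis}). Condition~(b) then guarantees the resulting weak $b$-map is a $(k{-}1,b)$-loop in $(Y,\p)$. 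Well-definedness on homotopy classes is immediate by restricting a pair-homotopy to $y_k = 1$; the homomorphism property holds since concatenation acts only on the first coordinate.

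Exactness is checked at the three positions using explicit l.v.a.\ cone maps. At $MD\pi^b_k(X,\p)$: for $j^b_k \comp i^b_k = 0$, given $\varphi\in MD\pi^b_k(Y,\p)$, composing $\varphi$ with the subanalytic l.v.a.\ map $\eta(y_{1..k+1},t):= ((y_{1..k-1}, (1-y_{k+1})y_k), t)$ provides a pair-nullhomotopy whose image lies in $Y$. Conversely, if $j^b_k[\varphi]=0$ via a pair-nullhomotopy $H$, the restriction $H|_{C(I^{k-1}\times\{1\}\times I)}$ (after the same $Y$-valued replacement) is a loop $\psi\in MD\pi^b_k(Y,\p)$ satisfying $i^b_k[\psi]=[\varphi]$, the homotopy between them being another reading of $H$ along the complementary faces of the $(k{+}1)$-cube (all of which map to $\p$). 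At $MD\pi^b_k(X,Y,\p)$: $\partial^b_k \comp j^b_k = 0$ since the top face of a loop in $(X,\p)$ is $\sim_b\p$. Conversely, if $\partial^b_k[\varphi]=0$ via a nullhomotopy $G$ of $\varphi|_{C(I^{k-1}\times\{1\})}$ in $(Y,\p)$, one glues (Remark~\ref{rem:glueweak}) a compression of $\varphi$ into $C(I^{k-1}\times[0,\tfrac12])$ with $G$ on $C(I^{k-1}\times[\tfrac12,1])$, producing a representative of $[\varphi]$ whose top face is $\p$, hence in the image of $j^b_k$. At $MD\pi^b_{k-1}(Y,\p)$: $\varphi$ itself, viewed along the $y_k$ direction, realises a weak $b$-nullhomotopy in $X$ of $\partial^b_k[\varphi]$; conversely any nullhomotopy $H : C(I^k)\to X$ of a loop $\psi\in MD\pi^b_{k-1}(Y,\p)$ in $(X,\p)$ is itself a $(k,b)$-loop in the pair with $\partial^b_k[H]=[\psi]$.

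Functoriality over both source categories follows from the naturality of pushforward, of pair-reinterpretation, and of restriction to the top face; the transition maps $\eta_{b,b'}$ of~(\ref{eq:eta_bb'}) commute with all three structural maps because these operations are independent of $b$ beyond the $b$-equivalence of domain and target. Hence, in the Lipschitz l.v.a.\ category, one obtains a long exact sequence of $\BB$-sets (respecting the group structure wherever defined). The main technical obstacle is the careful bookkeeping of $b$-equivalence under all the cone maps employed: verifying l.v.a.\ and continuous subanalyticity of each sliding/compression map; checking that the pair boundary conditions are preserved after composition with a weak $b$-map; and most delicately the subanalytic selection step in the definition of $\partial^b_k$, where a weak $b$-map whose image is pointwise $b$-near $Y$ must be replaced by a $b$-equivalent one literally valued in the closed subanalytic set $Y$.
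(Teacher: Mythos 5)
Your overall strategy coincides with the paper's: reproduce the classical long-exact-sequence argument, with the one genuinely new ingredient being the replacement of a weak $b$-map whose points are only $b$-close to $Y$ by a $b$-equivalent representative actually landing in $Y$. You identify this step as the crux. However, the tool you invoke for it is inadequate. Proposition~\ref{prop:smallqis} (and the claim from Proposition~46 of \cite{MDH} on which its proof rests) treats the case of a \emph{dense} subanalytic subset $U\subset X$ and uses the density essentially; here $Y$ is a closed subgerm of $X$ that is in general nowhere dense, so neither the statement nor its proof carries over, and it is not clear what an ``argument parallel to'' it would be.

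The ingredient the paper actually uses is Lemma~\ref{lem:proyY}, which rests on the piecewise continuous subanalytic nearest-point retraction of Lemma~\ref{lem:retracciondiscontinua} (Lemma~61 of \cite{MDH}): one partitions $X$ into finitely many subanalytic pieces $S_k$, on each of which there is a continuous subanalytic map $f_k:S_k\to Y$ with $d(z,f_k(z))=d(z,Y)$. After refining the covering so that each $g_j$ maps its piece $E_j$ into a single $S_{k(j)}$, one post-composes with $f_{k(j)}$; the hypothesis that each $g_j\comp\q$ is $b$-equivalent to a point of $Y$ gives $d(g_j\comp\q(t),Y)=o(t^b)$, hence $f_{k(j)}\comp g_j\comp\q\sim_b g_j\comp\q$, so the new representative is $b$-equivalent to the original and is $Y$-valued. (Proposition~\ref{prop:smallqis} does enter, but only as an auxiliary step inside the proof of Lemma~\ref{lem:proyY}, to pass to the dense union of the interiors of the $S_k$ before post-composing with the retractions.) Without this discontinuous retraction your proposal has no route to produce the $Y$-valued representative, and the boundary map $\partial^b_k$ is not defined; notice also that this step recurs in your exactness arguments each time you ``read $H$ along complementary faces'' and want to regard the result as a loop in $Y$. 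The rest of your exactness argument and the functoriality/$\BB$-set discussion are in order once this gap is filled.
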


The proof has the same steps as in the topological setting, but needs a preliminary result. Let us start recalling Lemma~61 of~\cite{MDH} for the convenience of the reader.

\begin{lema}
\label{lem:retracciondiscontinua}
Suppose that $S\supset Q$ are compact subanalytic subsets in $\RR^n$. Let $d$ be a subanalytic metric in $S$. There exists a partition of $S$ into finitely many disjoint subanalytic subsets $\{S_k\}_{k}$, such that there exists continuous subanalytic maps $f_k:S_k\to Q$ with the property that for any $z\in S_k$ we have the equality
\begin{equation}
\label{eq:closest}
 d(z,f_k(z))=d(z,Q).
\end{equation}
In particular $f_k(z)=z$ for any $z\in Q$. 

Moreover if $S\setminus Q$ is dense in $S$ then there exists a subanalytic stratification of $Q$ by smooth manifolds such that the union of maximal strata of the stratification by the closure relation is included in $\cup f_k(S_k\setminus Q)$. In particular $\cup f_k(S_k\setminus Q)$ is dense in $Q$.
\end{lema}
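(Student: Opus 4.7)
The plan is to apply the standard o-minimal machinery of cell decomposition together with definable choice in the (globally) subanalytic structure. Introduce the distance function $\delta : S \to \RR$ defined by $\delta(z) := d(z, Q)$; this is continuous and subanalytic because $d$ and $Q$ are. Form the subanalytic ``nearest-point graph''
$$R := \{(z, q) \in S \times Q : d(z, q) = \delta(z)\} \subseteq S\times Q,$$
whose projection onto $S$ is surjective since $Q$ is compact and $d$ is continuous.

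First I would invoke definable choice in the o-minimal subanalytic structure to obtain a (possibly discontinuous) subanalytic section $\sigma : S \to Q$ of this projection, so that $d(z, \sigma(z)) = \delta(z)$ for every $z \in S$. Then, applying cell decomposition to the graph of $\sigma$, one obtains a finite partition of $S$ into subanalytic subsets $\{S_k\}_k$ such that each restriction $f_k := \sigma|_{S_k}$ is continuous and subanalytic. The required equality $d(z, f_k(z)) = d(z, Q)$ holds by construction, and the statement $f_k(z) = z$ for $z \in Q$ is automatic because $\delta(z) = 0$ there forces $\sigma(z) = z$.

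For the moreover part, assume $S \setminus Q$ is dense in $S$ and set $T := \bigcup_k f_k(S_k \setminus Q)$, a subanalytic subset of $Q$ since it is a finite union of images of subanalytic sets under subanalytic maps. Take any subanalytic stratification of $Q$ by smooth manifolds that is compatible with $T$, so that every stratum is either contained in $T$ or disjoint from $T$. The key observation is that $T$ is dense in $Q$: for each $q \in Q$, density of $S \setminus Q$ in $S$ yields a sequence $z_n \in S\setminus Q$ with $z_n \to q$, and then
$$d(z_n, f_{k_n}(z_n)) = \delta(z_n) \leq d(z_n, q) \to 0$$
forces $f_{k_n}(z_n) \to q$, so $q \in \overline{T}$. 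Since each maximal stratum (in the closure-order sense) is open in $Q$, it must meet the dense set $T$, and by compatibility of the stratification with $T$ the whole stratum is then contained in $T$, as desired.

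The main technical point I expect to require care is justifying the simultaneous subanalytic continuous selection on each piece: one must reduce the a priori multi-valued ``closest point'' assignment $z \mapsto \mathrm{CP}(z) \subseteq Q$ to a single-valued subanalytic $\sigma$, and then cell-decompose $S$ adapted to $\sigma$ to secure piecewise continuity. This reduction is exactly the content of o-minimal definable choice and the existence of $C^0$ cell decompositions in the subanalytic category; once it is in hand, both the main assertion and the density/maximal-strata addendum follow from the elementary arguments above.
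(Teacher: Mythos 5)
Your proof is correct and takes the standard o-minimal route — definable choice applied to the nearest-point graph to produce a subanalytic selection $\sigma$, then the piecewise-continuity consequence of cell decomposition to obtain the finite partition $\{S_k\}$, and finally a subanalytic stratification of $Q$ compatible with $T=\bigcup_k f_k(S_k\setminus Q)$ together with the elementary density estimate $d(z_n,f_{k_n}(z_n))\le d(z_n,q)\to 0$ and the observation that maximal strata are open in $Q$. This lemma is only quoted here (its proof lives in the companion MD-Homology paper), but the argument you give is precisely the available one, and the details check out: in particular you correctly note that $T$ is subanalytic as a finite union of definable images, that $\delta(z)=0$ forces $\sigma(z)=z$ because $d$ is a metric, and that compatibility of the stratification with $T$ converts ``meets $T$'' into ``contained in $T$'' for the open maximal strata.
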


\begin{lema}
\label{lem:proyY}
Let $(X,Y, x_0, d,\p)$ be a pointed pair of closed metric subanalytic sets. Let $\varphi\in MD\Gamma^b_n(X,p)$ be a $(n,b)$-loop that has a representative $\{(D_i,g_i)\}_{i\in I}$ such that for every $i\in I$ and for every point $\p:(0,1)\to D_i$, we have that $g_i\comp\p$ is $b$-equivalent to a $b$-point in $Y$. Then there is a representative $\{(E_j,h_j)\}_{j\in J}$ of the weak $b$-map $\varphi$ such that $h_j(D_j)\subset Y$ for every $j\in J$.
\end{lema}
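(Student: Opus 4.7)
The strategy is to post-compose each $g_i$ with a closest-point projection onto $Y$ supplied by Lemma~\ref{lem:retracciondiscontinua}, and to refine the cover $\{D_i\}$ via the associated partition on $X$.

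First, I upgrade the pointwise hypothesis to a uniform estimate. The subanalytic function $F_i(z):=d(g_i(z),Y)/\|z-x_0\|^b$ on $D_i\setminus\{x_0\}$ satisfies $F_i\comp\p(t)\to 0$ along every l.v.a.\ subanalytic arc $\p$ in $D_i$, by the hypothesis combined with l.v.a.\ of $g_i$. Taking a subanalytic selection of the point realising $G_i(t):=\sup\{F_i(z):z\in D_i,\,\|z-x_0\|=t\}$ (automatically l.v.a.) shows that the subanalytic function $G_i$ tends to $0$ as $t\to 0$, so Lojasiewicz's inequality gives $G_i(t)\leq c\,t^\epsilon$ and hence
\[
d(g_i(z),Y)\;\leq\;c\,\|z-x_0\|^{b+\epsilon}\qquad\text{on a small representative of }D_i.
\]
Next I apply Lemma~\ref{lem:retracciondiscontinua} to a compact subanalytic representative of $X$ with $Q:=Y$, obtaining a subanalytic partition $\{S_k\}_{k\in K}$ and continuous subanalytic maps $f_k:S_k\to Y$ satisfying $d(z,f_k(z))=d(z,Y)$. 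By iteratively refining this partition via subanalytic cell decomposition of each $\overline{S_k}$ along the discontinuity locus of the piecewise closest-point selection, one arranges that each $f_k$ extends to a continuous subanalytic $\bar f_k:\overline{S_k}\to Y$.

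Pull back to define $E_{i,k}:=\overline{D_i\cap g_i^{-1}(S_k)}\subseteq D_i$. These are closed subanalytic subsets of $C(I^n)$ which cover it (since $\bigcup_k S_k\supseteq g_i(D_i)$) and satisfy $g_i(E_{i,k})\subseteq\overline{S_k}$ by continuity. Setting $h_{i,k}:=\bar f_k\comp g_i:E_{i,k}\to Y$ produces a continuous subanalytic map with image in $Y$. The estimate $\|h_{i,k}(z)-g_i(z)\|\leq c\|z-x_0\|^{b+\epsilon}$ from the uniform bound (via continuity of the distance and of $\bar f_k$), combined with the trivial bound $d(g_i(z),Y)\leq K\|z-x_0\|$ from $x_0\in Y$, implies that on a sufficiently small representative the subanalytic Lojasiewicz exponent of the error is strictly greater than $1$, so $\|h_{i,k}(z)-g_i(z)\|=o(\|z-x_0\|)$ and $h_{i,k}$ inherits l.v.a.\ from $g_i$. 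For any point $\q$ in $E_{i,k}$ one has
\[
d(h_{i,k}\comp\q(t),\,g_i\comp\q(t))\;\leq\;c\,t^{b+\epsilon}\;=\;o(t^b),
\]
giving $h_{i,k}\comp\q\sim_b g_i\comp\q$. This $b$-equivalence simultaneously provides the pairwise compatibility of $\{(E_{i,k},h_{i,k})\}$ on overlaps (through transitivity with the weak $b$-map condition of $\{(D_i,g_i)\}$) and the $b$-equivalence with the given representative, so $\{(E_{i,k},h_{i,k})\}$ is a weak $b$-map representing $\varphi$ with all its images lying in $Y$.

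The main technical obstacle is the construction of the continuous extensions $\bar f_k$: the closest-point selections coming from Lemma~\ref{lem:retracciondiscontinua} are in general discontinuous at the boundaries between pieces $S_k$, and obtaining continuous subanalytic extensions on each closure requires an iterative refinement of the partition by subanalytic cell decomposition, exploiting that a subanalytic function can be made continuous by subdividing its domain along its discontinuity locus. Once this is done, the uniform bound and the l.v.a.\ check are routine.
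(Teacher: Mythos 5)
Your proof correctly identifies Lemma~\ref{lem:retracciondiscontinua} as the key ingredient, and your Lojasiewicz uniformization of the pointwise hypothesis into a uniform estimate $d(g_i(z),Y)\leq c\,\|z-x_0\|^{b+\epsilon}$ is a valid and useful observation. However, the step on which your whole construction hinges --- that after iterative refinement of the partition each closest-point selection $f_k$ extends to a \emph{continuous} subanalytic map $\bar f_k:\overline{S_k}\to Y$ --- does not hold, and no amount of subanalytic subdivision fixes it. A continuous subanalytic map on a cell need not admit a continuous extension to the closure of any full-dimensional cell of a refinement. Concretely, take $Y$ to be the cone $\{x^2+y^2=z^2,\ z\geq 0\}\subset\RR^3$ and $X$ a ball around the origin: near any point $p$ of the open positive $z$-axis (which lies in the medial axis of $Y$) the closest-point projection to $Y$ depends on the angular coordinate, so no $3$-dimensional subanalytic piece whose closure contains $p$ can carry a continuous extension of the projection. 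In any finite subanalytic partition the union of the $3$-dimensional pieces is dense, so some such piece has a point of the positive $z$-axis in its closure. Hence $\bar f_k$ need not exist, and the map $h_{i,k}:=\bar f_k\comp g_i$ on $E_{i,k}=\overline{D_i\cap g_i^{-1}(S_k)}$ that your argument requires is simply undefined.

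The paper avoids the boundary issue altogether, and more economically, by shrinking the image rather than extending the projection: with $U:=\bigcup_k\mathrm{int}(S_k)$, which is a dense subanalytic subset of $X$, Proposition~\ref{prop:smallqis} together with a further subdivision yields a representative $\{(E_j,h'_j)\}$ of $\varphi$ such that each $h'_j(E_j)$ lies inside a single $S_{k(j)}$, where $f_{k(j)}$ is already continuous; one then sets $h_j:=f_{k(j)}\comp h'_j$ and checks $b$-equivalence exactly as you do. To repair your proof you should replace the extension claim with this manoeuvre. A further small caveat (shared with the paper's terse sketch): your assertion that the ``trivial bound'' $d(g_i(z),Y)\leq K\|z-x_0\|$ pushes the Lojasiewicz exponent above $1$ does not follow; the two bounds together give at best exponent $\max\{b+\epsilon,1\}$ but not strictly more than $1$ when $b+\epsilon\leq 1$, so the l.v.a.\ property of the composed maps is only automatic for $b\geq 1$.
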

\proof
Apply the Lemma for $S=X$ and $Q=Y$. Let $U$ be the union of the interiors of the sets $S_k$ predicted in Lemma~\ref{lem:retracciondiscontinua}. By subdivision and Proposition~\ref{prop:smallqis} $\varphi$ has a representative  $\{(E_j,h'_j)\}_{j\in J}$ such that for every $j\in J$ we have that 
$h'_j(E_j)\subset S_{k(j)}$ for a certain $k(j)$. Notice that for every point $\p:(0,1)\to E_j$ the property that $h'_j\comp\p$ is $b$-equivalent to a point of $Y$ is still satisfied. We define $h_j:=f_{k(j)}\comp h'_j$. Then $\{(E_j,h'_j)\}_{j\in J}$ and $\{(E_j,h_j)\}_{j\in J}$ are $b$-equivalent.
\endproof

\proof[Proof of Theorem~\ref{th:longexact}]
The proof is an easy adaptation of the usual proof in the topological setting, taking the following caution: while in the topological proof, at some steps, a natural map $I^n\to Y$ would be obtained, in our setting we only obtain a weak $b$-map $C(I^n)\to X$ that has the property of transforming any point in $C(I^n)$ into a point in $X$ which is $b$-equivalent to a point in $Y$. When this happens we use Lemma~\ref{lem:proyY} to obtain a representative of the weak $b$-map mapping $C(I^n)$ into $Y$, and proceed with the usual steps of the topological proof.

We just give an example of this situation: in order to define the boundary homomorphism $MD\pi^{b}_k(X,Y,\p)\to MD\pi^{b}_{k-1}(Y, \p)$ we start with an element $\varphi\in MD\pi^{b}_k(X,Y,\p)$ and take the restriction $\varphi|_{C(I^{n-1}\times\{1\})}:C(I^{n-1})\to X$. We can not ensure that the image of 
$\varphi|_{C(I^{n-1}\times\{1\})}$ falls in $Y$ in order to obtain an element of $MD\pi^{b}_{k-1}(Y, \p)$ as needed, but using Lemma~\ref{lem:proyY} we remedy this situation.
\endproof

\subsection{$b$-connectedness, $b$-path connectedness and independence of base point.}\label{sec:conn}

%

\begin{definition}\label{def:b-pathconnected}
Let $(X, x_0, d)$ be a metric subanalytic germ. It is called {\em $b$-path connected}, if for any two points $\q_0$ and $\q_1$ in $X$, there is a weak $b$-map $\eta : C(I) \to X$ with   $\eta(0,t)=\q_0$ and $\eta(1,t)=\q_1$.
\end{definition}

Note that in order to have $\eta$ connecting $\q_0$ and $\q_1$ it is enough to have a weak $b$-map connecting some points $\q_0'$ and $\q_1'$ $b$-equivalent to the $\q_i$'s.

The concept of $b$-path connectedness is related to the concept of $b$-connectedness, which was defined in Section 9 in \cite{MDH} as follows:

\begin{definition}
\label{def:b-connected} Let $(X,x_0)$ be a metric subanalytic germ. Two connected components $X^1$ and $X^2$ of $X\setminus\{x_0\}$ are $b$-\emph{equivalent} if there exist points $\q_1$ in $X^1$ and $\q_2$ in $X^2$ that are $b$-equivalent.   
The equivalence classes are called the $b$-\emph{connected components} of $X$. The $\infty$-connected components are the usual connected components of $X\setminus\{x_0\}$.

We say $(X, x_0, d)$ is {\em $b$-connected} if it has only one $b$-connected component.
\end{definition}

They are in fact equivalent concepts: 

\begin{lema}\label{lema:connect two points}
A metric subanalytic germ $(X, x_0, d)$ is $b$-path connected if and only if it is $b$-connected.
\end{lema}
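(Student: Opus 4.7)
The plan is to prove both implications separately. The easier direction is $b$-path connected $\Rightarrow$ $b$-connected: given two points $\q_0, \q_1$ and a weak $b$-map $\eta = \{(C_j,f_j)\}_{j\in J}$ from $C(I)$ to $X$ joining them, I want to show that the connected components $X^1, X^2$ of $X\setminus\{x_0\}$ containing the images of $\q_0$ and $\q_1$ sit in the same $b$-connected component. First I refine the cover, via subanalytic cell decomposition of $C(I)$ compatible with $\{C_j\}$, so that each $C_j$ is connected (hence $f_j(C_j)$ sits in a unique connected component $Y_j$ of $X\setminus\{x_0\}$ by l.v.a.). Since near the vertex the decomposition is conical, it produces a finite partition $0=y_0<y_1<\dots<y_m=1$ of $I$ such that on each open arc $(y_{i-1},y_i)$ the cone lies in a common cover piece $C_{j_i}$, while the ray $y_i$ lies in $C_{j_i}\cap C_{j_{i+1}}$. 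The weak $b$-map condition applied to the normal point $\q_{y_i}(t)=(y_i t,t)$ gives $f_{j_i}\comp \q_{y_i}\sim_b f_{j_{i+1}}\comp \q_{y_i}$, which exhibits a $b$-equivalence between the components $Y_{j_i}$ and $Y_{j_{i+1}}$. Chaining these and using $Y_{j_1}\sim_b X^1$, $Y_{j_m}\sim_b X^2$ (from the boundary conditions on $\eta$) yields $X^1\sim_b X^2$.

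For the converse, assume $(X,x_0,d)$ is $b$-connected. Given $\q_0,\q_1$ in components $X^1,X^2$ of $X\setminus\{x_0\}$, I obtain a chain $X^1=Y_0\sim_b Y_1\sim_b\dots\sim_b Y_m=X^2$ with witnessing pairs $\r_i\in Y_{i-1}$, $\r_i'\in Y_i$ satisfying $\r_i\sim_b\r_i'$. I will build $\eta:C(I)\to X$ by concatenating two kinds of pieces:
(a) \emph{Continuous intra-component paths} joining two given points $\q,\q'$ of a single component $Y$. Using a conical structure $h\colon C(L_X)\to X$ compatible with $Y$ (Definition \ref{def:con_struc}), the link $L_Y$ is connected and hence path-connected (as a connected subanalytic set). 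Writing $\q(t)=h(\alpha(t),\beta(t))$, by o-minimality and compactness $\alpha(t)$ admits a limit $p\in L_Y$ as $t\to 0$, and similarly $p'$ for $\q'$. I concatenate three continuous l.v.a.\ subanalytic pieces built by formulas such as $(y,t)\mapsto h(\alpha(t(1-3y)),(1-3y)\beta(t)+3yt)$ going from $\q$ to the straight ray $\r_p(t)=h(p,t)$, then a cone over a subanalytic path $\gamma\subset L_Y$ from $p$ to $p'$, and finally an analogous interpolation to $\q'$.
(b) \emph{$b$-jumps} between points $\r\sim_b\r'$, defined as a two-piece weak $b$-map on $C([0,\tfrac12])\cup C([\tfrac12,1])$ with constant-in-$y$ pieces $f_0(y,t)=\r(t)$ and $f_1(y,t)=\r'(t)$; the required $b$-equivalence on the shared face $C(\{\tfrac12\})$ follows from $\r\sim_b\r'$ together with the l.v.a.\ reparametrization argument (since any point on $C(\{\tfrac12\})$ has the form $(s(t)/2,s(t))$ with $s(t)\asymp t$).
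Gluing (a)-pieces inside each $Y_i$ with (b)-pieces between consecutive pairs $(\r_i,\r_i')$ via Remark \ref{rem:glueweak} produces the desired weak $b$-map from $\q_0$ to $\q_1$.

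The main technical obstacle is the forward direction: one needs to extract from the abstract cover $\{C_j\}$ a combinatorial chain in which consecutive pieces overlap on a set containing a \emph{normal} point witnessing the $b$-equivalence of their image components. The careful use of subanalytic cell decomposition of $C(I)$ compatible with the cover, plus the conical nature of the decomposition near the vertex, is what makes this work; the continuous interpolation in (a) is standard once one reduces to the link via the conical structure.
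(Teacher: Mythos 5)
Your proposal is correct. For the implication $b$-connected $\Rightarrow$ $b$-path connected it follows essentially the same strategy as the paper: take a chain of $b$-equivalent connected components of $X\setminus\{x_0\}$ with witnessing pairs, push an arbitrary point within a component to a retraction line of a compatible conical structure, connect retraction lines by a cone over a subanalytic path in the link (which is subanalytic path connected), and insert a two-piece weak $b$-map to jump across each pair of $b$-equivalent points. The only notable difference is the interpolation formula in your step (a): you push to the retraction line through the limit $p=\lim_{t\to 0}\alpha(t)$, relying on o-minimality for the existence of the limit and on the fact that $L_Y$ is a (closed) union of connected components of the compact link to place $p$ in $L_Y$; the paper instead avoids taking a limit by shifting the parameter, setting $\theta(t,s)=h_1(\p(t+s\epsilon))$ for $t\le(1-s)\epsilon$, with $\theta(t,s)=h_1(\p(\epsilon))$ otherwise, and composing with the conical structure while holding the radius fixed. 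Both produce valid l.v.a.\ subanalytic weak $b$-homotopies, so this is a cosmetic divergence. Your $b$-jump construction uses two constant pieces on $C([0,\tfrac12])$ and $C([\tfrac12,1])$; the paper uses instead the pair $\{(C(\{0\}),\p),(C(I),\q)\}$, again an inessential variant, and your l.v.a.\ reparametrization argument for the gluing condition is correct. You also supply a proof of the converse implication ($b$-path connected $\Rightarrow$ $b$-connected), which the paper treats as immediate and omits; your argument — refine to a connected subanalytic cover, pass to a compatible conical structure to obtain a partition $0=y_0<\dots<y_m=1$ of $I$ with each cone over an open arc inside a single piece, and apply the weak $b$-map condition at the normal points $\q_{y_i}$ to chain the $b$-equivalences of the image components — is a correct way to make this explicit.
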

\begin{proof}
Assume $(X,x_0,d)$ is $b$-connected. Let $\p$ and $\p'$ be two points in $X$. 

Let $X^1$, $X^2$, ... , $X^N$ be the connected components of $X\setminus \{x_0\}$ (it is a finite number because $X\setminus\{0\}$ is a subanalytic set). Moreover they are subanalytic path connected (see for example Section 3.2 in \cite{Coste:1999}). Assume $\p_1:=\p$ belongs to $X^1$ and $\p'_m:=\p'$ to $X^m$. Then, by $b$-connectedness, possibly after renaming the connected components $X^i$, there exist points $\p'_1$, $\p_2$, $\p_2'$, ..., $\p_{m-1}$,  $\p_{m-1}'$,  $\p_m$ such that $\p_k$ and $\p_k'$ belong to $X^k$ and $\p_k'\sim_b \p_{k+1}$. 
Let's find weak $b$-maps $\eta_k:C(I)\to X$ connecting $\p_k$ with $\p'_{k}$   and $\eta'_k:C(I)\to X$ connecting $\p_k'$ to $\p_{k+1}$. Then, concatenating them we would have a weak $b$-map $\eta:C(I)\to X$ connecting $\p=\p_1$ and $\p'=\p'_m$.   

In general, given two $b$-equivalent points $\p$ and $\q$ in $X$, there always exists a weak $b$-map $\mu: C(I)\to X$ connecting them. Consider $\mu:=\{(C(0),\p), (C(I),f)\}$ where the map $f:C(I)\to X$ is defined as $f(y,t)=\q(t)$. So in particular we can easily find the $\eta'_k$ claimed in the paragraph before.

Consider now a point $\p:(0,\epsilon)\to X^k$. By Remark~\ref{rem:lvareduction} we can choose a conical structure, that is a subanalytic homeomorphism $h:X\to C(L_X)$, such that if its components are $h=(h_1,h_2)$, where $h_1:X\to L_X$ is the link component and $h_2:X\to (0,\epsilon)$, then $h_2(x)=||x||$. Define $\theta:(0,\epsilon)\times[0,1]\to L_X$ by the formulae $\theta(t,s):=h_1(\p(\epsilon))$ if $t\geq (1-s)\epsilon$ and $\theta(t,s):=h_1(\p(t+s\epsilon))$ if $t\leq (1-s)\epsilon$. Then $\eta:(0,\epsilon)\times[0,1]\to X$ defined by $\eta:=h^{-1}(\theta, ||\p||)$ is a weak $b$-map connecting $\p$ to a point $\q:(0,\epsilon)\to X^k$ whose image is a retraction line of the conical structure. 

Let $\p_k$ and $\p'_k$ be points in $X^k$. Connect each of them as in the previous paragraph with points $\q_k$ and $\q'_k$ whose image are retraction lines. Now using that $L_{X^k}$ is also subanalytic path-connected (see Section 3.2 in \cite{Coste:1999}), we construct easily a weak $b$-map $\eta_k$ connecting $\q_k$ with $\q'_k$. 
\end{proof}


After this, as in the topological setting we have the usual result:
\begin{prop}[Independence of base point]\label{prop:indbasepoint}
Let $(X, x_0, d)$ be a $b$-connected metric subanalytic germ. Let $\p_1$ and $\p_2$ be points in $X$ and let $\gamma$ be a weak $b$-map from $C(I)$ to $X$ connecting $\p_1$ and $\p_2$. Let $\hat{\gamma}$ be the weak $b$-map from $C(I^n)$ to $X$ defined by the formula $\hat{\gamma} := \gamma \comp \rho$, where $\rho : C(I^n) \to C(I)$ is the projection $\rho(y_{1..n}, t) := (y_1, t)$. Then the homomorphism
$$
\zeta : MD\pi_n^b(X, \p_1) \rightarrow MD\pi_n^b(X, \p_2)
$$
defined by $\zeta(\varphi) := \overleftarrow{\hat{\gamma}} \cdot \varphi \cdot \hat{\gamma}$ is an isomorphism. Moreover, its inverse is $\zeta^{-1}(\varphi) := \hat{\gamma} \cdot \varphi \cdot \overleftarrow{\hat{\gamma}}$.
\end{prop}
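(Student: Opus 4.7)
The plan is to follow the classical change-of-basepoint argument, adapting each step to the weak $b$-map setting. First I would check that $\zeta$ is well-defined as a map of sets. Since $\hat{\gamma}=\gamma\comp\rho$ only depends on $y_1$, its values on $C(\{0\}\times I^{n-1})$ are $b$-equivalent to $\p_1$, which matches the boundary values of $\varphi$ on the face $C(\{1\}\times I^{n-1})$ by the $(n,b)$-loop condition; analogously for the concatenation with $\overleftarrow{\hat{\gamma}}$. So the triple concatenation in Definition \ref{def: concatenation} is legitimate. The boundary conditions defining an $(n,b)$-loop based at $\p_2$ follow by tracking the values on the faces $C(\partial I^n\setminus(I^{n-1}\times\{1\}))$: on the two $y_1$-faces one obtains $\p_2$ by construction of $\hat{\gamma}$, and on the remaining vertical faces the formula is independent of coordinates $y_2,\dots,y_n$ so the same identifications persist.

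Next, to see $\zeta$ descends to $b$-homotopy classes, given a weak $b$-homotopy $H:C(I^{n+1})\to X$ between $\varphi_0$ and $\varphi_1$, I would form the homotopy $\overleftarrow{\hat{\gamma}}'\cdot H\cdot\hat{\gamma}'$, where $\hat{\gamma}'$ is the pullback of $\gamma$ under $C(I^{n+1})\to C(I)$, $(y_{1..n+1},t)\mapsto(y_1,t)$, i.e., the natural extension of $\hat\gamma$ as a constant homotopy. By the same boundary analysis this is a weak $b$-homotopy relative to $C(\partial I^n\setminus(I^{n-1}\times\{1\}))$ preserving the inclusion of $C(\partial I^n)$ in the image of $\p_2$, linking $\zeta(\varphi_0)$ to $\zeta(\varphi_1)$.

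For the homomorphism property, I must show
\[
\overleftarrow{\hat{\gamma}}\cdot\varphi_1\cdot\hat{\gamma}\cdot\overleftarrow{\hat{\gamma}}\cdot\varphi_2\cdot\hat{\gamma}\quad\sim_b\quad \overleftarrow{\hat{\gamma}}\cdot\varphi_1\cdot\varphi_2\cdot\hat{\gamma}
\]
up to the $b$-homotopies defining $MD\pi_n^b(X,\p_2)$. The essential ingredient is the middle cancellation $\hat{\gamma}\cdot\overleftarrow{\hat{\gamma}}\sim_b c_{\p_1,n}$. I would obtain this by applying Lemma \ref{lemma: existence inverse element} to the one-dimensional weak $b$-map $\gamma$ to get a homotopy $K:C(I^{2})\to X$ relative to $C(\partial I)$ from $\gamma\cdot\overleftarrow{\gamma}$ to $c_{\p_1,1}$, and then promoting it to $n$-dimensions as $K\comp\rho'$, where $\rho':C(I^{n+1})\to C(I^{2})$ drops the coordinates $y_{2..n}$ while retaining the homotopy parameter. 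The reparametrization identities (associativity up to weak $b$-homotopy and identity property of constant loops) are the standard ones that already underlie Proposition \ref{prop:nb-Pi}. Applying the same cancellation on the left and the right gives $\zeta^{-1}(\zeta(\varphi))\sim_b c_{\p_2,n}\cdot\varphi\cdot c_{\p_2,n}\sim_b\varphi$, with the symmetric computation yielding $\zeta(\zeta^{-1}(\varphi))\sim_b\varphi$, so $\zeta^{-1}$ is indeed inverse to $\zeta$.

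The step I expect to be the main technical obstacle is the promotion of the one-dimensional cancellation provided by Lemma \ref{lemma: existence inverse element} to an $n$-dimensional weak $b$-homotopy that respects the boundary requirements defining $MD\pi_n^b$. The fact that $\hat\gamma$ is pulled back via $\rho$ from a one-dimensional map makes this essentially a pullback construction, but one must check that composition with $\rho'$ keeps the resulting homotopy constant along $C(\partial I^n\setminus(I^{n-1}\times\{1\}))$ and $b$-equivalent to $\p_1$ (or $\p_2$) on the appropriate faces; this amounts to verifying that the cover realising the one-dimensional homotopy pulls back to a valid cover in $C(I^{n+1})$, which is routine but bookkeeping-heavy.
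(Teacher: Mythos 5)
The step ``on the remaining vertical faces the formula is independent of coordinates $y_2,\dots,y_n$ so the same identifications persist'' is where the argument breaks down, and it is a genuine gap. For $n\geq 2$ the triple concatenation $\overleftarrow{\hat\gamma}\cdot\varphi\cdot\hat\gamma$ in the $y_1$-direction is \emph{not} an $(n,b)$-loop based at $\p_2$: on a lateral face $\{y_k=0\}$ with $k\geq 2$, the outer thirds coming from $\hat\gamma$ and $\overleftarrow{\hat\gamma}$ restrict to $\gamma(\,\cdot\,,t)$ and its reverse, tracing the whole path $\gamma$ back and forth, while only the middle third (from $\varphi$) is constant at $\p_1$. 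Thus for a normal point $\q$ landing in the $\hat\gamma$-portion of such a face, $\zeta(\varphi)\comp\q$ is a $b$-point sitting on $\gamma$, and in general it is neither $\sim_b\p_2$ nor $b$-equivalent to a point in $\mathrm{Im}(\p_2)$. Independence of $\hat\gamma$ on the coordinates $y_2,\dots,y_n$ does not make the lateral faces constant; it forces them to carry the full $y_1$-variation of $\gamma$. Consequently $\zeta(\varphi)$ fails conditions (a) and (b) of Definition~\ref{def:bn-loop} with respect to $\p_2$, so it is not even an element of $MD\Gamma_n^b(X,\p_2)$, and the subsequent well-definedness, homomorphism, and invertibility arguments cannot get started.

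The paper itself states the proposition without proof (``as in the topological setting we have the usual result''), so there is nothing to compare against; but the classical construction it invokes does not use a one-direction concatenation. The standard basepoint-change for $\pi_n$ (e.g.\ \cite{Hatcher}) shrinks $\varphi$ radially into the interior of $I^n$ and fills the annular collar by $\gamma$ pulled back along the radial direction, so that all of $\partial I^n$ goes to the new basepoint. To repair your proposal you would either need to replace the triple concatenation by such a radial construction (and verify that the radially pulled-back $\gamma$ is a weak $b$-map on the collar, which is routine since it only uses the cone structure of $C(I^n)$), or supply an additional weak $b$-homotopy pushing the $\gamma$-boundary on the lateral faces inward to $\p_2$ and then re-express $\zeta$ as this composite. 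The remaining parts of your write-up --- promoting the $1$-dimensional cancellation of Lemma~\ref{lemma: existence inverse element} via pullback, and the homotopy-level arguments --- are in order once a correct definition of $\zeta(\varphi)$ is in place, but this first step is exactly the one where the classical argument does real work and your shortcut does not.
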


\subsection{Metric homotopy invariance}

There are two notions of metric homotopy under which we have invariance. We adapt the relevant definitions from~\cite{MDH}, and state the corresponding results. We do not include proofs because they are obvious consequences of the fact that weak $b$-maps can be composed with $b$-maps, or routine variations of this.

\subsubsection{Metric homotopies}

\begin{definition}[Metric homotopy]
\label{def:1homotopy}
Let $(X,Y,x_0,d,\p)$ and $(X',Y',x'_0,d',\p')$ be pointed pairs of metric subanalytic germs. Let 
$$f,g:(X,Y,x_0,d,\p)\to (X',Y',x'_0,d',\p')$$
be Lipschitz l.v.a. subanalytic maps of pointed pairs. A continuous subanalytic map $H:X \times I \to X'$ is called a {\em metric homotopy between $f$ and $g$}, if there is a uniform constant $K\geq 0$ such that for any $s$ the mapping 
$$H_s := H(-,s):(X,Y,x_0,d,\p)\to (X',Y',x'_0,d',\p')$$ is a Lipschitz l.v.a. subanalytic  of pointed pairs with  Lipschitz l.v.a. constant $K$, and $H_0=f$ and $H_1=g$. If such $H$ exists we say that $f$ and $g$ are {\em metrically homotopic}. 
\end{definition}

\begin{prop}\label{prop:homotopyinvariance}
Let $(X,Y,x_0,d,\p)$ and $(X',Y',x'_0,d',\p')$ be pointed pairs of metric subanalytic germs. 
\begin{enumerate}
\item Let $f,g:(X,Y,x_0,d,\p)\to (X',Y',x'_0,d',\p')$ be  l.v.a. subanalytic maps of pointed pairs such that there exists a continuous subanalytic mapping $H:(X,Y)\times I\to (X',Y')$ with $H_0=f$ and $H_1=g$ satisfying that there exists a uniform constant $K>0$ such that for every $s$, the mapping $H_s$ is l.v.a. for the constant $K$. Then we have that both $f_k^\infty,g_k^\infty:MD\pi^\infty_k(X,Y,\p)\to MD\pi^\infty_k(X',Y',\p')$ are the same map for any $k$.
 \item\label{metric homotopy invariance} Moreover, if $f$ and $g$  are metrically homotopic. Then 
 $$f^{\star}_k,g^{\star}_k:MD\pi^\star_k(X,Y,\p)\to MD\pi^\star_k(X',Y',\p')$$ are equal for any $k$ and $\star$.
\end{enumerate}
\end{prop}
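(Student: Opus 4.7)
The plan is to mimic the classical proof of homotopy invariance, using $H$ to directly build a weak $b$-homotopy between $f\comp\varphi$ and $g\comp\varphi$ for every $(n,b)$-loop $\varphi$. Given a representative $\varphi=\{(C_j,f_j)\}_{j\in J}$ in $MD\Gamma_n^b(X,Y,\p)$, I set
$$\widetilde{C}_j := \{(y_{1..n+1},t)\in C(I^{n+1}) : (y_{1..n},t)\in C_j\},\qquad \widetilde{f}_j(y_{1..n+1},t) := H(f_j(y_{1..n},t),\, y_{n+1}),$$
and propose $\widetilde{H} := \{(\widetilde{C}_j,\widetilde{f}_j)\}_{j\in J}$ as a weak $b$-map from $C(I^{n+1})$ to $X'$. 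Note that $\widetilde{H}\comp\iota_0 = f\comp\varphi$ and $\widetilde{H}\comp\iota_1 = g\comp\varphi$ by construction, so once we know this is a weak $b$-homotopy with the correct boundary behaviour the result is immediate.

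The verification splits into three checks. First, each $\widetilde{f}_j$ is continuous and l.v.a.: continuity follows from that of $H$ and $f_j$, while the uniform l.v.a.\ constant $K$ of the $H_s$ yields the l.v.a.\ estimate for $\widetilde{f}_j$ independently of the $y_{n+1}$-slice. Second, for points $\q$ in the intersection $\widetilde{C}_{j_1}\cap \widetilde{C}_{j_2}$, writing $\q(t)=(y_{1..n+1}(t),t)$ and denoting by $\rho\comp\q$ its projection to $C(I^n)$, the uniform Lipschitz constant of $H_s$ gives
$$d'\bigl(\widetilde{f}_{j_1}(\q(t)),\widetilde{f}_{j_2}(\q(t))\bigr) \leq K\cdot d\bigl(f_{j_1}(\rho\comp\q(t)),\, f_{j_2}(\rho\comp\q(t))\bigr),$$
and the right-hand side is $o(t^b)$ by the weak $b$-map condition on $\varphi$. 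This is precisely where part (2) needs the uniform Lipschitz hypothesis; for part (1) the $b=\infty$ condition $\sim_\infty$ is actual equality of germs, so mere continuity of $H$ suffices and no Lipschitz control is required.

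Third, I verify the boundary data, which uses that $H$ is a homotopy through maps of pointed pairs. For a point $\q$ with $\rho\comp\q$ in $C(\partial I^n)$, the $(n,b)$-loop condition gives $\varphi\comp\rho\comp\q\sim_b \q_Y$ for some point $\q_Y$ in $Y$; since $H_s(Y)\subseteq Y'$ uniformly in $s$ and $H_s$ is Lipschitz, $\widetilde{H}\comp\q$ is $b$-equivalent to $H(\q_Y(t),y_{n+1}(t))$, which lies in $Y'$, so the inclusion of $\partial I^n$ into $Y'$ is preserved. For $\rho\comp\q$ in $C(\partial I^n\setminus(I^{n-1}\times\{1\}))$ we further have $\varphi\comp\rho\comp\q\sim_b\p$; since each $H_s$ preserves the base point, $H(\p(t),y_{n+1}(t))=\p'(t)$, and the same Lipschitz estimate upgrades this to $\widetilde{H}\comp\q\sim_b\p'\sim_b (f\comp\varphi)\comp\rho\comp\q$, giving the required relativity. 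Concatenating this construction in the homotopy parameter also shows well-definedness on classes, proving $f^b_k=g^b_k$ for every $b$, which is compatible with the maps $\eta_{b,b'}$ of Equation~\eqref{eq:eta_bb'} and hence promotes to an equality in $\BB$-$\calG$. The main subtlety, and essentially the only place the Lipschitz hypothesis enters, is the matching condition on intersections of cells in step two; everything else is a routine transcription of the classical argument.
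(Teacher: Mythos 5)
Your proposal is correct and fleshes out exactly what the paper intends: the authors state that these propositions are ``obvious consequences of the fact that weak $b$-maps can be composed with $b$-maps, or routine variations of this,'' and your construction of $\widetilde H = \{(\widetilde C_j,\widetilde f_j)\}$ is precisely that routine variation, namely the composition of $(\varphi,\mathrm{id})$ with the homotopy $H$, written out explicitly and with the three required checks (continuity/l.v.a.\ of the pieces via the uniform constant, the $o(t^b)$ matching on overlaps via the uniform Lipschitz constant, and the boundary/relativity conditions via the triangle inequality together with $H_s\comp\p=\p'$) verified in detail. The one small inaccuracy is cosmetic: for $b=\infty$ there is no $\sim_\infty$ relation to satisfy, since weak $\infty$-maps are simply continuous l.v.a.\ subanalytic maps, but your conclusion that mere continuity plus uniform l.v.a.\ suffices for part (1) is correct.
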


\subsubsection{$b$-homotopies}
Let $(X,Y,x_0,d,\p)$ be a pointed pair of metric subanalytic germs. Assume that the metric $d$ satisfies $d_{out}\leq d$ (inner and outer metrics satisfy this). Recall from~\cite{MDH} (Definition~82), the definition of the metric subanalytic germ $X\times_\p I$. Obviously $Y\times_\p I$ is a subgerm of $X\times_\p I$, and we consider in it the metric induced from $X\times_\p I$.

\begin{definition}[$b$-homotopy]\label{def:b-homotopy}
Let $(X,Y,x_0,d,\p)$ and $(X',Y',x'_0,d',\p')$ be pointed pairs of metric subanalytic germs.metric subanalytic germs. A {\em b-homotopy} between them is a $b$-map from $X\times_\p I$ to $X'$ such that there exists a representative $\{(C_i,f_i)\}_{i\in I}$ satisfying
\begin{enumerate}
 \item for any point $\p$ with image inside $C_i \cap Y\times_\p I$, the point $f_i\comp \p$ is $b$-equivalent to a point in $Y'$,
 \item if $\frak{r}:(0,\epsilon)\to X\times_\p I$ projects to the point $\p$ under the natural projection to $X$, then then $f_i\comp \frak{r}$ is $b$-equivalent to $\p'$.
\end{enumerate}
\end{definition}

\begin{prop}\label{prop:homotopyinvariance2} If there is a $b$-homotopy $H$ with $H_0=f$ and $H_1=g$,  then  
 $$f_k^b,g_k^b:MD\pi_k^{b}(X,Y,\p)\to MDC^b_k(X',Y',\p')$$
 represent the same map for any $k$. 
\end{prop}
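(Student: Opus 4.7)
The plan is to mimic the classical topological construction: given an $(n,b)$-loop $\varphi$ in $(X,Y,\p)$, build a tautological weak $b$-map from $C(I^{n+1})$ to $X\times_{\p}I$ whose two end-faces recover $\varphi$, then post-compose with the $b$-homotopy $H$ to obtain the desired weak $b$-homotopy in $X'$ from $f\comp\varphi$ to $g\comp\varphi$.

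First I would define the auxiliary map $\Phi:C(I^{n+1})\to X\times_{\p}I$ as follows. Writing a representative $\varphi=\{(C_j,f_j)\}_{j\in J}$, let $\rho:C(I^{n+1})\to C(I^n)$ be the projection from Notation~\ref{not:maps} (which is continuous, subanalytic and l.v.a.), and set $C_j':=\rho^{-1}(C_j)$ and $g_j(y_{1..n+1},t):=(f_j(y_{1..n},t),y_{n+1})$. Because $d_{out}\leq d$ and the $I$-coordinate is bounded, each $g_j$ is continuous, subanalytic and l.v.a.\ from $C_j'$ to $X\times_{\p}I$, and the $b$-equivalence of the $f_{j_1}\comp\q$ and $f_{j_2}\comp\q$ on overlaps passes through to $g_{j_1},g_{j_2}$. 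Hence $\Phi:=\{(C_j',g_j)\}_{j\in J}$ is a well-defined weak $b$-map. By Definition~\ref{def:comp_weak}, composing with the $b$-map $H$ from the left yields a weak $b$-map
\[
\widetilde{H}:=H\comp\Phi:C(I^{n+1})\longrightarrow X'.
\]

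Next I would check that $\widetilde{H}$ is the required weak $b$-homotopy. On the face $y_{n+1}=0$ the map $\Phi\comp\iota_0$ agrees with $\varphi$ viewed in $X\times_{\p}\{0\}$, and since $H_0=f$ as a $b$-map we obtain $\widetilde{H}\comp\iota_0=f\comp\varphi$ (up to $b$-equivalence, which is all that matters for weak $b$-maps); symmetrically $\widetilde{H}\comp\iota_1=g\comp\varphi$. For the boundary conditions, let $\q$ be a normal point in $\rho^{-1}(C(\partial I^n\setminus(I^{n-1}\times\{1\})))$. Then $\Phi\comp\q$ projects onto $\varphi\comp(\rho\comp\q)$, which is $b$-equivalent to $\p$ because $\varphi$ is an $(n,b)$-loop; condition (2) of Definition~\ref{def:b-homotopy} then gives $\widetilde{H}\comp\q\sim_b\p'$, which is exactly the condition to be relative to this subgerm (and in particular ensures $\widetilde{H}\comp\iota_s$ is a well-defined $(n,b)$-loop for every $s$). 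Similarly, for $\q$ a point in $\rho^{-1}(C(\partial I^n))$, the image $\varphi\comp(\rho\comp\q)$ is $b$-equivalent to a point in $Y$, so condition (1) of Definition~\ref{def:b-homotopy} gives that $\widetilde{H}\comp\q$ is $b$-equivalent to a point of $Y'$; that is, $\widetilde{H}$ preserves the inclusion of $C(\partial I^n)$ into $Y'$.

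Assembling these facts, $\widetilde{H}$ is a weak $b$-homotopy from $f\comp\varphi$ to $g\comp\varphi$ that is relative to $C(\partial I^n\setminus(I^{n-1}\times\{1\}))$ and preserves the inclusion of $C(\partial I^n)$ in $Y'$. Hence $[f\comp\varphi]=[g\comp\varphi]$ in $MD\pi_n^b(X',Y',\p')$, which is the claim. The main point where care is needed is the verification that the cover $\{C_j'\}$ and the piecewise maps $g_j$ genuinely produce a weak $b$-map into $X\times_{\p}I$ with the metric of Definition~82 in~\cite{MDH}, so that the composition $H\comp\Phi$ of Definition~\ref{def:comp_weak} is legitimate; once that is in place, the rest is the routine bookkeeping sketched above, and can safely be referred to \cite{thesis} as in the other proofs of this section.
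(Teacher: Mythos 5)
Your proposal is correct and is exactly the construction the paper has in mind: the paper deliberately omits the proof, remarking that Propositions~\ref{prop:homotopyinvariance} and~\ref{prop:homotopyinvariance2} are ``obvious consequences of the fact that weak $b$-maps can be composed with $b$-maps,'' and your $\Phi$-then-$H\comp\Phi$ construction is precisely that composition spelled out. One small place where the bookkeeping is slightly off: when you invoke condition (2) of Definition~\ref{def:b-homotopy} to get $\widetilde{H}\comp\q\sim_b\p'$, that condition is stated only for points $\frak{r}$ whose projection to $X$ is literally $\p$, whereas $\Phi\comp\q$ projects to something merely $b$-equivalent to $\p$; the fix is to note that $\Phi\comp\q$ is $b$-equivalent in $X\times_{\p}I$ to a point $\frak{r}$ projecting exactly to $\p$ (same $I$-coordinate, $X$-coordinate $\p$), and then to use that $H$ is a $b$-map (so it sends $b$-equivalent points to $b$-equivalent points) together with condition (2) applied to $\frak{r}$. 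The analogous remark applies to the use of condition (1).
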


\subsection{The metric Hurewicz homomorphism}\label{subsection:Hurewiczmorphism}
In the same way as in the topological homotopy and homology theories, for the $b$-MD homology and homotopy theories there is a Hurewicz homomorphism relating them.

We recall that a simplex in $(X,0)$  in the  Moderately Discontinuous Homology is a  l.v.a. subanalytic mapping germ $\sigma:\hat{\Delta}_n\to (X,0)$ being $\hat{\Delta}_n$ the cone over the standard simplex $\Delta_n$. We say that two $n$-simplices $\sigma$, $\sigma':\hat{\Delta}_n\to (X,0)$ are $b$-equivalent in the metric subanalytic germ $(X,0,d)$ if for every point $p$ in $\hat{\Delta}_n$ we have that $\sigma\circ p$ is $b$-equivalent to $\sigma'\circ p$. 

The $b$-MD Chain group $MDC^b_n(X)$ in dimension $n$ is the quotient of the group of formal finite sums of classes of simplices up to $b$-equivalence by the homological subdivision equivalence relation (see Definition 22 in Section 3 in \cite{MDH}), which essentially comes from identifying a simplex $\sigma:\hat{\Delta}_n\to (X,0)$ with any formal sum $\sum_i\sigma\circ\rho_i$ where the $\rho_i:\hat\Delta_n\to \hat\Delta_n$ are orientation preserving l.v.a. subanalytic parametrizations of the simplices of a finite subanalytic triangulation of $\hat{\Delta}_n$.  
  
With this in mind, we define a map from $(n,b)$-loops, or more generally weak $b$-maps, to the $b$-MD chains of a pointed metric subanalytic germ $((X, x_0, d), p)$:
\begin{equation}\label{eq:domainhurewiczmor}
\zeta_{n, b} : \{\varphi : \varphi \text{ is a weak } b \text{-map from } C(I^n) \text{ to } X \} \to MDC_n^b(X; \mathbb{Z})
\end{equation}
as follows:
 given $\varphi = \{(C_j, f_j)\}_{j \in J}$ be a weak $b$-map from $C(I^n)$ to $X$ we define the map $\zeta$ by the formula
\begin{equation}\label{eq:defhurewiczmor}
\zeta_{n, b} ( \varphi ) := \sum_{k \in K} f_{r(k)} \comp \rho_k
\end{equation}
where $\{\rho_k:\hat{\Delta}_n\to (C(I^n),0)\}_{k \in K}$ is an orientation preserving homological subdivision (recall Definition~18~of~\cite{MDH}) of $C(I^n)$ whose associated triangulation is compatible with $\{C_j\}_{j \in J}$ and  $r(k) \in J$ is such that the image of $\rho_k$ is contained in $C_{r(k)}$ for every $k$.
\begin{lema}\label{lema:hurewiczmap}
Let $(X,x_0,d,\p)$ be a pointed metric subanalytic germ. Let $\zeta_{n, b}$ be as defined above. Then $\zeta_{n, b}$ has the following properties:
\begin{enumerate}
\item The map $\zeta_{n,b}$ is well-defined independent of the choice of the homological subdivision.

\item If $\varphi_1$ and $\varphi_2$ can be concatenated then we have $\zeta_{n,b}( \varphi_1 \cdot \varphi_2)= \zeta_{n,b}(\varphi_1) + \zeta_{n,b}(\varphi_2)$.
\item\label{item:hurewiczmorphisminverse} We have $\zeta_{n,b}(\overleftarrow{\varphi}) = - \zeta_{n,b}(\varphi)$.
\end{enumerate}
\end{lema}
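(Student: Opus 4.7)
The three assertions all rest on managing orientations and subdivisions inside $MDC_n^b(X;\ZZ)$, so the plan is to exploit the homological subdivision relation (Definition~22 of~\cite{MDH}) together with the fact that, since $\varphi$ is a weak $b$-map, on overlaps $C_{j_1}\cap C_{j_2}$ the maps $f_{j_1}$ and $f_{j_2}$ differ only by $b$-equivalence on any point, so simplices built from either map represent the same class in $MDC_n^b$.

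For (1), given two orientation-preserving homological subdivisions $\{\rho_k\}$ and $\{\rho'_{k'}\}$ of $C(I^n)$ both compatible with $\{C_j\}$, I would take a common subanalytic triangulation of $C(I^n)$ refining both (and still compatible with $\{C_j\}$); this exists by standard subanalytic triangulation results. Inside each $\rho_k$, the simplices of the common refinement yield an orientation-preserving subanalytic triangulation of $\hat\Delta_n$, so by the homological subdivision relation applied to $f_{r(k)}\circ\rho_k$, the chain $\sum_k f_{r(k)}\circ\rho_k$ equals the chain built from the common refinement. The same holds for $\{\rho'_{k'}\}$, giving equality of the two original chains. Independence from the choice of label $r(k)$ is immediate: if the image of $\rho_k$ lies in $C_{r}\cap C_{r'}$ then for every point $\q$ in $\hat\Delta_n$ the simplices $f_r\circ\rho_k\circ\q$ and $f_{r'}\circ\rho_k\circ\q$ are $b$-equivalent by the weak $b$-map condition, so $f_r\circ\rho_k\sim_b f_{r'}\circ\rho_k$ in $MDC_n^b$.

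For (2), recall that $\varphi_1\cdot\varphi_2$ has a canonical representative whose closed cells are $\phi_{0,1/2}(C_j^1)$ and $\phi_{1/2,1}(C_\ell^2)$ with maps $f_j^1\comp\phi_{0,1/2}^{-1}$ and $f_\ell^2\comp\phi_{1/2,1}^{-1}$. Choose orientation-preserving homological subdivisions $\{\rho_k^1\}$, $\{\rho_m^2\}$ compatible with the covers of $\varphi_1$ and $\varphi_2$; since $\phi_{0,1/2}$ and $\phi_{1/2,1}$ are orientation-preserving l.v.a.\ homeomorphisms, the union $\{\phi_{0,1/2}\comp\rho_k^1\}\cup\{\phi_{1/2,1}\comp\rho_m^2\}$ is an orientation-preserving homological subdivision of $C(I^n)$ compatible with the concatenation cover. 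Substituting into the definition, the $\phi$'s cancel and we read off $\zeta_{n,b}(\varphi_1\cdot\varphi_2)=\zeta_{n,b}(\varphi_1)+\zeta_{n,b}(\varphi_2)$.

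For (3), let $R\colon C(I^n)\to C(I^n)$ be the reflection $(y_1,y_{2..n},t)\mapsto(1-y_1,y_{2..n},t)$, which is an orientation-reversing l.v.a.\ subanalytic homeomorphism sending $\overleftarrow{C_j}$ to $C_j$ and satisfying $\overleftarrow{f_j}=f_j\comp R$. If $\{\sigma_k\}$ is an orientation-preserving homological subdivision of $C(I^n)$ compatible with $\{\overleftarrow{C_j}\}$ and $\tau$ is any fixed orientation-reversing l.v.a.\ automorphism of $\hat\Delta_n$, then $\{R\comp\sigma_k\comp\tau\}$ is an orientation-preserving homological subdivision compatible with $\{C_j\}$, so $\zeta_{n,b}(\varphi)=\sum f_{r(k)}\comp R\comp\sigma_k\comp\tau$. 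Reparametrizing by the orientation-reversing $\tau$ flips the sign of each simplex class, yielding $\zeta_{n,b}(\varphi)=-\sum f_{r(k)}\comp R\comp\sigma_k=-\sum\overleftarrow{f_{r(k)}}\comp\sigma_k=-\zeta_{n,b}(\overleftarrow\varphi)$. The main obstacle is the orientation bookkeeping in (3): the subdivision relation of~\cite{MDH} is stated only for orientation-preserving reparametrizations, so one must verify at the level of the construction of $MDC_n^b$ that an orientation-reversing reparametrization of a single simplex represents its negative; once this standard fact is in hand, (1) and (2) are essentially combinatorial, and (3) follows as above.
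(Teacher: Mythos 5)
Your proposal is correct and takes essentially the same approach as the paper: the paper's proof is a one-line appeal to the homological subdivision equivalence (plus a citation of Remark~20 of~\cite{MDH} for the orientation-reversing reparametrization fact in part (3)), and your argument is a faithful expansion of exactly that reasoning. The "standard fact" you flag at the end --- that an orientation-reversing reparametrization of a simplex represents its negative in $MDC^b_n$ --- is precisely the content of the cited Remark~20, so you have correctly identified the one external ingredient the paper relies on.
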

\begin{proof}
All the stated properties follow from the homological subdivision equivalence in $MDC_n^b((X, \text{Im}(\p)); \mathbb{Z})$. In particular, for property (\ref{item:hurewiczmorphisminverse}) recall Remark~20 of~\cite{MDH}.
\end{proof}

\begin{prop}[Metric Hurewicz homomorphism]\label{prop:hurewiczmorphism}
Let $((X,x_0,d,\p)$ be a pointed metric subanalytic germ. Let $b \in (0, \infty]$ and $n \in \mathbb{N}$. Then the restriction of $\zeta_{n, b}$ to the space of $(n,b)$-loops induces a homomorphism 
$$
\overline{\zeta_{n, b}}: MD\pi_n^b(X,\p) \to MDH_n^b(X;\mathbb{Z}),
$$
which we call the Hurewicz morphism.
\end{prop}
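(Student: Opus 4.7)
Plan. By Lemma~\ref{lema:hurewiczmap}, the chain-level assignment $\zeta_{n,b}$ is additive under concatenation and sends $\overleftarrow{\varphi}$ to $-\zeta_{n,b}(\varphi)$. Hence the group homomorphism statement of the proposition will follow as soon as $\overline{\zeta_{n,b}}$ is well-defined on $MD\pi_n^b(X,\p)$. The work thus reduces to: (i) for an $(n,b)$-loop $\varphi$, show that $\zeta_{n,b}(\varphi)$ represents a class in $MDH_n^b(X;\ZZ)$; and (ii) show that this class depends only on the equivalence class of $\varphi$ under weak $b$-homotopies relative to $C(\partial I^n\setminus I^{n-1}\times\{1\})$ that preserve the inclusion of $C(\partial I^n)$ into $\text{Im}(\p)$.

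For (i) I would fix a representative $\{(C_j,f_j)\}_{j\in J}$ of $\varphi$ together with a compatible orientation-preserving homological subdivision $\{\rho_k\}_{k\in K}$ of $C(I^n)$. In $\partial \zeta_{n,b}(\varphi)$ the internal faces cancel pairwise as in the classical triangulation argument. The surviving faces all lie over $C(\partial I^n)$; boundary conditions (a)--(b) of Definition~\ref{def:bn-loop} together with the $b$-equivalence relation on MD-chains force each such face simplex to be $b$-equivalent to a simplex whose image lies in $\text{Im}(\p)$. Hence $\partial\zeta_{n,b}(\varphi)$ lies in the image of $MDC_{n-1}^b(\text{Im}(\p);\ZZ)\to MDC_{n-1}^b(X;\ZZ)$, and $\zeta_{n,b}(\varphi)$ defines a class in $MDH_n^b(X,\text{Im}(\p);\ZZ)$. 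For $n=1$ one checks directly that $\partial\zeta_{1,b}(\varphi)=[\varphi|_{C\{1\}}]-[\varphi|_{C\{0\}}]=0$ in $MDC_0^b(X;\ZZ)$, since both endpoints are $b$-equivalent to $\p$. For $n\geq 2$ I would lift the relative class to an absolute class via the long exact sequence of the pair $(X,\text{Im}(\p))$ and the vanishing $MDH_k^b(\text{Im}(\p);\ZZ)=0$ for $k\geq 1$, which comes from exhibiting a Lipschitz l.v.a. deformation retraction of $\text{Im}(\p)$ onto the image of $\p$ restricted to a subarc, and invoking metric homotopy invariance of MD-Homology from~\cite{MDH}.

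For (ii), a weak $b$-homotopy $H:C(I^{n+1})\to X$ from $\varphi_0$ to $\varphi_1$ produces a chain $\zeta_{n+1,b}(H)\in MDC_{n+1}^b(X;\ZZ)$ whose boundary decomposes as $\zeta_{n,b}(\varphi_1)-\zeta_{n,b}(\varphi_0)$ (the end contributions coming from $\iota_1$ and $\iota_0$) plus contributions from the faces $y_i\in\{0,1\}$ with $i\leq n$. The condition that $H$ is relative to $C(\partial I^n\setminus I^{n-1}\times\{1\})$ and preserves the inclusion of $C(\partial I^n)$ into $\text{Im}(\p)$ forces each of these remaining face contributions to be $b$-equivalent to a simplex in $\text{Im}(\p)$. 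Therefore $\zeta_{n,b}(\varphi_1)-\zeta_{n,b}(\varphi_0)$ is a boundary modulo $MDC_n^b(\text{Im}(\p);\ZZ)$, and by the isomorphism used in (i) this gives equality of the associated classes in $MDH_n^b(X;\ZZ)$.

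The main obstacle is the verification of $MDH_k^b(\text{Im}(\p);\ZZ)=0$ for $k\geq 1$. Topologically $\text{Im}(\p)$ is a half-open arc, but its MD-Homology is not automatic from ordinary contractibility because of the nontrivial metric structure at the vertex; the vanishing must be justified by a genuinely metric deformation along $\p$ together with metric homotopy invariance of MD-Homology. The remaining steps are routine bookkeeping on triangulations and on the $b$-equivalence relation.
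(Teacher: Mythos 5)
Your proposal takes essentially the paper's route: pass to the relative complex $MDC_n^b(X,\text{Im}(\p);\ZZ)$, where the boundary conditions on $(n,b)$-loops and homotopies make the cycle and homologous-cycle checks straightforward, and transport back along the isomorphism $MDH_n^b(X;\ZZ)\cong MDH_n^b(X,\text{Im}(\p);\ZZ)$ for $n>0$. The only real difference is that you re-derive that isomorphism from the long exact sequence and the vanishing $MDH_k^b(\text{Im}(\p);\ZZ)=0$ for $k\geq 1$ (flagging the latter as your main obstacle), whereas the paper simply invokes it as already established in Section~3.5 of~\cite{MDH}, so that ``obstacle'' is available off the shelf rather than something you need to re-prove here.
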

\begin{proof}
By Lemma~\ref{lema:hurewiczmap}, if $\overline{\zeta_{n, b}}$ is well-defined, then it is a homomorphism. 

We consider the relative homology complex $MDC_n^b(X, \text{Im}(\p); \mathbb{Z})$ (See section~3.5 of~\cite{MDH}). Since $MDH_n^b(X;\mathbb{Z})\to 
MDH_n^b(X, \text{Im}(\p); \mathbb{Z})$ is an isomorphism for any $n>0$ we can consider the composition of $\zeta_{n, b}$ with the natural projection to $MDC_n^b(X, \text{Im}(\p); \mathbb{Z})$ instead of $\zeta_{n, b}$ itself. Then the image of any $n$-loop in $MD\pi_n^b(X,\p)$ is obviously a cycle, and homotopic $n$-loops give rise to homologous cycles. Moreover given the constant $b$-loop $c_{\p,n}$ we have $\zeta_{n,b}(c_{\p,n})=0$.
\end{proof}

\begin{theo}[Metric Hurewicz isomorphism]
\label{theo:hur}
Let $((X,x_0,d,\p)$ be a pointed metric subanalytic germ. Let $b \in (0, \infty]$. 
If $X$ is $b$-connected then the group $MDH_1^b(X;\ZZ)$ is the abelianization of $MD\pi_1^b(X,\p)$, and the Hurewicz homomorphism is the abelianization map. 

Assume that $MD\pi_1^b(X,\p)$ is trivial and that $d$ is either the inner or the outer metric. If $MD\pi_k^b(X,\p)$ is trivial for $k<n$ then $\overline{\zeta_{n, b}}: MD\pi_n^b(X,\p) \to MDH_n^b(X;\mathbb{Z})$ is a group isomorphism.
\end{theo}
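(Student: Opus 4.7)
\textbf{First assertion ($n=1$).} This part does not use the inner/outer hypothesis and is a direct adaptation of the classical argument, with care needed to handle the discontinuities of weak $b$-maps. Since $MDH_1^b$ is abelian, $\overline{\zeta_{1,b}}$ factors through the abelianization of $MD\pi_1^b(X,\p)$. For surjectivity, given a $b$-homology class represented by a cycle $\sum_i n_i\sigma_i$, we use Proposition~\ref{prop:smallqis} and the subdivision relation to assume all vertices of the $\sigma_i$'s are well-defined points in $X$. By $b$-path connectedness (Lemma~\ref{lema:connect two points}) we choose weak $b$-paths $\gamma_v$ from $\p$ to each vertex $v$, and the concatenation $\prod_i (\gamma_{\sigma_i(0)}\cdot \sigma_i \cdot \overleftarrow{\gamma_{\sigma_i(1)}})^{n_i}$ is a $(1,b)$-loop whose Hurewicz image equals, modulo boundaries in $MDC^b_\bullet$, the original cycle (the extra $\gamma$ segments cancel in pairs because the sum was a cycle). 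For the kernel statement, suppose $\varphi$ is a $(1,b)$-loop with $\zeta_{1,b}(\varphi)=\partial c$ for some $c=\sum n_j\tau_j\in MDC^b_2(X;\ZZ)$. After subdivision, the combinatorial pattern of $\partial c$ realizes $\varphi$ as a product of conjugates of commutators in the $b$-MD fundamental group, exactly as in the topological Hurewicz argument; the extra care is that each $\tau_j$ is a weak $b$-map from $\hat\Delta_2$, so the homotopies built out of it must be verified to have the required $b$-moderately discontinuous form, which follows since each $\tau_j$ is already a continuous l.v.a.\ map on its support.

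\textbf{Second assertion ($n\geq 2$, inner or outer metric).} The plan is to reduce to the classical Hurewicz theorem via the comparison theorems. For the inner metric we first apply the re-embedding trick mentioned in the introduction to reduce to the outer metric case. Then, by the comparison Theorem~\ref{theo:maincomparacion} together with its homological counterpart from \cite{MDH}, there is a topological space $\widehat X^b$ (the limit of $b$-horn neighbourhoods) with natural isomorphisms
\begin{equation*}
MD\pi_k^b(X,\p)\xrightarrow{\ \cong\ }\pi_k(\widehat X^b,*), \qquad
MDH_k^b(X;\ZZ)\xrightarrow{\ \cong\ }H_k(\widehat X^b;\ZZ)
\end{equation*}
for all $k\geq 1$. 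Tracing through the definition of $\zeta_{n,b}$ in~\eqref{eq:defhurewiczmor} and the interpolation procedure used in the proof of Proposition~\ref{prop:directlimit3}, the diagram
\begin{equation*}
\begin{tikzcd}
MD\pi_n^b(X,\p)\ar[r,"\overline{\zeta_{n,b}}"]\ar[d,"\cong"'] & MDH_n^b(X;\ZZ)\ar[d,"\cong"]\\
\pi_n(\widehat X^b,*)\ar[r,"h"] & H_n(\widehat X^b;\ZZ)
\end{tikzcd}
\end{equation*}
commutes, with $h$ the classical Hurewicz map. The hypothesis that $MD\pi_k^b(X,\p)=0$ for $k<n$ translates via the comparison isomorphism into $\pi_k(\widehat X^b,*)=0$ for $k<n$, so the classical Hurewicz isomorphism theorem applied to $\widehat X^b$ yields the claim.

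\textbf{Main obstacle.} The delicate step is the commutativity of the diagram above: one has to show that the interpolation procedure that turns a weak $b$-map $\varphi:C(I^n)\to X$ into a continuous map $I^n\to\widehat X^b$ carries the singular chain $\zeta_{n,b}(\varphi)$ to a singular chain homologous in $\widehat X^b$ to the fundamental class of the interpolated continuous map. This amounts to verifying that the subdivision and interpolation constructions commute up to chain-homotopy, and it is where the restriction to the inner/outer metric enters (through the availability of the comparison theorem). For the first assertion, the analogous subtlety is that all auxiliary homotopies must be constructed as genuine weak $b$-maps, which forces one to check $b$-equivalence of points along every overlap introduced by the concatenation and the $\gamma_v$ insertions.
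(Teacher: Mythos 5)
Your proposal follows essentially the same approach as the paper: for $n=1$ the paper also adapts the classical Hurewicz argument for fundamental groups directly (deferring the details to the thesis), and for $n\geq 2$ the paper likewise re-embeds in the inner case to reduce to the outer metric, then invokes Theorem~\ref{theo:maincomparacion} together with its homological counterpart and a compatibility check (which the paper calls ``routine checking'') of the comparison isomorphisms with the respective Hurewicz homomorphisms. Your identification of the diagram-commutativity as the delicate step is exactly the content of that routine checking, so the proposal is a faithful expansion of the paper's argument.
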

\begin{proof}
The proof of the fundamental group part is an adaptation of the usual topological proof (see~\cite{Hatcher}) to our setting, and it involves no new ideas. The reader interested in the details may consult~\cite{thesis}. The higher homotopy groups case is harder and is provided at the end of Section~\ref{sec:hurew}. The proof in Section~\ref{sec:hurew} also contains the fundamental group statement for the inner and the outer metrics.

\end{proof}

\section{The metric Seifert van Kampen Theorem for the MD-Fundamental groupoid.}\label{sec:svk}

\subsection{The MD Fundamental Groupoid.}\label{sec:MDgroupoid}

Along this section we use the following terminology to study a metric subanalytic germ $(X,0,d)$. 
Given two $b$-points $\p$ and $\q$ in $X$, a $b$-path from $\p$ to $\q$ is a weak $b$-maps $\gamma: C(I)\to (X,0)$ such that $\gamma(0,\_)\sim_b \p$ and $\gamma(1,\_)\sim_b\q$. Two $b$-paths $\gamma$ and $\beta$ from $\p$ to $\q$ are $b$-homotopy equivalent if there exists a homotopy $\eta:C(I\times I)\to X$ relative to the extremes  $C(\{0,1\}\times I)$ such that $\eta|_{C(I\times \{0\})}$ is $b$-equivalent to $\gamma$ and $\eta|_{C(I\times \{1\})\}}$ is $b$-equivalent to $\beta$. 

\begin{definition}\label{def:MDgroupoid} Let $(X,0,d)$ be a metric subanalytic germ. The $b$-MD Fundamental Groupoid of $X$  is  the category, denoted by $MD\pi_1^b(X,d)$ or $MD\pi^b_1(X)$, whose objects are the $b$-points of $(X,0)$ and whose morphisms from $\p$ to $\q$ are the $b$-homotopy classes of $b$-paths from $\p$ to $\q$ relative to the extremes.
    
Let $\fP$ be a subset of  $b$-points in $X$. We denote by $MD\pi_1^b(X,d,\fP)$ or $MD\pi_1^b(X,\fP)$ the full subcategory of $MD\pi_1^b(X)$ whose set of objects is $\fP$. 
\end{definition}

Composition in $MD\pi_1^b(X,\fP)$ is given by concatenation (see Definition \ref{def: concatenation}) and every morphism is invertible: the inverse of the $b$-path class of $\gamma$ has $\overleftarrow{\gamma}$ as a representative (see Notation \ref{not:inv}). When $\fP$ is a single point $\{\p\}$ we recover the $b$-MD  fundamental group (see Proposition  \ref{prop:nb-Pi}).

Analogously to Proposition \ref{prop:functb-maps} we have the following:
\begin{prop} For every $b\in (0,+\infty)$, the $b$-MD Fundamental Grupoid $MD\pi^b_1$ is a functor from the category of metric subanalytic germs (with both Lipschitz l.v.a. maps as morphism or $b$-maps) to the category of groupoids. Moreover, for $b\geq b'$ there are groupoid morphisms, that we denote as in (\ref{eq:eta_bb'})
\begin{equation}\label{eq:eta_bb'_GP}
\eta_{b, b'}:MD\pi^{b}_1(X, \fP) \to MD\pi^{b'}_1(X, \fP).\end{equation} 
So, we have the  $b$-MD Fundamental Groupoid $MD\pi^b_1$ as a functor from the category of metric subanalytic germs (with both Lipschitz l.v.a. maps as morphism or $b$-maps) to the category $\BB-\calG\calP$. 
\end{prop}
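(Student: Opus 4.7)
The plan is to verify in turn that $MD\pi_1^b(X,\fP)$ is a groupoid, that the assignment $X \mapsto MD\pi_1^b(X,\fP)$ is functorial with respect to both Lipschitz l.v.a. maps and $b$-maps, and finally that the collection of these functors is itself functorial in $b$ via the comparison morphisms $\eta_{b,b'}$.

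First I would check the groupoid axioms. The objects are $b$-points and the morphisms are $b$-homotopy classes of $b$-paths, composed by concatenation as in Definition~\ref{def: concatenation}; the identity at a $b$-point $\p$ is represented by the constant weak $b$-map $c_{\p,1}$ (Notation~\ref{not:const_loop}). The fact that concatenation of $b$-paths is well-defined on $b$-homotopy classes relative to the endpoints, together with associativity and the existence of identities, follows by the same reparametrization arguments used in Proposition~\ref{prop:nb-Pi}, applied to $b$-paths rather than $(1,b)$-loops; the only point worth emphasizing is that the gluing condition (\ref{eq:comp}) is automatic between $b$-paths whose endpoint $b$-points agree, by Remark~\ref{rem:glueweak}. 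Invertibility of every morphism is exactly Lemma~\ref{lemma: existence inverse element}, which provides a weak $b$-homotopy from $\gamma\cdot\overleftarrow{\gamma}$ (and $\overleftarrow{\gamma}\cdot\gamma$) to a constant weak $b$-map relative to $C(\partial I)$.

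Next I would verify functoriality. Given a morphism $f$ in either category (Lipschitz l.v.a. subanalytic map, or $b$-map) $f\colon (X,d)\to (X',d')$, define $f_\ast$ on objects by sending a $b$-point $\p$ to the class of $f\comp \p$; this is a $b$-point because $b$-maps (and a fortiori Lipschitz l.v.a. maps, cf.\ the proof of Proposition~\ref{prop:functb-maps}) preserve $b$-equivalence of points. On morphisms, send the class of a $b$-path $\gamma$ to the class of $f\comp\gamma$, where the composition is the one of Definition~\ref{def:comp_weak}. The key verifications are that (i) $f\comp\gamma$ is again a weak $b$-map, which is part of Definition~\ref{def:comp_weak}, (ii) if $H$ is a weak $b$-homotopy relative to $C(\partial I)$ between two $b$-paths $\gamma_1,\gamma_2$ with matched endpoints, then $f\comp H$ is again such a homotopy, which follows by applying the $b$-preservation property of $f$ pointwise, and (iii) $f_\ast$ preserves concatenation and hence identities and inverses, because composition of $\phi_{\alpha_1,\alpha_2}$-style reparametrizations with $f\comp(\cdot)$ commutes by construction. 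Functoriality under composition of maps is immediate from the associativity $(g\comp f)\comp\gamma=g\comp(f\comp\gamma)$ of Definition~\ref{def:comp_weak}.

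Finally, to produce the $\BB$-groupoid structure, I would define $\eta_{b,b'}$ for $b\geq b'$ as the identity on representatives: by Remark~\ref{rem:b'weak} every weak $b$-map is a weak $b'$-map and every $b$-equivalence of points is a $b'$-equivalence, so a $b$-path descends to a $b'$-path and a $b$-homotopy to a $b'$-homotopy; this shows $\eta_{b,b'}$ is well-defined on morphisms, and it preserves concatenation for the same reason. The cocycle condition $\eta_{b',b''}\comp\eta_{b,b'}=\eta_{b,b''}$ for $b\geq b'\geq b''$ is tautological. Naturality of $\eta_{b,b'}$ with respect to morphisms $f$ in either category is again tautological: $f_\ast$ is defined by the same formula $\gamma\mapsto f\comp\gamma$ regardless of the parameter $b$.

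I expect no deep obstacle in this proof: the statement is, as the authors indicate, entirely analogous to Proposition~\ref{prop:functb-maps}, and the only subtlety is bookkeeping with the composition rule of Definition~\ref{def:comp_weak} so that the $b$-equivalence of points is respected when composing with a $b$-map on the left and a weak $b$-map on the right. This is precisely the compatibility built into that definition.
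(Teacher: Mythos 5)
Your proof is correct and takes essentially the same route the paper sketches: the paper gives no written proof for this proposition, simply noting it is analogous to Proposition~\ref{prop:functb-maps}, whose own one-line proof rests on the fact that weak $b$-maps compose with $b$-maps (Definition~\ref{def:comp_weak}). Your elaboration of the groupoid axioms via Proposition~\ref{prop:nb-Pi} and Lemma~\ref{lemma: existence inverse element}, of functoriality via Definition~\ref{def:comp_weak}, and of the $\BB$-structure via Remark~\ref{rem:b'weak} is exactly the content the authors have in mind.
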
 

We state a basic observation that will be used later: 

\begin{lema}\label{lem:tec_conc} Let $(X,0,d)$ be a metric subanalytic germ. Let $\fP$ be a set of $b$-points of $X$. Let $\gamma:C(I)\to X$ be a $b$-path with extremes in points of $\fP$. Then, given any subanalytic homeomorphism $h:C(I) \to C(I)$ which resticts to the identity in $C(\{0\})$ and $C(\{1\}$ we have that $[\gamma\circ h]=[\gamma]$ in $MD\pi_1^b(X,\fP)$.
\end{lema}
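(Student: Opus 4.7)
The plan is to show that $\gamma$ and $\gamma \circ h$ are weak $b$-homotopic relative to $C(\partial I)$, using Example~\ref{rem:topological isotopy} as the bridge. This reduces the task to constructing a subanalytic continuous isotopy $\eta : C(I) \times I \to C(I)$ with $\eta_0 = \mathrm{id}_{C(I)}$ and $\eta_1 = h$, satisfying a uniform l.v.a. constant for the family $\{\eta_s\}_{s \in I}$, and such that each $\eta_s$ is the identity on $C(\{0\}) \cup C(\{1\})$.

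For the construction, I would work in the cone coordinates $(y, t) \in [0,1] \times (0, \epsilon)$ on $C(I)$ (with the convention that $(y,t)$ stands for $(yt, t) \in \RR^2$) and write $h(y, t) = (h_1(y, t), h_2(y, t))$. Since $h$ is an l.v.a. subanalytic homeomorphism fixing the two bounding rays (l.v.a.-ness of $h$ is implicit in the hypothesis, as otherwise $\gamma \circ h$ would not define a weak $b$-map via Definition~\ref{def:comp_weak}), there exists $K \geq 1$ with $K^{-1} t \leq h_2(y, t) \leq K t$. Define
\[
\eta\bigl((y,t), s\bigr) := \bigl((1-s)\, y + s\, h_1(y,t),\ (1-s)\, t + s\, h_2(y,t)\bigr).
\]
This map is subanalytic, continuous, and takes values in $C(I)$ (both components remain in $[0,1]$ and $(0,\epsilon)$ respectively by convexity). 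It interpolates between the identity and $h$, and each $\eta_s$ fixes $C(\{0\}) \cup C(\{1\})$ pointwise because $h$ does. Its $t$-coordinate is a convex combination of $t$ and $h_2(y,t)$, hence lies between $K^{-1}t$ and $Kt$ uniformly in $s$, yielding the desired uniform l.v.a. estimate.

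Applying Example~\ref{rem:topological isotopy} with $n = 1$, I obtain $\hat{\eta}(y_1, y_2, t) := \eta((y_1, t), y_2)$, and $H := \gamma \circ \hat{\eta}$ is a weak $b$-homotopy from $\gamma$ to $\gamma \circ h$. To verify that $H$ is relative to $C(\partial I)$, let $\q$ be a point in $\rho^{-1}(C(\partial I))$, where $\rho(y_1, y_2, t) := (y_1, t)$. Since $\eta_s$ fixes $C(\partial I)$ pointwise for every $s$, one has $\hat{\eta} \circ \q = \rho \circ \q$ as points, hence $H \circ \q = \gamma \circ \rho \circ \q$, satisfying the relative-homotopy condition of Definition~\ref{def:weakhomotopy}; in particular, the extremes in $\fP$ are preserved. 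This gives $[\gamma \circ h] = [\gamma]$ in $MD\pi_1^b(X, \fP)$. The main obstacle is the uniform l.v.a. bound for the interpolating family; this is where the l.v.a. character of $h$ is essential, and the straight-line interpolation in the cone coordinates $(y,t)$ is precisely designed to make this bound transparent.
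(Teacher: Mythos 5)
Your proof is correct and follows essentially the same line as the paper's one-line argument: a straight-line interpolation from $\mathrm{id}_{C(I)}$ to $h$, composed with $\gamma$, with the uniform l.v.a.\ bound supplied by Example~\ref{rem:topological isotopy}. The only cosmetic difference is that the paper interpolates in the ambient $\RR^2$-coordinates (using convexity of $C(I)\subset\RR^2$) while you interpolate in the cone coordinates $(y,t)$; both preserve $C(I)$, fix $C(\partial I)$, and produce the same radial component $(1-s)t + s\,h_2$, so the rest of the verification is identical.
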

\begin{proof} 
Consider the homotopy $H:C(I)\times I\to C(I)$ defined by $H(yt,t,s):=(1-s)(yt,t)+s h(yt,t)$ and compose it with $\gamma$.  
\end{proof}

\subsection{The metric Seifert- van Kampen Theorem.}\label{subsec:svk***}
In this section we consider a covering $\{U_i\}_i$ of a metric subanalytic germ $(X,0,d)$. We will always considered the subsets $U_i$ with metrics $d_i$ where $d_i$ are defined by one of the following cases:
\begin{itemize}
\item[(a)] either the $d_i$'s are the restriction metrics $d|_{U_i}$
\item[(b)] or the $d_i$'s are the inner metrics in $U_i$ induced by the infimum of lengths of rectifiable paths in $(X,d)$. A particular case is when $d$ is the outer or inner metric in $X$ induced by the euclidean metric in $\RR^n$ and then the $d_i$'s are the inner metrics induced in every $U_i$.
\end{itemize} 

\begin{definition}
\label{def:condition*}
Let $\calU=\{U_i\}_{i\in I}$ be a finite cover of a metric subanalytic germ $(X,0,d)$ by subanalytic open subsets $U_i$ endowed with metrics $d_i$ (all either of type (a) o (b) above). Let $\fP$ be a set of $b$-points in $X$. The pair $(\calU,\fP)$ satisfies condition  

\begin{itemize}
\item [$(*)_b$] if for every $b$-equivalent points $\q_i, \q_j$ and $\q_k$ in $U_i$, $U_j$ and $U_k$ respectively  (where $i$, $j$ and $k$ may coincide), there exist 

\begin{itemize}
\item a point $\p$ in $\fP$, 
\item  a $b$-path $\delta_v:C(I)\to (U_v,d_v)$   joining $\q_v$ with  $\p$ for every $v=i,j,k$   , 
\item  weak $b$-homotopies $\mu_{vw}:C(I^2)\to (U_v,d_v)$ for every  $v,\ w\in\{i,j,k\}$ relative to the extremes $\p$ and $\q_v$, with $\mu_{vw}|_{C(I\times \{0\})}\sim_b \delta_v$ in $(U_v,d_v)$ and such that for every $v,w=i,j,k$ the $b$-paths $\mu_{vw}|_{C(I\times\{1\})}$ and $\mu_{wv}|_{C(I\times\{1\})}$ are $b$-equivalent as $b$-maps in $(X,d)$. 
\end{itemize}
\end{itemize}
\end{definition}


\begin{notation}
Given a metric subanalytic germ $(X,0,d)$,  a subanalytic subgerm $U\subset X$ and  a set $\fP$ of $b$-points in $X$ we denote by $\fP\cap U$ the subset of points in $\fP$ which are $b$-equivalent in $(X,d)$ to a point in $U$. Then we can consider $\pi_1^b(U,\fP\cap U)$ which we will denote simply by $\pi_1^b(U,\fP)$.  Given $V\subset U\subset X$ considered with metrics $d_V$ and $d_U$ of the same type with respect to Definition \ref{def:condition*}, functoriality gives rise to a morphism of groupoids
$$\iota_{V,U}: MD\pi_1^b(V,\fP)\to \pi_1^b(U,\fP).$$
\end{notation}

\begin{definition}
\label{def:propdagger}
Let $(X,0,d)$ be a metric subanalytic germ, $\calU=\{U_i\}_{i\in I}$ a finite cover and $\fP$ a set of $b$-points in $X$.
A groupoid $K$ and a collection of groupoid morphisms $a_i:MD\pi_1^b(U_i,\fP)\to K$ with $i\in I$ satisfy property 
\begin{itemize}
\item[$(\dagger)_b$]
if for any $[\gamma_i]\in Mor_{MD\pi^b_1(U_i,\fP)}(\p,\q)$ and $[\gamma_j]\in Mor_{MD\pi^b_1(U_j,\fP)}(\p,\q)$ 
having representatives $\gamma_i$ en $U_i$ and $\gamma_j$ en $U_j$ which are equivalent as $b$-maps in $X$ we have the equality $a_i([\gamma_i])=a_j([\gamma_j)]$.
\end{itemize}
\end{definition}

\begin{theo}[A universal property characterizing the MD-fundamental groupoid]
\label{theo:1stuniversal}
Let $(X,0,d)$ be a metric subanalytic germ and $(\calU,\fP)$ be a finite cover and a set of $b$-points in $X$ satisfying condition $(*)$. 
There is a unique (up to groupoid isomorphism) groupoid $L$ and  groupoid morphisms $ b_i:MD\pi_1(U_i,\fP)\to L$ satisfying property $(\dagger)_b$ such that for any other groupoid $K$ and groupoid morphisms $\kappa_i:MD\pi_1^b(U_i,\fP)\to K$ with the property $(\dagger)_b$ there exists a unique groupoid morphism $\kappa:L\to K$ such that $\kappa\comp b_i=\kappa_i$ for any $i\in I$. Moreover 
$MD\pi_1^b(X,\fP)$ and the  morphisms $\iota_{U_i,X}$ coincide with $L$ and $b_i$ for any $i\in I$.

In other words and informally: the fundamental groupoid $MD\pi_1^b(X,\fP)$ is the unique initial object of the category of groupoids having property $(\dagger)_b$.
\end{theo}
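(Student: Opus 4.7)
Uniqueness of $L$ up to groupoid isomorphism is automatic, since an initial object in any category is unique up to unique isomorphism. The content of the theorem is therefore to verify that the pair $(MD\pi_1^b(X,\fP),\{\iota_{U_i,X}\})$ itself is such an initial object. Property $(\dagger)_b$ for this pair is essentially tautological: if representatives $\gamma_i\subset U_i$ and $\gamma_j\subset U_j$ of classes in $MD\pi_1^b(U_i,\fP)$ and $MD\pi_1^b(U_j,\fP)$ are $b$-equivalent as weak $b$-maps in $X$, they determine the same $b$-homotopy class in $X$, so $\iota_{U_i,X}([\gamma_i])=\iota_{U_j,X}([\gamma_j])$.

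\textbf{Construction of $\kappa$.} Given a groupoid $K$ and morphisms $\kappa_i$ satisfying $(\dagger)_b$, I would define $\kappa$ on objects by $\kappa(\p):=\kappa_i(\p)$ for any $i$ with $\p\in\fP\cap U_i$; independence of $i$ follows from $(\dagger)_b$ applied to the identity class at $\p$. For a morphism, represent it by a weak $b$-map $\gamma:C(I)\to X$ with cover $\{C_j\}$. Using subanalytic triangulation and the openness of the $U_i$, refine $\{C_j\}$ so each piece factors through some $U_{i(j)}$, and then subdivide $I$ into consecutive subintervals $[t_{k-1},t_k]$ along which $\gamma$ lives in a single $U_{i_k}$. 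At each breakpoint $t_k$ the restriction $\gamma|_{C(\{t_k\})}$ yields a $b$-point $\q_k$ lying simultaneously in $U_{i_k}$ and $U_{i_{k+1}}$; $(*)_b$ produces a $\p_k\in\fP$ and $b$-paths $\delta_k^-\subset U_{i_k}$, $\delta_k^+\subset U_{i_{k+1}}$ from $\q_k$ to $\p_k$. Inserting trivial cancellations $\delta_k^\pm\cdot\overleftarrow{\delta_k^\pm}$ using Lemma~\ref{lemma: existence inverse element}, rewrite $[\gamma]$ as a concatenation of classes $[\alpha_k]\in MD\pi_1^b(U_{i_k},\fP)$ with endpoints in $\fP$, and set $\kappa([\gamma]):=\kappa_{i_m}([\alpha_m])\circ\cdots\circ\kappa_{i_1}([\alpha_1])$.

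\textbf{Well-definedness, functoriality, uniqueness.} Invariance under change of subdivision reduces, via common refinement, to inserting one new breakpoint; the two decompositions then differ by a $\delta\cdot\overleftarrow{\delta}$ segment inside one $U_i$, which is null-homotopic in $MD\pi_1^b(U_i,\fP)$ by Lemma~\ref{lemma: existence inverse element} and hence killed by $\kappa_i$. Independence from the choices supplied by $(*)_b$ uses the $\mu_{vw}$-homotopies: inside each $U_v$ they supply $b$-homotopies between competing $\delta$'s (mapped to identity $2$-cells by $\kappa_v$), while the $b$-equivalence of $\mu_{vw}|_{C(I\times\{1\})}$ and $\mu_{wv}|_{C(I\times\{1\})}$ in $X$, together with $(\dagger)_b$, matches endpoints across different $U_v$'s. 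Functoriality and the relations $\kappa\circ\iota_{U_i,X}=\kappa_i$ follow by taking the trivial one-piece subdivision, and uniqueness of $\kappa$ is forced because every morphism of $MD\pi_1^b(X,\fP)$ is, by construction, a composition of images under the $\iota_{U_i,X}$.

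\textbf{Main obstacle.} The hardest step is $b$-homotopy invariance of $\kappa$: given $\eta:C(I^2)\to X$ realising a $b$-homotopy between two $b$-paths, one must decompose $I^2$ into a subanalytic triangulation whose cells each factor (up to $b$-equivalence) through a single $U_{i(\Delta)}$, assign coherent basepoints in $\fP$ at each vertex of the triangulation, and fill each $2$-cell with $(*)_b$-type $\mu$-homotopies into a composable $2$-diagram in $K$ whose boundary recovers both one-dimensional path-reductions. Condition $(*)_b$ is formulated for triples of points precisely so that it can be applied at a vertex where three cells meet, and a suitable choice of triangulation reduces every local configuration to this case; assembling the local $2$-cells coherently along edges and vertices, and showing that the resulting value in $K$ agrees with the reductions on the top and bottom edges of the homotopy, is the delicate technical core of the argument.
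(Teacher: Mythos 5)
Your proposal follows essentially the same strategy as the paper: construct $\kappa$ on objects via $(\dagger)_b$ on constant loops, on morphisms by subdividing $I$ so that pieces land in single $U_{i_k}$ and using $(*)_b$ to move breakpoints into $\fP$, and then verify well-definedness by decomposing a $b$-homotopy $\eta:C(I^2)\to X$ and invoking $(*)_b$ at triple points. You also correctly locate the difficulty: the homotopy-invariance step.

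There is, however, one concrete gap in your sketch of that step. You assert that ``a suitable choice of triangulation reduces every local configuration'' to the case where only three cells meet at a point. That is not achievable by choosing a triangulation: in a simplicial triangulation of $I^2$ refining a prescribed covering, interior vertices generically have valency $\geq 4$, and you cannot lower the valency by re-triangulating without destroying compatibility with the given cover. Condition $(*)_b$ is stated only for triples of $b$-equivalent points, so at a vertex where $m>3$ cells meet you cannot apply it directly. The paper resolves this by abandoning the requirement that the decomposition be simplicial: around each high-valency vertex $v$ it inserts a small convex $m$-gon $G_v$, passes to the resulting \emph{polygonal} decomposition $\calP$ of $I^2$ (whose vertices are all trivalent), and composes $\widetilde{\eta}$ with an explicit collapse map $\tau_2:C(\calP)\to C(|K|)$ that contracts each $C(G_v)$ onto the line over $v$. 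Only after this modification (and a further modification sending trivalent meeting points to $\fP$ using $(*)_b$) does the Hatcher-style sweep along the polygonal lines $B_n$ go through. Without the $G_v$/$\tau_2$ construction your reduction to triples does not hold, so this is the missing idea rather than a routine choice. The rest of your argument (uniqueness, $(\dagger)_b$ for the pair $(MD\pi_1^b(X,\fP),\iota_{U_i,X})$, independence under subdivision refinement and under the $(*)_b$-choices via the $\mu_{vw}$ homotopies) matches the paper.
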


\begin{proof}
Given the existence, the uniqueness is obvious. It is also clear that $MD\pi_1^b(X,\fP)$ and $\iota_{U_i,X}$ satisfy property $(\dagger)$. Consider a groupoid $K$ and groupoid morphisms $\kappa_i:MD\pi_1^b(U_i,\fP)\to K$ with property $(\dagger)$. We have to find the predicted groupoid morphism $\kappa:MD\pi_1^b(X,\fP)\to K$ and to prove that $\kappa$ is unique. 

The definition and uniqueness of $\kappa$ at the level of objects follows from condition $(\dagger)$ applied to the constant loops (that are the unit morphisms in the fundamental groupoid), the bijection between objects and unit elements of the isotropy groups of a groupoid and the fact that groupoid morphisms preserve unit elements.

In the rest of the proof we prove that there is a unique possible definition of $\kappa$ at the level of morphisms. We adapt the line of the proof of Seifert-van Kampen for groupoids in \cite{BR} (see also \cite{Hatcher} for the fundamental group case) in the topological setting, with some additional non-trivial arguments. 

Given  $\p,\q\in \fP$ we define the map $\kappa:Mor_{MD\pi_1^b(X,\fP)}(\p,\q)\to Mor_{K}(\kappa(\p),\kappa(\q))$. Given an element $[\gamma]\in Mor_{MD\pi_1^b(X,\fP)}(\p,\q)$, if  
\begin{equation}\label{eq:concat}[\gamma]=[\gamma_{1}]\centerdot...\centerdot [\gamma_{r}]
\end{equation} where
$[\gamma_l]$ are 
elements of $Mor_{MD\pi_1^b(U_{i_l},\fP)}(\p_{l},\p_{l+1})$ for certain $i_l\in I$ and $\p_l$, $\p_{l+1}\in \fP$, then necessarily  
\begin{equation}\label{eq:kappa}\kappa([\gamma]):=\kappa_{i_1}([\gamma_{1}])\centerdot...\centerdot \kappa_{i_r}([\gamma_{r}]).
\end{equation}
Notice that if any $[\gamma_l]$ 
has a $b$-map representative $\gamma_l'$ with image contained in a different $U_{i_l'}$, then  property $(\dagger)$ of the system of morphisms $\{\kappa_i\}_{i\in I}$ shows
the equality $\kappa_{i_l}([\gamma_{l}])=\kappa_{i_l'}([\gamma_{l}'])$ so the value obtained for $\kappa([\gamma])$   is independent of the choice of the subset $U_{i_l}$. 

Notice that any $b$-loop $[\gamma]$ admits an expression as (\ref{eq:concat}). Indeed, let $\{(C_j,f_j)\}_{j\in J}$ be a representative of $\gamma$ such that such that for each $j$ there is a $i_j$ such that $f_j(C_j)\subset U_{i_j}$ (this can be achieved by taking the refinement $\{C_j\cap f_j^{-1}(U_i)\}_{i,j}$ of any covering $\{C_j\}_j$). By the existence of subanalytic triangulations and Remark~\ref{rem:lvareduction} there is a triangulation $h:|K|\to C(I)$ compatible with the decomposition by $\{C_j\}_j\in J$ and preserving $t$-levels $C_i$. Applying Lemma~\ref{lem:tec_conc} we have that $\gamma\circ h$ is a representative of the homotopy class $[\gamma]$, and it is clear that it is a concatenation of $b$-paths $\gamma_0\cdot...\gamma_r$, each of which has image contained in a subset $U_i$ of the cover. Now we modify each $\gamma_{j}$ so that its starting and ending points belong to $\fP$. Since the ending point $\frak{e}_j$ of $\gamma_{j}$ and the starting point $\frak{s}_{j+1}$ of $\gamma_{j+1}$ coincide as $b$-points in $X$, by condition $(*)$ there exists a point $\p_{j+1}\in \fP$, $b$-paths $\delta_{j+1}^0$ and $\delta_{j+1}^1$ connecting $\frak{e}_j$ with $\p_{j+1}$ inside $U_{i_j}$ and $\frak{s}_{j+1}$ with $\p_{j+1}$ inside $U_{i_{j+1}}$ respectively, and so that  $\delta_{j+1}^0\cdot\overleftarrow{\delta_{j+1}^1}$ is weak $b$-homotopic to a constant $b$-path. Then we have 
$$[\gamma]=[\gamma_0\cdot\delta_1^0]\cdot [\overleftarrow{\delta_1^1}\cdot\gamma_1\cdot\delta_2^0]\cdot...\cdot[\overleftarrow{\delta_r^1}\cdot\gamma_r],$$
which is the needed expression of the form (\ref{eq:concat}).

In order to finish the proof, we have to see that for any two expressions of an element $[\gamma]$ as in $(\ref{eq:concat})$, the values $\kappa([\gamma])$ induced as in (\ref{eq:kappa}) are the same: as a conclusion, we get that $\kappa$ is well defined.

Consider two expressions $[\gamma]=[\gamma_1\cdot...\cdot \gamma_r]=[\gamma'_1\cdot...\cdot \gamma'_{r'}]$ as in $(\ref{eq:concat})$.
Let $\eta$ be a $b$-homotopy connecting $\gamma_1\cdot...\cdot \gamma_r$ and $\gamma'_1\cdot...\cdot \gamma'_{r'}$ relative to the extremes, given by a representative $\{(D_k, g_k)\}_{k\in A}$ with the $D_k\cap D_l$ with empty interior and such that $g_l(D_l)\subset U_{i(l)}$ for certain $i(l)\in I$. Since we have the $b$-equivalences 
$\eta|_{C(I\times\{0\})}\sim_b \gamma_1\cdot...\cdot \gamma_r$ and 
$\eta|_{C(I\times\{1\})}\sim_b \gamma'_1\cdot...\cdot \gamma'_{r'},$
after a refinement of the partition, we may assume that $\{(D_k, g_k)\}_{k\in A}$ refines the decompositions induced in $C(I\times\{0\})$ and $C(I\times\{1\})$ by the concatenation expressions. Next we will  modify the homotopy $\eta$ and the decomposition $\{(D_k, g_k)\}_{k\in A}$ in several steps so that
\begin{enumerate}
 \item[(i)] the $D_k$ are cones over convex polygons in $I_2$ that only intersect along faces,
 \item[(ii)] no more than three $D_k$ meet in a point.
\end{enumerate}
The condition (ii) is crucial because we will need to apply property (*), which is formulated for coverings allowing only $3$-fold intersections. The first condition is very convenient to easily express certain restrictions of the homotopy (over the faces of the $D_k$'s) as a concatenation of paths.

We consider a triangulation $\tau:C(|K|)\to C(I^2)$ where $K$ is a simplicial complex (and $|K|=I^2$), adapted to the natural strata of $C(I^2)$ and to the $\{D_{k}\}_k$ and that preserves $t$-levels. Then, the homotopy $\widetilde{\eta}:=\eta\circ\tau$ is a weak $b$-homotopy between the concatenations of reparametrizations $\widetilde{\gamma}_i$ of the $b$-paths $\gamma_i$ and $\widetilde{\gamma}'_i$ of $\gamma'_i$. Let $U_{j(i)}$ be a subset of the cover containing the image of $\gamma_i$. By Lemma~\ref{lem:tec_conc} the path $[\gamma_i]$ represents the same element in $MD\pi_1^b(U_{j(i)},\fP)$ than $\widetilde{\gamma}_i$. So, by $(\dagger)$ we have 
\begin{equation}\label{eq:tilde}\kappa[\gamma_1\cdot \gamma_2\cdot...\cdot \gamma_r]=\kappa[\widetilde{\gamma}_1\cdot...\cdot \widetilde{\gamma}_r]\ \ \text{and}\ \ \kappa[\gamma'_1\cdot ...\cdot \gamma'_{r'}]=\kappa[\widetilde{\gamma}'_1\cdot ...\cdot \widetilde{\gamma}'_{r'}].
\end{equation}

So, we have to check the equality of the two second terms of each of the previous equalities, and we can work with the homotopy $\widetilde{\eta}$. We denote by $\{T_l\}_{l}$ the family of 2-dimensional simplices of $K$, which decompose $|K|=I^2$.

In case there are vertices in $K$ that meet more than three 2-dimensional simplices, we do the following modifications of the partition $\{T_l\}_l$ of $I^2$ and of $\widetilde{\eta}$.
For every vertex $v$ of valency $m>3$ we consider a convex $m$-gonal piece $G_v$ (the union of $m$ small triangular pieces each of them inside one of the adjacent 2-dimensional simplices) as in Figure \ref{fig:polygon}. We choose them small enough so that they are disjoint as in the figure. We consider the partition $\calP$ of $I^2$ given by  $\{G_v\}_{val(v)>3}$ and $\{\widetilde{T}_i\}_i$ with $\widetilde{T}_i:=T_i\setminus \cup_v G_v$. We consider the 
conical partition $C(\calP)$ induced on $C(I^2)$. We now modify the homotopy $\widetilde{\eta}$. Consider the continuous mapping $\tau_2:\calP\to |K|$ as follows: 
\begin{itemize}
\item the restriction to every $G_v$ collapses, 
to $v$
\item it is the identity in the rest of the vertices,
\item the restriction to every $\widetilde{T_i}$ has image $T_i$ and is any subanalytic map satisfying the following conditions: it is a homeomorphism of the interiors and at the boundary it interpolates affinely the definition provided in previous 2 items. 
\end{itemize}

\begin{figure}
\includegraphics[scale=0.2]{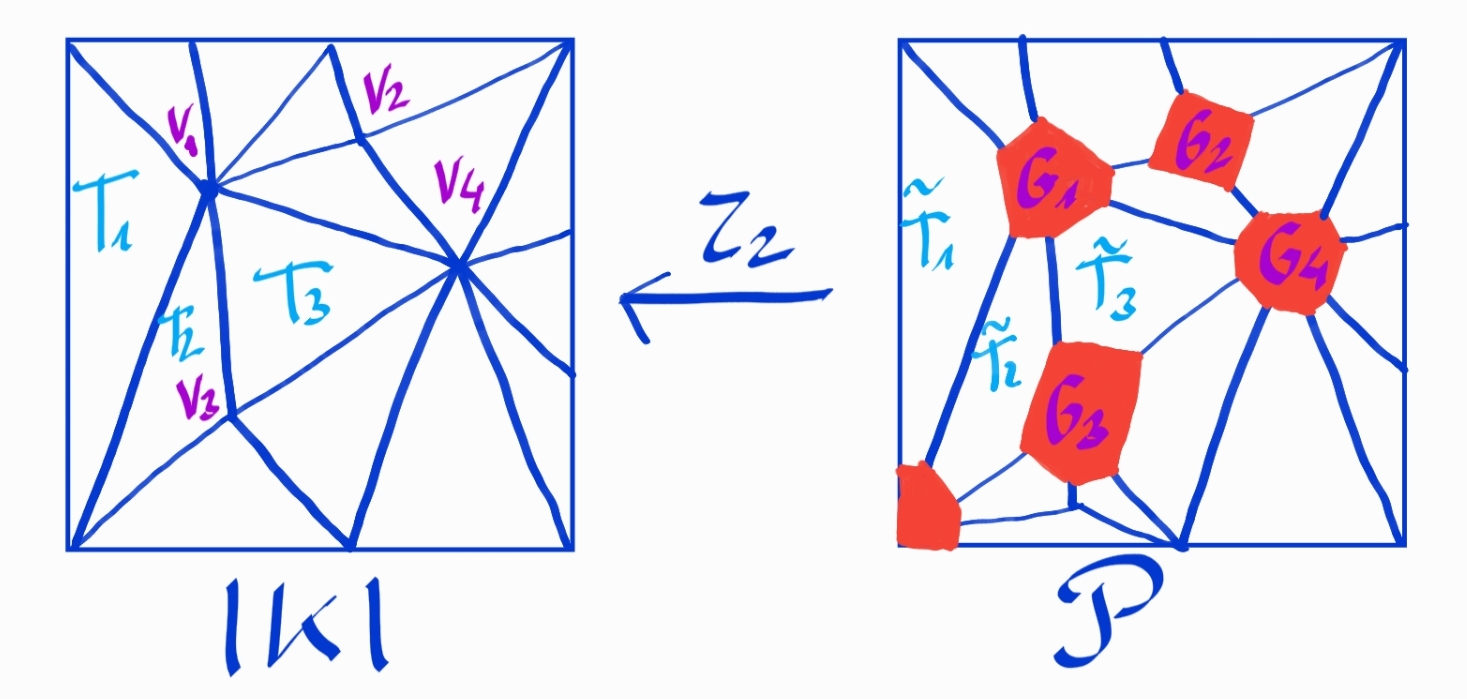}
\caption{The refinement $\calP$ of the partition $|K|$ of $I^2$. }\label{fig:polygon}
\end{figure}

We also denote by $\tau_2$ the induced conical map from $C(\calP)$ to $C(|K|)$ given by $(y,t)\mapsto (\tau_2(y),t)$. We define a $b$-homotopy $\xi:=\widetilde{\eta}\circ\tau_2$ as follows. For every region $C(\widetilde{T}_i)$ there is a $D_{k}$ such that $\tau(C(T_i))\subset D_k$. Then we define $\xi|_{C(\widetilde{T}_i\})}:=g_k\comp\tau\comp\tau_2$. At a piece $C(G_v)$ there are several $D_{k}$'s such that $\tau_2(C(G_v))\subset D_k$. Choose one of them and define $\xi:=g_k\comp\tau\comp\tau_2$ there. The reader may check easily that we obtain a $b$-map by this procedure. By abuse of notation we denote also by $\{D_k\}_k$ its associated covering $\{C(\widetilde{T}_i)\}_i\cup \{C(G_v)\}$.

The replacement of the homotopy $\widetilde{\eta}$ by $\xi$ performed above induces a replacement of the concatenations
$[\widetilde{\gamma}_1\cdot...\cdot \widetilde{\gamma}_r]$
and $[\widetilde{\gamma}'_1\cdot ...\cdot \widetilde{\gamma}'_{r'}]$ to which $\widetilde{\eta}$ resticts to $C(I\times\{0\})$ and $C(I\times\{1\})$. The restriction of $\xi$ to $C(I\times\{0\})$ is the concatenation $[\widetilde{\gamma}_1\cdot...\cdot \widetilde{\gamma}_r]$ replaced by the same concatenation in which several constant $b$-loops (corresponding to the $G_v$ regions meeting $C(I\times\{0\})$) are inserted. Similarly for $[\widetilde{\gamma}'_1\cdot ...\cdot \widetilde{\gamma}'_{r'}]$. Since the value of $\kappa$ is unchanged under this operation we can safely assume that the homotopy connecting the concatenations is $\xi$, which satisfies (i) and (ii).

To finish the proof we need to check the equality 
\begin{equation}\label{eq:xi}\kappa[\widetilde{\gamma}_1\cdot...\cdot \widetilde{\gamma}_r]=\kappa[\widetilde{\gamma}'_1\cdot ...\cdot \widetilde{\gamma}'_{r'}].
\end{equation}

Now, we modify $\xi$,  analogously to the classical proof \cite{BR}, so that it sends all the meeting points of 3 cells in the decomposition $C(\calP)$ to points in $\fP$. Note that in the case of points over $C(I\times\{0\})$ or $C(I\times\{1\})$ they are already points in $\fP$, so we do not need any modification there. Let $\q$ be the point in $C(I^2)$ whose image is the intersection of $3$ cells $D_{u}, D_{v}, D_{w}$ of the decomposition $C(\calP)$. Recall that the points $g_{u}\comp \q$, $g_{v}\comp \q$ and $g_{w}\comp\q$ are pairwise $b$-equivalent. By condition (*) there exist 
\begin{itemize}
 \item a $b$-point $\p\in \fP$,
 \item $b$-paths $\delta_u:C(I)\to U_{j(u)}$  joining $g_{u}\comp \q$ with $\p$, $\delta_v:C(I)\to U_{j(v)}$  joining $g_{v}\comp \q$ with $\p$ and $\delta_w:C(I)\to U_{j(w)}$  joining $g_{w}\comp \q$ with $\p$,
 \item weak $b$-homotopies $\mu_{vw}:C(I^2)\to U_{j(v)}$ for every pair $v,w\in\{u,v,w\}^2$ of different indexes with $\mu_{vw}|_{C(I\times \{0\})}\sim_b \delta_v$ and such that the condition in (*) holds.
\end{itemize}
We modify the $b$-homotopy $\xi$ in a neighbourhood of every intersection of every (two or) three cells $D_k$ according to Figure \ref{fig:verticesinP}, gluing the $b$-homotopies $\mu_{ij}$ and some composition of $\xi$ with some collapsing mapping similar to the $\tau_2$ used above. This is just as the standard procedure in classical topology, plus the observation that condition (*) is what we need so that we actually obtain a weak $b$-map. We leave the details to the reader. We call $\xi'$ the resulting $b$-homotopy. 

\begin{figure}
\includegraphics[scale=0.2]{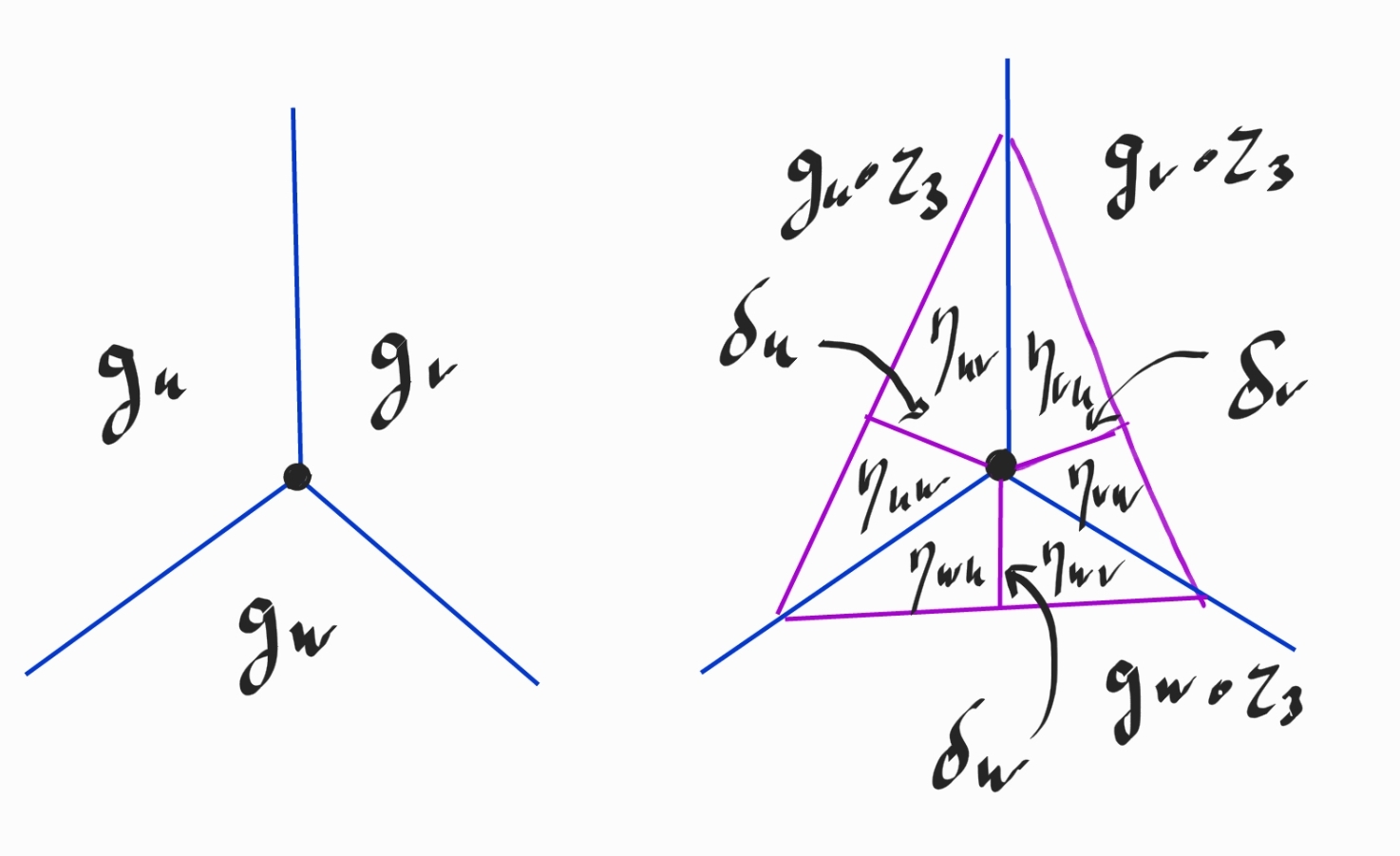} \caption{Modification  around the vertices of the decomposition $C(\calP)$, using hypothesis (*), to obtain the weak $b$-homotopy $\xi'$.}\label{fig:verticesinP}

\end{figure}

\begin{figure}
\includegraphics[scale=0.13]{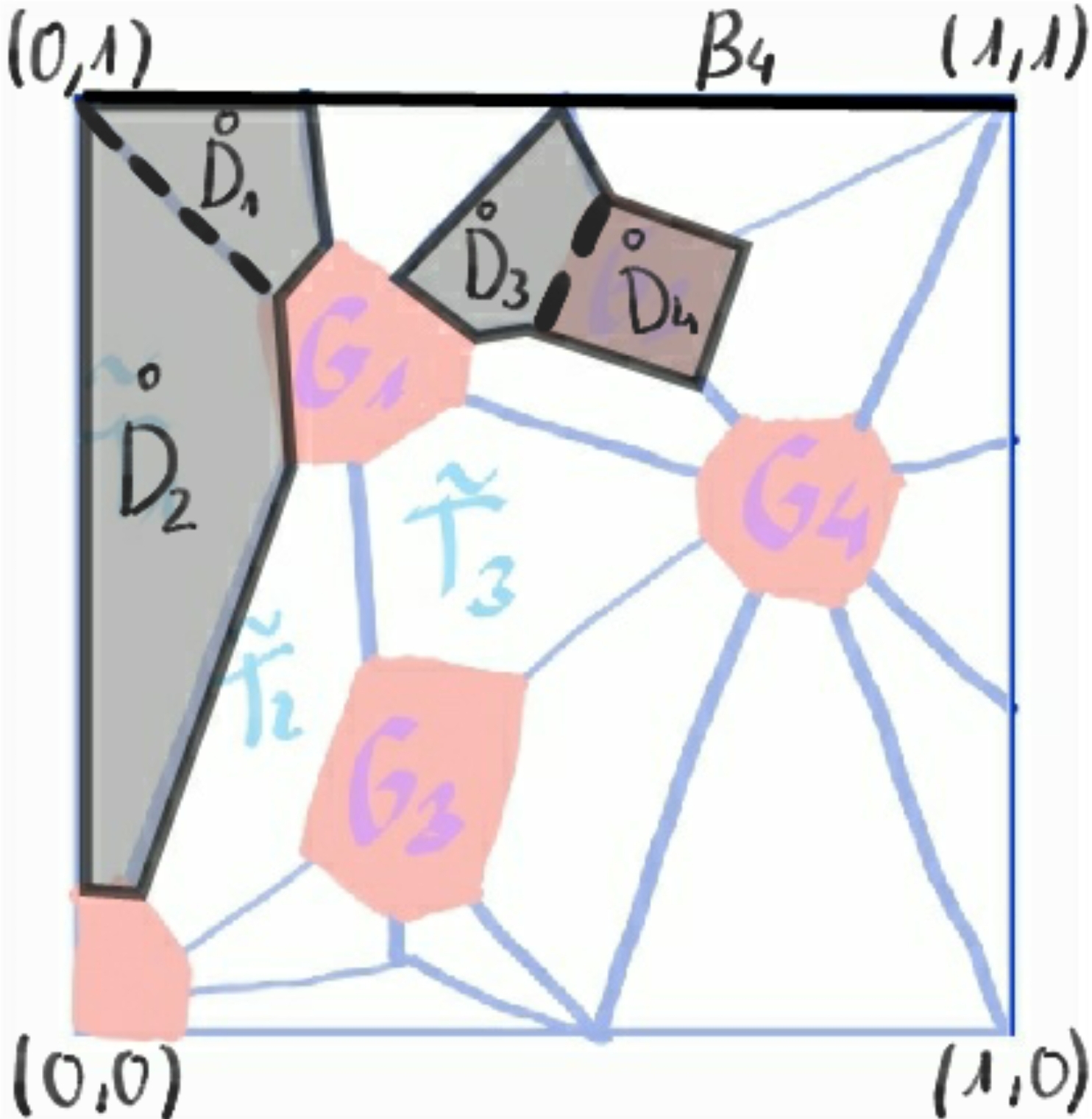}
\caption{Possible choice for the polygonal line $B_4$ for the example in the previous figures.}\label{fig:Bn}
\end{figure}

Now we adapt the procedure of the classical proof (see \cite{Hatcher}). 
 We choose a level $t=t_0$ in $C(I^2)$ and denote by $D_j^0$ the intersection of $D_j$ with that level. Considering a numbering $D^0_1,....,D^0_m$ of the polygons which cover $I^2$ such that for any $n\leq m$ the union $Q_n:=(I\times \{1\})\cup \cup_{i=1}^nD^0_n$ is contractible.
For any $n$ we define $B_n$ the {\em lower boundary} of $Q_n$, that is the connected  polygonal line containing the points $(0,1)$ and $(1,1)$ defined as the closure of the boundary of $[0,1]\times\RR\setminus (B_n\cup [0,1]\times (1,+\infty))$: 
see Figure \ref{fig:Bn}. 

Then we have concatenation expressions 
$$\xi'|_{B_n}=\alpha_1\cdot ...\alpha_l\cdot ...\cdot\alpha_r,$$
$$\xi'|_{B_{n+1}}=\alpha_1\cdot...\cdot\alpha'_l\cdot ...\cdot\alpha_r,$$
where each $\alpha_i$ is a b-path that connects two $b$-points of $\fP$, bounds partially some $D_k$, has image in a subset $U_i$ and $\alpha_l$ and $\alpha'_l$ are the $b$-paths whose union is the boundary of the region $D_{n+1}$ (that is $C(D_{n+1}^0$).  Then $g_i(D_{n+1})$ is contained in a subset $U_j$. Hence $[\alpha_l]$ and $[\alpha'_l]$ are equal in $MD\pi_1^b(U_j,\fP)$. This shows that for every $n$
\begin{equation}\label{eq:xi'}\kappa([\xi'|_{B_n}])=\kappa([\xi'|_{B_{n+1}}]).\end{equation}
Then, we conclude (\ref{eq:xi}) since it is clear that $\kappa([\xi'|_{B_0}])=\kappa([\xi|_{I\times\{1\}}])$ and $\kappa([\xi'|_{B_{m}}])=\kappa([\xi|_{I\times\{0\}}])$ and they are equal by (\ref{eq:xi'}).

\end{proof}

Let $(X,0,d)$ be a metric subanalytic germ and $(\calU,\fP)$ be a finite cover and a set of $b$-points in $X$ satisfying condition $(*)$. We wish to compare $MD\pi_1^b(X,\fP)$ with the colimit $\mathrm{colim} \{MD\pi_1^b(U_{i_1,...,i_r},\fP)\}_{i_1....i_r}$. 
By the universal property that we have just proved for $MD\pi_1^b(X,\fP)$ and the one defining the colimit we obtain a natural morphism of groupoids
\begin{equation}
\label{eq:comparisoncolim}
\sigma:\mathrm{colim} \{MD\pi_1^b(U_{i_1,...,i_r},\fP)\}_{i_1....i_r}\to MD\pi_1^b(X,\fP).
\end{equation}
We wish to find conditions ensuring that $\sigma$ is an isomorphism. 



\begin{theo}
\label{theo:SvKGroupoids}
Let $(X,0,d)$ be a metric subanalytic germ and $(\calU,\fP)$ be a finite cover and a set of $b$-points in $X$ satisfying condition $(*)_b$. Then $\sigma$ is surjective at the level of objects and morphisms. 

Assume  $(\calU,\fP)$ satisfies condition $(*)_b$ and  the following condition  

\begin{itemize}
\item [$(**)_b$] if $\gamma$ is a $b$-path in $(U_i,d_i)$ and  $\gamma'$ is a $b$-path in $(U_j,d_j)$ such that $\gamma\sim_b\gamma'$ in $(X,d)$ then there exists a $b$-path $\delta$ in $U_{ij}$ which is the same $b$-map as $\gamma$ in $U_i$ with the metric $d_i$ and as $\gamma'$ in $U_j$ with the metric $d_j$.
\end{itemize}
Then the morphism $(\ref{eq:comparisoncolim})$ is an isomorphism of groupoids. In other words, if $(\calU,\fP)$ satisfies conditions $(*)_b$ and $(**)_b$ then $MD\pi_1^b(X,\fP)$ is the colimit of the system of groupoids 
$\{MD\pi_1^b(U_{i_1,...,i_r},\fP)\}_{i_1....i_r}$ (this is the usual formulation of Seifert-van Kampen Theorem for groupoids).
\end{theo}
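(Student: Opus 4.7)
The plan is to use Theorem~\ref{theo:1stuniversal} as the main tool in both directions: first to exploit the universal property in order to produce morphisms out of $MD\pi_1^b(X,\fP)$, and second to recognise the colimit as itself satisfying the same universal property once we verify condition $(\dagger)_b$ for it.

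For the first assertion, surjectivity on objects reduces to the fact that every $\p\in\fP$, being an l.v.a.\ subanalytic map germ into $X=\bigcup_i U_i$, has a representative whose image is, sufficiently close to the origin, contained in a single $U_i$; this follows from basic subanalytic geometry (curve selection on the partition of the image of $\p$ by the $U_i$). For surjectivity on morphisms I would rerun the concatenation argument already carried out inside the proof of Theorem~\ref{theo:1stuniversal}: triangulate a representative $b$-path of a given morphism so that each piece lies in a single $U_{i_l}$, use condition $(*)_b$ to modify the intermediate endpoints so that they fall in $\fP$, and thereby write the morphism as $\iota_{U_{i_1},X}([\gamma_1])\comp\cdots\comp\iota_{U_{i_r},X}([\gamma_r])$, which is by construction the image under $\sigma$ of the composition of the $[\gamma_l]\in MD\pi_1^b(U_{i_l},\fP)$ taken in the colimit.

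For the second assertion the key claim is that the colimit $\mathcal{C}:=\mathrm{colim}\{MD\pi_1^b(U_{i_1\ldots i_r},\fP)\}$, equipped with the natural morphisms $\rho_i:MD\pi_1^b(U_i,\fP)\to\mathcal{C}$, satisfies property $(\dagger)_b$. Indeed, given $[\gamma_i]$ and $[\gamma_j]$ with representatives that are $b$-equivalent as weak $b$-maps in $(X,d)$, apply $(**)_b$ to obtain a $b$-path $\delta$ in $U_{ij}$ which agrees, as a weak $b$-map, with $\gamma_i$ in $(U_i,d_i)$ and with $\gamma_j$ in $(U_j,d_j)$. Since $b$-equivalent weak $b$-maps represent the same weak $b$-homotopy class (the constant homotopy witnesses this), $\iota_{U_{ij},U_i}([\delta])=[\gamma_i]$ in $MD\pi_1^b(U_i,\fP)$ and similarly for $j$, so functoriality of the colimit cocone gives $\rho_i([\gamma_i])=\rho_{ij}([\delta])=\rho_j([\gamma_j])$, establishing $(\dagger)_b$. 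Applying Theorem~\ref{theo:1stuniversal} to $(\mathcal{C},\{\rho_i\})$ I then obtain a unique groupoid morphism $\tau:MD\pi_1^b(X,\fP)\to\mathcal{C}$ with $\tau\comp\iota_{U_i,X}=\rho_i$; the uniqueness clauses on both sides (Theorem~\ref{theo:1stuniversal} for $\sigma\comp\tau=\mathrm{Id}$, the universal property of the colimit for $\tau\comp\sigma=\mathrm{Id}$) then force $\tau$ and $\sigma$ to be mutually inverse.

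The main obstacle is exactly the verification of $(\dagger)_b$ for the colimit, which is what forces the introduction of condition $(**)_b$. The subtlety is that $b$-equivalence of weak $b$-maps is a metric-dependent notion: a priori the fact that $\gamma_i$ and $\gamma_j$ are $b$-equivalent with respect to the global metric $d$ on $X$ does not supply a single weak $b$-map inside $U_i\cap U_j$ witnessing this equivalence with respect to the potentially smaller metrics $d_i$, $d_j$, $d_{ij}$. Condition $(**)_b$ is precisely the hypothesis that bridges this gap by realising the global $b$-equivalence through a common representative inside the intersection; once this step is granted, the rest of the argument is a formal colimit manipulation.
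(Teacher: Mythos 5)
Your proposal is correct and takes essentially the same route as the paper: both reduce the second assertion to checking, via Theorem~\ref{theo:1stuniversal}, that the colimit together with its cocone maps $b_i$ satisfies property $(\dagger)_b$ and the relevant universal property. You make explicit the point that the paper leaves terse, namely that $(\dagger)_b$ for the colimit is not a purely formal consequence of the colimit construction but genuinely uses condition $(**)_b$ to produce a common lift $\delta$ in $U_{ij}$ so that $b_i([\gamma_i])=b_{ij}([\delta])=b_j([\gamma_j])$; this is a useful clarification of the published argument.
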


\proof[Proof of Theorem~\ref{theo:SvKGroupoids}]
Since $\calU$ is a subanalytic cover the mapping $\sigma$ is always surjective at the level of objects. 

Let $\gamma$ be a $b$-path in $MD\pi_1^b(X,\fP)$. As in the proof of Theorem~\ref{theo:1stuniversal} we write $[\gamma]$ as 
$$[\gamma]=[\gamma_0\cdot\delta_1^0]\cdot [\overleftarrow{\delta_1^1}\cdot\gamma_1\cdot\delta_2^0]\cdot...\cdot[\overleftarrow{\delta_r^1}\cdot\alpha_r],$$
where each factor is a $b$-path in $MD\pi_1^b(U_i,\fP)$ for a certain $i$. This shows surjectivity at the level of morphisms.

%
%

By theorem~\ref{theo:1stuniversal}, in order to prove the isomorphism of groupoids under condition $(**)_b$ it is enough to show that the groupoid $L:=\mathrm{colim}\{MD\pi_1^b(U_{i_1,...,i_r},\fP)\}_{i_1....i_r}$ and the natural morphisms $b_i:MD\pi_1^b(U_{i},\fP)\to L$ satisfy:
\begin{enumerate}[(i)]
 \item Property $(\dagger)_b$, and 
 \item for any other groupoid $K$ and groupoid morphisms $\kappa_i:MD\pi_1^b(U_i,\fP)\to K$ with the property $(\dagger)_b$ there exists a unique groupoid morphism $\kappa:L\to K$ such that $\kappa\comp b_i=\kappa_i$ for any $i\in I$. 
\end{enumerate}
Both assertions follow from the definition of colimit.

\endproof
Now,we particularize for the  MD fundamental group, similarly to the classical Seifert-van Kampen Theorem:

\begin{cor}\label{cor:svk2abiertos}
Let $\calU=\{U_1,U_2\}_{i\in I}$ be a finite cover of a metric subanalytic germ $(X,0,d)$ by subanalytic open subsets $U_i$ with metrics $d_i$ and $d_j$, both of type (a) or (b). Assume that $U_1$, $U_2$ and $U_1\cap U_2$ are $b$-connected. Choose a $b$-point $\p$ in $U_1\cap U_2$. If $\calU$ satisfies property $(**)_b$ then $MD\pi_1^b(X,\p)$ is isomorphic to the amalgamated product  
$$MD\pi_1^b(U_1,\p) *_{MD\pi_1^b(U_1\cap U_2,\p)} MD\pi_1^b(U_2,\p).$$
\end{cor}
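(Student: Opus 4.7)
The plan is to apply Theorem~\ref{theo:SvKGroupoids} to the covering $\calU=\{U_1,U_2\}$ with the singleton set of base points $\fP=\{\p\}$, and then specialize the resulting isomorphism of groupoids to an isomorphism of groups.

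First I would check condition $(*)_b$. Any three $b$-equivalent points $\q_i,\q_j,\q_k$ living in $U_i,U_j,U_k$ (with $i,j,k\in\{1,2\}$) must be connected to a point of $\fP$, which here is forced to be $\p$. By the $b$-connectedness of each $U_v$ (together with the fact that $\p\in U_1\cap U_2$ and so $\p$ represents a $b$-point of both $U_1$ and $U_2$) and Lemma~\ref{lema:connect two points}, there is a $b$-path $\delta_v$ from $\q_v$ to $\p$ inside $(U_v,d_v)$. For the required homotopies I would take the trivial ones $\mu_{vw}(y_1,y_2,t):=\delta_v(y_1,t)$; each restriction $\mu_{vw}|_{C(I\times\{0\})}$ equals $\delta_v$, and each restriction $\mu_{vw}|_{C(I\times\{1\})}$ is the constant loop at $\p$, so all the compatibility conditions in $(X,d)$ hold tautologically. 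Condition $(**)_b$ is an explicit hypothesis.

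Theorem~\ref{theo:SvKGroupoids} then yields a groupoid isomorphism
\[
MD\pi_1^b(X,\fP)\;\cong\;\mathrm{colim}\{MD\pi_1^b(U_{i_1,\dots,i_r},\fP)\}_{i_1,\dots,i_r}.
\]
The $b$-connectedness assumptions guarantee that $\fP\cap U_1$, $\fP\cap U_2$ and $\fP\cap(U_1\cap U_2)$ all consist of the single $b$-point $\p$, so every groupoid in the diagram has a single object and is canonically identified with its vertex group, namely $MD\pi_1^b(U_i,\p)$, respectively $MD\pi_1^b(U_1\cap U_2,\p)$.

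Finally I would invoke the standard fact that the colimit in the category of groupoids of a span
\[
MD\pi_1^b(U_1,\p)\;\longleftarrow\;MD\pi_1^b(U_1\cap U_2,\p)\;\longrightarrow\;MD\pi_1^b(U_2,\p)
\]
of one-object groupoids is again a one-object groupoid whose vertex group is the pushout computed in the category of groups, that is, the amalgamated product $MD\pi_1^b(U_1,\p)*_{MD\pi_1^b(U_1\cap U_2,\p)}MD\pi_1^b(U_2,\p)$. The only delicate step I anticipate is the verification of $(*)_b$ with a single base point; once that is in place, the rest is a formal specialization of Theorem~\ref{theo:SvKGroupoids} and a routine comparison between pushouts of one-object groupoids and of groups.
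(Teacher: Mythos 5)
The high-level structure of your argument is right, and the final step (passing from a colimit of one-object groupoids to the amalgamated product of groups) is the intended one. But your verification of $(*)_b$ contains a computational error that hides the real difficulty, and because of it you miss the place where the hypothesis $(**)_b$ actually enters.

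You set $\mu_{vw}(y_1,y_2,t):=\delta_v(y_1,t)$ and claim that $\mu_{vw}|_{C(I\times\{1\})}$ is the constant loop at $\p$. It is not: setting $y_2=1$ just gives back $\delta_v(y_1,t)$, so $\mu_{vw}|_{C(I\times\{1\})}=\delta_v$. (The restrictions that \emph{are} constant for your $\mu_{vw}$ are the ones over $C(\{0\}\times I)$ and $C(\{1\}\times I)$, which is exactly what ``relative to the extremes'' demands.) Consequently, the last clause of $(*)_b$ requires $\delta_v\sim_b\delta_w$ as $b$-maps in $(X,d)$ for all $v,w\in\{i,j,k\}$, and this is far from tautological: the paths $\delta_v$ you obtain from $b$-connectedness of the individual $U_v$ are chosen independently in each piece, and there is no reason they should coincide in $(X,d)$ — they could, for example, wind around different loops of $U_v$ and $U_w$.

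The paper resolves this precisely by invoking $(**)_b$ when checking $(*)_b$, rather than only when invoking Theorem~\ref{theo:SvKGroupoids}. Since the $\q_v$ are $b$-equivalent, one views them as constant $b$-paths and applies $(**)_b$ to produce a single point $\q$ in $U_1\cap U_2$ which is $b$-equivalent to all of them; $b$-connectedness of $U_1\cap U_2$ then produces a single $b$-path $\gamma$ from $\q$ to $\p$ inside the intersection, and this same $\gamma$ serves as all the $\delta_v$ simultaneously, with constant homotopies $\mu_{vw}$. Compatibility then \emph{does} hold tautologically, because the $\mu_{vw}|_{C(I\times\{1\})}$ are literally the same $b$-map. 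So the gap in your proposal is: replace the independent $\delta_v$ constructed via Lemma~\ref{lema:connect two points} in each $U_v$ by a common $\delta$ living in $U_1\cap U_2$, and note that this common choice exists only because of $(**)_b$ plus $b$-connectedness of the intersection. Once this is fixed, the rest of your argument goes through as written.
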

\proof
We have to check that Property $(*)_b$ holds. Let $\p_1$, $\p_2$ be $b$-points in $U_1$ and $U_2$ respectively. If $\p_1\sim_b \p_2$ then by property $(**)_b$ there is a point $\q$ in $U_1\cap U_2$ which is $b$-equivalent to both. Choose a path $\gamma$ connecting $\q$ with $\p$ (use that $U_1\cap U_2$ is $b$-path connected). We can choose $\gamma$ for both required paths in condition $(*)_b$ and choose the homotopies to be constant and equal to $\gamma$. 
\endproof

\begin{cor}
\label{cor:svknerve}
Let $(X,0,d)$ be a metric subanalytic germ. Fix $b$ and let $\calU$ be a cover satisfying Condition $(**)_b$. Assume that each intersection of up to $3$ open subsets of the cover is $b$-connected and has trivial MD $b$-fundamental group. Then the MD $b$-fundamental group of $X$ is isomorphic to the topological fundamental group of the nerve of the cover.
\end{cor}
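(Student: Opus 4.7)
The plan is to deduce the result from the groupoid Seifert--van Kampen Theorem~\ref{theo:SvKGroupoids}, and then to identify the colimit of groupoids with the fundamental groupoid of the 2-skeleton of the nerve, using that all groupoids in the colimit diagram are ``contractible''.

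First I would verify that hypothesis $(*)_b$ of Theorem~\ref{theo:SvKGroupoids} is satisfied. Let $\fP$ be any set of $b$-points containing at least one representative in each non-empty triple intersection $U_i\cap U_j\cap U_k$ (this is possible by the $b$-connectedness of each such intersection, which in particular forces them to be non-empty whenever they contain $b$-equivalent points in the respective $U_i$'s, something that follows from condition $(**)_b$ applied to constant $b$-paths). Given $b$-equivalent points $\q_i,\q_j,\q_k$ in $U_i,U_j,U_k$ respectively, condition $(**)_b$ (applied to the constant $b$-paths) yields a $b$-point $\widetilde{\q}$ in $U_i\cap U_j\cap U_k$ that is $b$-equivalent to all three. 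Now choose $\p\in \fP\cap (U_i\cap U_j\cap U_k)$; $b$-connectedness of this triple intersection produces a $b$-path $\delta$ from $\widetilde{\q}$ to $\p$ entirely inside it, and we let $\delta_v$ be the $b$-path $\delta$ regarded as living in $(U_v,d_v)$ for $v\in\{i,j,k\}$. The weak $b$-homotopies $\mu_{vw}$ in $(*)_b$ are then produced in $(U_v,d_v)$ using the triviality of $MD\pi_1^b(U_v,\fP)$: any two $b$-paths with the same endpoints are $b$-homotopic relative to them, so one can take $\mu_{vw}$ to be a $b$-homotopy relative to extremes from $\delta_v$ to itself, obtaining the matching condition $\mu_{vw}|_{1}\sim_b\mu_{wv}|_{1}$ trivially (both being equal to the constant path at $\p$).

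With $(*)_b$ and $(**)_b$ in hand, Theorem~\ref{theo:SvKGroupoids} gives
\[
MD\pi_1^b(X,\fP)\;\cong\;\mathrm{colim}\,\{MD\pi_1^b(U_{i_1\cdots i_r},\fP)\}_{i_1,\dots,i_r}.
\]
Next I would compute this colimit. The hypothesis that each $U_i$, $U_i\cap U_j$ and $U_i\cap U_j\cap U_k$ is $b$-connected with trivial $b$-MD fundamental group means that each groupoid $MD\pi_1^b(U_{i_1\cdots i_r},\fP)$ (for $r\leq 3$) is a \emph{codiscrete} (i.e.\ simply connected) groupoid on the set $\fP\cap U_{i_1\cdots i_r}$: between any two objects there is exactly one morphism. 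Restricting the colimit diagram to $r\leq 2$ already determines it, since the $r=3$ groupoids add no further morphisms but do impose relations. Concretely, selecting one base $b$-point $\p_i$ in each $U_i$ and a unique morphism $\gamma_{ij}:\p_i\to \p_j$ coming from $U_i\cap U_j$ for each edge $\{i,j\}$ of the nerve, the colimit groupoid is generated by the $\gamma_{ij}$'s subject to the relation $\gamma_{ik}=\gamma_{jk}\circ\gamma_{ij}$ for every $2$-simplex $\{i,j,k\}$ of the nerve (this relation is forced by uniqueness of the morphism $\p_i\to\p_k$ inside the triple intersection). This is exactly the standard presentation of the fundamental groupoid of the $2$-skeleton of the nerve with vertex set $\{\p_i\}$; restricting to the isotropy at one base point gives $\pi_1(\calN(\calU))$, and since adding higher-dimensional cells of the nerve does not affect $\pi_1$, this yields $\pi_1$ of the full nerve.

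The main obstacle I anticipate is the verification of condition $(*)_b$: even though $(**)_b$ provides the required ``refining'' of $b$-equivalences to points in pairwise intersections, extracting a \emph{simultaneous} refinement to a point in the triple intersection, together with the $b$-paths $\delta_v$ and the homotopies $\mu_{vw}$ lying inside the correct pieces, is the delicate step. The subsequent identification of the colimit with $\pi_1$ of the nerve is essentially combinatorial, but requires being careful to keep track of which morphisms come from which piece of the cover when passing between the $r=2$ generators and the $r=3$ relations.
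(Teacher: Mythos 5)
Your proposal follows the same broad strategy as the paper: choose base points meeting all intersections up to triple, verify $(*)_b$, apply the groupoid Seifert--van Kampen Theorem~\ref{theo:SvKGroupoids}, and then identify the colimit of codiscrete (simply connected) groupoids with the fundamental groupoid of the $2$-skeleton of the nerve. The paper's proof is three sentences long and delegates this last identification to Hatcher Prop.~4.G.2, essentially the same combinatorial step you carry out.

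The genuine gap is exactly the one you flag in your final paragraph, and it is more serious than a ``delicate step.'' You invoke ``$(**)_b$ applied to constant $b$-paths'' to produce a $b$-point $\widetilde{\q}$ inside the \emph{triple} intersection $U_i\cap U_j\cap U_k$ that is $b$-equivalent to all three $\q_v$'s. But $(**)_b$ as stated in Theorem~\ref{theo:SvKGroupoids} applies only to \emph{two} elements of the cover: from $b$-equivalent $b$-paths in $(U_i,d_i)$ and $(U_j,d_j)$ it manufactures a point in the pairwise intersection $U_{ij}$. Iterating this --- e.g.\ first to $(\q_i,\q_j)$ to get $\q_{ij}\in U_i\cap U_j$, then to $(\q_{ij},\q_k)$ viewed in $(U_i,d_i)$ and $(U_k,d_k)$ --- only ever lands you in a (different) pairwise intersection, never in $U_i\cap U_j\cap U_k$. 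Moreover it is genuinely possible to have pairwise $b$-equivalent $\q_i,\q_j,\q_k$ with $U_i\cap U_j\cap U_k=\emptyset$ (for instance if the entire link of $X$ collapses at a rate faster than $b$, all points are $b$-equivalent while the cover can fail to have triple overlaps). In that case there is no candidate $\p$: the paths $\delta_v$ must end at $\p$ inside $(U_v,d_v)$ for each $v\in\{i,j,k\}$, and when $d_v$ is of type (b) (intrinsic inner metric of $U_v$) this forces $\p\in U_i\cap U_j\cap U_k$. So $(*)_b$ simply fails for that triple. Filling the gap requires either reading the hypothesis as ``\emph{all} formal triple intersections are $b$-connected'' (which forces them non-empty and makes the nerve a full $2$-skeleton with trivial $\pi_1$, trivializing the corollary), or supplying an extra hypothesis/argument tailored to the cover, as the paper in fact does later for the surface case (Proposition~\ref{prop:check***}), where triple intersections are empty and $(*)_b$ is checked by hand. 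The paper's own phrase ``condition $(*)_b$ is trivially satisfied'' glosses over exactly the same point, so you are not behind the paper here; but the missing step is real and needs to be addressed, not merely noted.

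Two smaller remarks. First, in your construction of $\mu_{vw}$ you take the constant homotopy at $\delta_v$, so that $\mu_{vw}|_{C(I\times\{1\})}$ equals $\delta$ (not ``the constant path at $\p$'' as you parenthetically write); since $\delta_v=\delta_w=\delta$ as $b$-maps in $(X,d)$, the matching condition is satisfied anyway, so this slip is harmless. Second, the colimit computation in the second half --- that each $MD\pi_1^b(U_{i_1\cdots i_r},\fP)$ for $r\le 3$ is codiscrete and that the colimit is generated by edges of the nerve with relations from $2$-simplices --- is correct and is what the paper's reference to Hatcher~4.G.2 accomplishes.
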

\proof
Choose a set $\fP$ of base points having one point at each intersection of $1$, $2$ or $3$ open subsets of the cover. By the $b$-connectivity assumptions condition $(*)_b$ is trivially satisfied. Then by Theorem~\ref{theo:SvKGroupoids} $MD\pi_1^b(X,0,d,\fP)$ is the coproduct of the system of fundamental groupoids associated to the cover. From this point the proof is reduced to the same statement in the topological category (see \cite{Hatcher} Proposition 4.G2).

\section{Comparison theorems}\label{sec:Compar}

\subsection{The $\infty$-MD homotopy sets and the homotopy of the link}

\begin{prop}\label{prop:MD homotopy at infinity}
Let $(X, Y, x_0, d, \p)$ be a pointed pair of metric subanalytic germs. Fix $\epsilon > 0$ small enough so that $(L_{X, \epsilon},L_{Y,\epsilon},\p_\epsilon):=(X,Y,\mathrm{Im}(\p)\cap \SSS_\epsilon)$ is the link of the pair. There is a bijection
$$
\zeta : \pi_n(L_{X, \epsilon}, L_{Y,\epsilon},\p_\epsilon)\xrightarrow{\sim} MD\pi_n^\infty(X,Y, \p)
$$
where $\pi_n(L_{X, \epsilon}, L_{Y,\epsilon},\p_\epsilon)$ denotes the standard $n$-th homotopy set. The bijection is a group isomorphism whenever a group structure is defined.
\end{prop}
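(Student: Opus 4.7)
At $b=\infty$ a weak $b$-map is simply a continuous l.v.a.\ subanalytic map germ, and a weak $\infty$-homotopy is a continuous l.v.a.\ subanalytic homotopy, so the proposition is essentially an assertion that the germ and its link carry the same homotopical information provided we respect the l.v.a.\ normalization. The plan is to fix a subanalytic conical structure on $(X,Y,\mathrm{Im}(\p))$, cone a representative of a class in $\pi_n(L_{X,\epsilon},L_{Y,\epsilon},\p_\epsilon)$ to produce an $(n,\infty)$-loop, and conversely radially normalize an $(n,\infty)$-loop to recover a map into the link.

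\textbf{Definition of $\zeta$.} Using Definition \ref{def:con_struc} choose a subanalytic conical structure $h:C(L_{X,\epsilon})\to X$ compatible with $Y$ and with the radial line of $\p$; this exists by the cited theorem of Coste. Given $g:(I^n,\partial I^n, J^{n-1})\to (L_{X,\epsilon},L_{Y,\epsilon},\p_\epsilon)$, where $J^{n-1}=\partial I^n\setminus(I^{n-1}\times\{1\})$, define
$$\widetilde g:C(I^n)\to X,\qquad \widetilde g(y,t):=h(g(y),t/\epsilon\cdot\epsilon)=h(g(y),t).$$
Then $\widetilde g$ is continuous, subanalytic and l.v.a.\ (with constant $1$ by construction of $h$), and the compatibility of $h$ with the subsets translates the boundary conditions of $g$ into the boundary conditions of Definition \ref{def:bn-loop}. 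A topological homotopy of pairs between $g_0$ and $g_1$ may be replaced by a subanalytic one (using that $L_{X,\epsilon}$ is a compact subanalytic set and admits subanalytic triangulations, so the subanalytic category computes the usual $\pi_n$); coning such a homotopy via $h$ provides a weak $\infty$-homotopy between $\widetilde g_0$ and $\widetilde g_1$ respecting all the required subgerms. Hence $\zeta([g]):=[\widetilde g]$ is well defined and a homomorphism when the product exists (concatenation commutes with the cone construction).

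\textbf{Inverse map.} Given an $(n,\infty)$-loop $\varphi:C(I^n)\to X$, apply Remark \ref{rem:lvareduction} to $h^{-1}\comp\varphi$ to obtain a subanalytic homeomorphism $\phi:(C(I^n),\underline 0)\to(C(I^n),\underline 0)$ that preserves the stratification by $C(\partial I^n)$ and $C(J^{n-1})$ and satisfies $\|h^{-1}\comp\varphi\comp\phi(y,t)\|=t$, i.e.\ $\varphi\comp\phi$ respects the radial coordinate. Writing $h^{-1}\comp\varphi\comp\phi(y,t)=(g_\varphi(y,t),t)$ in the coordinates of $C(L_{X,\epsilon})$, the subanalytic homotopy $s\mapsto g_\varphi(y,(1-s)t+s\epsilon)$ shows that $g_\varphi(\,\cdot\,,t)$ is independent, up to topological homotopy of pairs, of $t\in(0,\epsilon]$; restricting at $t=\epsilon$ gives a map $g:I^n\to L_{X,\epsilon}$ meeting the required boundary data. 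Define $\sigma([\varphi]):=[g]$. The subanalytic isotopy from $\phi$ to the identity through l.v.a.\ homeomorphisms (parametrizing linearly) produces, through Proposition \ref{prop:homotopyinvariance}(1), a weak $\infty$-homotopy between $\varphi$ and $\widetilde g$, showing $\zeta\comp\sigma=\mathrm{id}$; the reverse composition is immediate.

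\textbf{Main obstacle.} The delicate point is not the geometry of the conical structure but the compatibility of the two homotopy categories at play: one must know that any continuous map of pairs out of $I^n$ into the compact subanalytic link admits a subanalytic representative in each homotopy class, and that subanalytic homotopies of pairs coincide with continuous ones. This is handled by simplicial approximation with respect to a subanalytic triangulation of $(L_{X,\epsilon},L_{Y,\epsilon})$, combined with Remark \ref{rem:lvareduction} to upgrade every continuous subanalytic representative to an l.v.a.\ one after coning. Once this technical dictionary is in place, injectivity of $\zeta$ is just the cone of the link homotopy, and bijectivity follows from $\sigma$ being a two-sided inverse.
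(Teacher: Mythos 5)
Your proof follows essentially the same strategy as the paper's: fix a distance-preserving conical structure $h$, cone a link representative up to define $\zeta$, and for surjectivity radially normalize an $(n,\infty)$-loop and then collapse in the $t$-direction to the level $t=\epsilon$. The reduction to subanalytic $\pi_n$ of the link via subanalytic triangulation is also the paper's first step.

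The one genuine gap is in the construction of the inverse map, where you invoke Remark~\ref{rem:lvareduction} for the map $f=h^{-1}\comp\varphi$. That remark requires $f$ to be a homeomorphism onto its image, and an arbitrary $(n,\infty)$-loop $\varphi$ is typically far from injective (e.g.\ the constant loop $c_{\p,n}$ collapses all of $C(I^n)$ to $\mathrm{Im}(\p)$). So the cited remark does not apply, and the homeomorphism $\phi$ you need does not come for free. The paper achieves the same radial normalization by a direct construction that works for non-injective loops: writing $h^{-1}(x)=(\alpha(x),\tau(x))$, for each $y$ one inverts the l.v.a.\ reparametrization $t\mapsto\tau(\varphi(y,t))$ to obtain $r_y(t)$, and uses the explicit homotopy $H_1(y,s,t):=\varphi\bigl(y,(1-s)t+s\,r_y(t)\bigr)$ to carry $\varphi$ to $\varphi'(y,t):=\varphi(y,r_y(t))$ with $\tau(\varphi'(y,t))=t$. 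With that substitution (and keeping your collapse homotopy to $t=\epsilon$, which matches the paper's $\rho(t,s)$), the rest of your argument goes through, including the claim that $\zeta$ and $\sigma$ are two-sided inverses.
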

\begin{proof}
We can assume that $\p$ is a normal point by independence of the base point (see Proposition \ref{prop:indbasepoint}). Then $\p_\epsilon=\p(\epsilon)$.

It is well known that 
 $\pi_n(L_{X, \epsilon}, L_{Y,\epsilon},\p_\epsilon)$ is the quotient of the set of subanalytic $n$-loops in $(L_{X, \epsilon}, L_{Y,\epsilon},\p_\epsilon)$ by subanalytic homotopies (this can be done using a subanalytic triangulation and the simplicial approximation theorem). Now to define $\zeta$, we choose  $h: (C(L_{X, \epsilon}, L_{Y, \epsilon},\p_\epsilon),0)\xrightarrow{\sim} (X,Y, \text{Im}(\p), x_0) $ a subanalytic homeomorphism germ that gives conical structure (see Definition \ref{def:con_struc} and Remark~\ref{rem:lvareduction}) to $(X,0)$ compatible with the subgerm $Im(\p)$ and such that $||h(x,t)||=t$. 
Then, given a subanalytic $n$-loop $\psi:I^n\to (L_{X, \epsilon}, L_{Y,\epsilon},\p_\epsilon)$ we define $\zeta(\psi):C(I^n)\to X$ by the formula $\zeta(\psi)(x,t)=h(\psi(x),t)$. The assignment is obviously well defined. 

Let us show surjectivity. Let $\varphi : C(I^n) \to X$ be any $\infty$-MD $n$-loop in $MD\pi_n^\infty (X,Y,\p)$. Let us denote  $h^{-1}(x) = (\alpha(x), \tau(x))$, where $h$ is the homeomorphism we chose above and we are taking coordinates $(y,t)$ in $C(L_{X, \epsilon})$ as in Notation \ref{not:cone}.
We see first that $\varphi$ is $b$-homotopic to $ (\infty,b)$-loop $\varphi'$ satisfying $\tau'(\varphi'(y,t))=t$. Let $r_y(t)$ be the inverse map germ of the map germ $\tau(\varphi(y,\_))$ defined on $(0,\epsilon)$. Then it is clear that  $\tau(\varphi(y,r_y(t)))=t$. We can take $\varphi'(y,t):=\varphi(y,r_y(t))$ since it is clear that $H_1:C(I^{n+1})\to (X,Y)$ defined by $H_1(y,s,t):=\varphi(y,(1-s)t+sr_y(t))$ defines a homotopy from $\varphi$ to $\varphi'$. 

Now we see that $\varphi'$ is  homotopic to $\varphi''(y,t):=h( \alpha\circ\varphi'(y,\epsilon),t)$ which is obviously in the image of $\zeta$. A homotopy from $\varphi''$ to $\varphi'$ is $H(y,s,t):=h(\alpha(\varphi'(y,\rho(t,s)),t)$ with $t\in (0,\epsilon)$, where $\rho(t,s):=\epsilon$ if $t\geq (1-s)\epsilon$ and $\rho(t,s):=t+(1-s)\epsilon$ if $t\leq (1-s)\epsilon$.

Injectivity is proven applying the same procedure to the homotopies.
\end{proof}

\subsection{Comparison of germs with the outer metric and their horn neighbourhoods}
\label{sec:hurew}

\begin{definition}
\label{def:bclosechains}
Let $d_e$ denote the euclidean metric in $\RR^n$. Let $(X,Y,d_e,O,\p)$ be a pointed pair metric of subanalytic germs embedded in $\RR^n$ with the outer metric $d_e$. For $b<\infty$ we denote by $\calN_b MD\Gamma^b_k(X,Y,\p)$ the set of weak $b$-maps $\varphi:C(I^k)\to (\RR^n,O)$ satisfying
\begin{enumerate}[(a)]
 \item $\lim_{t\to 0}\frac{max\{d_e(\varphi(ty,t),X):y\in I^k\}}{t^{b}}=0.$
 \item for any point $\q$ in $C(\partial I^k)$, the point $\varphi \comp \q$ is $b$-equivalent to a point $\q':(0,\epsilon)\to Y$.
 \item for any normal point $\q$ in $C(\partial I^k\setminus (I^{k-1}\times\{1\}))$, the point $\varphi \comp q$ is $b$-equivalent to $\p$.
\end{enumerate}
We denote by $\calN_b MD\pi^b_k(X,Y,\p)$ the quotient of $\calN MD\Gamma^b_k(X,Y,\p)$ by weak $b$-homotopies $H:C(I^{k+1})\to I$, relative to $C(\partial I^k\setminus (I^{k-1}\times\{1\}))$, preserving the inclusion of $\partial I^{n}$ into $Y$ and satisfying 
$$\lim_{t\to 0}\frac{max\{d_e(H(ty,t),X):y\in I^{k+1}\}}{t^b}=0.$$
\end{definition}

The sets $\calN_b MD\pi^b_k(X,Y,\p)$ have a group structure whose product is the concatenation of $b$-loops whenever this concatenation is possible. 

\begin{prop}
\label{prop:rhoiso}
With the notations of the previous definition there is a bijection (a  group isomorphism when the group structure is defined)
$$\rho:\calN_{b}MD\pi^b_k(X,Y,\p)\to MD\pi^{b}_k (X,Y,\p)$$
for any $k$ and for any $b$.
\end{prop}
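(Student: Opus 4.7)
The natural candidate for $\rho$ is the closest-point projection. Applying Lemma~\ref{lem:retracciondiscontinua} to a small compact subanalytic neighborhood $S$ of a representative of $X$ in $\RR^n$ produces a finite subanalytic partition $\{S_k\}$ of $S$ together with continuous subanalytic maps $\pi_k: S_k \to X$ satisfying $d_e(z,\pi_k(z)) = d_e(z,X)$. Given $\varphi \in \calN_b MD\Gamma^b_k(X,Y,\p)$ with representative $\{(C_j,f_j)\}_{j \in J}$, condition (a) of Definition~\ref{def:bclosechains} forces $f_j$ to take values in $S$ near the vertex, so one refines the cover to $\{C_j \cap f_j^{-1}(S_k)\}_{j,k}$ and sets $\rho(\varphi) := \{(C_j \cap f_j^{-1}(S_k),\, \pi_k \comp f_j)\}_{j,k}$. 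Condition (a) together with $d_e(z,\pi_k(z))=d_e(z,X)$ gives $d_e(\pi_k\comp f_j\comp \q,\, f_j\comp \q) = o(t^b)$ for every point $\q$, so the data is $b$-consistent at overlaps (hence a weak $b$-map into $X$ with the outer metric), the boundary and base-point conditions of Definition~\ref{def:bclosechains} transfer to those of Definition~\ref{def:bn-loop}, and the construction is independent up to $b$-equivalence of the choices of representative and partition. Applying the same projection procedure to a horn-neighborhood homotopy shows that $\rho$ descends to equivalence classes and preserves concatenation and the constant loop, giving a group homomorphism whenever the product is defined.

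For surjectivity, any $\psi : C(I^k) \to X$ representing a class in $MD\pi^b_k(X,Y,\p)$ already lies in $\calN_b MD\Gamma^b_k(X,Y,\p)$ trivially, and $\pi_k \comp \psi$ is at zero euclidean distance from $\psi$ at every point on which it is defined; hence $\pi_k \comp \psi$ is $b$-equivalent to $\psi$ as a weak $b$-map into $X$, and $\rho([\psi]) = [\psi]$.

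For injectivity, suppose $\rho([\varphi_0]) = \rho([\varphi_1])$ and let $H \in MD\Gamma^b_{k+1}(X,Y,\p)$ realise this equality inside $X$. The plan is to build $\widetilde H \in \calN_b MD\Gamma^b_{k+1}(X,Y,\p)$ between $\varphi_0$ and $\varphi_1$ by concatenating three pieces in the homotopy direction: the straight-line interpolation $(1-s)\varphi_0 + s(\pi\comp \varphi_0)$, then $H$ (suitably reparametrised), then the reverse interpolation from $\pi\comp \varphi_1$ back to $\varphi_1$. The interpolations remain in the horn region because at every intermediate $s$ and every point $\q$ the euclidean distance to $X$ is bounded by $(1-s)\cdot d_e(\varphi_i\comp \q,\, X) = o(t^b)$. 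At the two junctures the two sides agree up to a gap of $o(t^b)$ by the projection estimate, so Remark~\ref{rem:glueweak} permits gluing into a single weak $b$-map; the boundary conditions and the relativity on $C(\partial I^k\setminus (I^{k-1}\times\{1\}))$ are preserved because on that subgerm all three pieces are $b$-equivalent to $\p$.

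The main obstacle is the bookkeeping needed to verify that each construction above—the piecewise projection, the straight-line interpolations and the concatenations—both remains a weak $b$-map in the sense of Definition~\ref{def: weak b-maps} and continues to satisfy condition (a) of Definition~\ref{def:bclosechains}. In every instance the estimate reduces to combining the horn bound $d_e(\varphi(ty,t),X) = o(t^b)$ with the tautological identity $d_e(z,\pi_k(z)) = d_e(z,X)$, so no new geometric input beyond Lemma~\ref{lem:retracciondiscontinua} and the usual concatenation-of-$b$-maps arguments is required.
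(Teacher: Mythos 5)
Your construction of $\rho$ by closest-point projection is the same idea the paper uses, and Lemma~\ref{lem:retracciondiscontinua} is the right key lemma. There are, however, two technical gaps. First, Lemma~\ref{lem:retracciondiscontinua} requires the target $Q$ (here $X$) to be compact, so a preliminary reduction to the case of closed $X$ is needed; the paper does this at the outset by invoking Proposition~\ref{prop:smallqis} (and Corollary~\ref{cor:closure}). Second, and more seriously, the sets $\{S_k\}$ produced by Lemma~\ref{lem:retracciondiscontinua} form a \emph{disjoint} partition of $S$ and are not closed, so the refined cover $\{C_j \cap f_j^{-1}(S_k)\}_{j,k}$ you write down is not a finite \emph{closed} subanalytic cover, and the data you assign is therefore not a weak $b$-map in the sense of Definition~\ref{def: weak b-maps}. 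The paper sidesteps this by taking $U$ to be the union of the interiors of the $S_k$ (a dense subanalytic subset), passing first to loops small with respect to $U$ via an adaptation of Proposition~\ref{prop:smallqis}, and then choosing a representative $\{(C_j,g_j)\}$ whose pieces each already land in a single $S_{i(j)}$; one then post-composes with $f_{i(j)}$ on each piece without refining the closed cover at all. Finally, your injectivity argument via a three-piece interpolating homotopy is more work than is needed: condition (a) of Definition~\ref{def:bclosechains} already makes $\varphi$ and its projection \emph{$b$-equivalent} as weak $b$-maps into $\RR^n$ (not merely $b$-homotopic), so the inclusion $MD\pi^{b}_k(X,Y,\p)\hookrightarrow\calN_b MD\pi^b_k(X,Y,\p)$ is a two-sided inverse of $\rho$ on the nose, and no interpolation has to be built.
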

\proof
As a consequence of Proposition~\ref{prop:smallqis} we can replace $X$ by its closure without loosing generality. So we assume that $X$ is closed. 

Apply Lemma~\ref{lem:retracciondiscontinua} to the pair $B\supset X$. Let $\calS:=\{S_i\}_{i\in I}$ and $\{f_i\}_{i\in I}$ be the partition and the subanalytic maps predicted in the Lemma \ref{lem:retracciondiscontinua} . Then $\{\overline{S}_i\}_{i\in I}$ is a closed cover of $B$. Let $U$ be the union of the interiors of the sets $S_i$. Then $U$ is a dense subanalytic subset of $B$. A straightforward adaptation of Proposition~\ref{prop:smallqis} shows the bijection
$$\iota:\calN_b MD\pi^{b, U}_k(X,Y,\p)\to \calN_b MD\pi^{b}_k(X,Y,\p),$$
where $\calN_b  MD\pi^{b, U}_k(X,Y,\p)$ is defined in analogy with Definition~\ref{def:smallchain}. 

Given any loop $\varphi\in \calN_b MD\Gamma^{b, U}_k(X,Y,\p)$ we choose a representative $\{(C_j,g_j)\}_{j\in J}$ such that $g(C_j)$ is contained in $S_{i(j)}$ for a certain $i(j)$. Define $\rho'(\sigma):=\{(C_j,f_{i(j)}\comp g_j)\}_{j\in J}$. A straightforward application of the triangle inequality shows that $\rho'(\sigma)$ is a well defined element in $MD \Gamma^{b}_k(X,Y,\p)$. Applying the same procedure to the homotopies we get a well defined mapping 
$$\rho':\calN_b MD\pi^{b,U}_k(X,Y,\p)\to MD\pi^{b}_k(X,Y,\p).$$
Define $\rho:=\rho'\comp\iota^{-1}$. The mapping 
$$MD\pi^{b}_k(X,Y,\p)\to \calN_b MD\pi^{b}_k(X,Y,\p)$$
induced by the inclusion is clearly a left-inverse of $\rho$. It is also a right inverse because given $\varphi\in \calN_b MD\Gamma^{b, U}_k(X,Y,\p)$, the weak $b$-maps $\varphi$ and $\rho'\comp\varphi$ are $b$ equivalent due to Condition~(a) of Definition~\ref{def:bclosechains}.
\endproof

For the next lemma recall the definition of horn neighborhood from~\cite{MDH}:

\begin{definition}[Horn Neighborhood]
\label{def:sphhorn}
Let $X$ be a subanalytic germ embedded in $\RR^n$. We assume the vertex of the cone to be the origin in $\RR^n$. Let $b\in\RR^{+}$. The {\em  $b$-horn neighborhood} of amplitude $\eta$ of $X$ in $\RR^n$ is the union 
$$\calH_{b,\eta}(X):=\bigcup_{x\in X}B(x,\eta \|x\|^b).$$

\end{definition}

\begin{lema}
\label{lem:directlimit2}
With the same notations as above, for any fixed $\eta>0$, the set $\calN_{b}MD\pi^{b}_k(X,Y,\p)$ is in a bijection with the direct limit of sets 
$$\lim_{\stackrel{\longrightarrow}{b'>b}} MD\pi^{b'}_k(\calH_{b',\eta}(X),\calH_{b',\eta}(Y),\p,d_{e}).$$
\end{lema}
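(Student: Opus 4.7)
The plan is to construct mutually inverse maps $\Phi$ from the direct limit to $\calN_b MD\pi^b_k(X,Y,\p)$ and $\Psi$ in the opposite direction, using essentially the same underlying weak maps on both sides but viewed through different lenses.

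For $\Phi$, given a representative $[\varphi]\in MD\pi^{b'}_k(\calH_{b',\eta}(X),\calH_{b',\eta}(Y),\p,d_e)$ for some $b'>b$, I claim the same weak $b'$-map $\varphi:C(I^k)\to \calH_{b',\eta}(X)\subset \RR^n$ represents an element of $\calN_b MD\pi^b_k(X,Y,\p)$. Indeed, Remark~\ref{rem:b'weak} shows $\varphi$ is also a weak $b$-map. The l.v.a.\ property gives $\|\varphi(ty,t)\|\leq Kt$, so if $x\in X$ is closest to $\varphi(ty,t)$ then $\|x\|\leq 2Kt$ for small $t$, hence $d_e(\varphi(ty,t),X)\leq \eta(2K)^{b'}t^{b'}=o(t^b)$, establishing condition (a) of Definition~\ref{def:bclosechains}. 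The boundary conditions translate immediately since $\sim_{b'}$ is stronger than $\sim_b$ and since a point landing in $\calH_{b',\eta}(Y)$ is automatically $b$-equivalent to a point in $Y$. One checks that the transition morphisms in the direct system (for $b'>b''>b$, interpreting a weak $b'$-map as a weak $b''$-map with values in $\calH_{b'',\eta}(X)\supset\calH_{b',\eta}(X)$) land on the same element of $\calN_b MD\pi^b_k$, so $\Phi$ is well-defined on the direct limit.

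For $\Psi$, given $\varphi\in\calN_b MD\Gamma^b_k(X,Y,\p)$ with representative $\{(C_j,f_j)\}_{j\in J}$, the key input is the \L ojasiewicz inequality for subanalytic functions. Each of the following functions of $t$ is subanalytic, vanishes faster than $t^b$ at $0$ by hypothesis, and hence is $O(t^\alpha)$ for some $\alpha>b$: the function $\max_{y\in I^k} d_e(\varphi(ty,t), X)$ arising from condition~(a); for each pair $j_1,j_2\in J$, the supremum of $d_e(f_{j_1}(x), f_{j_2}(x))$ over the slice at level $t$ of $C_{j_1}\cap C_{j_2}$, encoding the $b$-discontinuity of the weak map; and analogous quantities encoding the boundary conditions (b) and (c). Choosing $b'>b$ strictly smaller than the minimum of the finitely many exponents $\alpha$ so obtained, one sees on a small enough representative that $\varphi$ is a weak $b'$-map with image in $\calH_{b',\eta}(X)$ satisfying the boundary conditions in the stronger $\sim_{b'}$ sense, hence yielding a class in $MD\pi^{b'}_k(\calH_{b',\eta}(X),\calH_{b',\eta}(Y),\p,d_e)$. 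The image in the direct limit is independent of the auxiliary choice of $b'$, since a different admissible choice differs from this one by a transition morphism in the direct system. Applying the same \L ojasiewicz-based upgrade to a weak $b$-homotopy in $\calN_b$ shows that $\Psi$ descends to equivalence classes.

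The last step is to verify $\Phi\circ\Psi=\mathrm{id}$ and $\Psi\circ\Phi=\mathrm{id}$, which is essentially immediate: at the level of representatives both compositions send a weak map to itself, and the \L ojasiewicz upgrade used to define $\Psi$ is compatible with the inclusion used to define $\Phi$. The main obstacle I anticipate is justifying the \L ojasiewicz step: one must check that the sup-type functions listed above are genuinely subanalytic in $t$ (they are, since $\varphi$, the $f_j$ and the $C_j$ are subanalytic and the sups are taken over subanalytic families of fibres) and that the finitely many exponents $\alpha>b$ coming from the different conditions can be simultaneously beaten by a single $b'\in(b,\min\alpha)$. Everything else is routine bookkeeping.
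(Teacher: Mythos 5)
Your proof follows essentially the same route as the paper's: the forward map is the tautological inclusion of a $b'$-loop in a horn neighbourhood into $\calN_b MD\Gamma^b_k$, and the inverse (which the paper frames as surjectivity/injectivity of that map) is obtained by expanding the finitely many subanalytic sup-functions controlling the defect of $\varphi$ in the form $a_it^{b'_i}+\theta_i(t)$ with $b'_i>b$ and choosing $b''\in(b,\min_i b'_i)$ — exactly your Łojasiewicz upgrade, applied both to loops and to the connecting homotopies. The only minor difference is expository: you make explicit the subanalyticity of the sup-functions and give a crude horn-neighbourhood estimate for $\Phi$, both of which the paper leaves implicit; these are reasonable clarifications and do not change the argument.
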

\proof
Since we have the inclusion $\calH_{b_1,\eta}(X)\subset \calH_{b_2,\eta}(X)$ if $b_1>b_2$ we have natural maps
$MD\pi^{b_1}_k(\calH_{b_1,\eta}(X),\calH_{b_1,\eta}(Y),\p)\to MD\pi^{b_2}_k(\calH_{b_2,\eta}(X),\calH_{b_2,\eta}(Y),\p)$ defined to be the composition of 
$$MD\pi^{b_1}_k(\calH_{b_1,\eta}(X),\calH_{b_1,\eta}(Y),\p)\to MD\pi^{b_1}_k(\calH_{b_2,\eta}(X),\calH_{b_2,\eta}(Y),\p)\to MD\pi^{b_2}_k(\calH_{b_2,\eta}(X),\calH_{b_2,\eta}(Y),\p).$$
This forms the direct system. 

For any $b'>b$ any $n$-loop representing an element in $MD\pi^{b'}_k(\calH_{b',\eta}(X),\calH_{b',\eta}(Y),\p)$, and the $b'$-homotopies connecting them satisfy property $(a)$ of Definition~\ref{def:bclosechains}. So we have a homomorphism
$$\xi:\lim_{\stackrel{\longrightarrow}{b'>b}} MD\pi^{b'}_k(\calH_{b',\eta}(X),\calH_{b',\eta}(Y),\p)\to \calN_{b}MD\pi^{b}_k(X,Y,\p).$$

For any loop $\varphi=\{(C_j,g_j)\}_{j\in J}$ representing an element in $\calN_{b}MD\pi^{b}_k(X,Y,\p)$ we expand 
$$\max_{j\in J}\{\max\{d_e(g_j(ty,t),X):(y,t)\in C_j\}\}=a_1t^{b'_1}+\theta_1(t),$$ 
$$\max_{j,j'\in J}\{\max\{d_e(g_j(ty,t),g_{j'}(ty,t)):(y,t)\in C_j\cap C_{j'}\}\}=a_2t^{b'_2}+\theta_2(t),$$ 
$$\max_{j\in J}\{\max\{d_e(g_j(ty,t),Y):(y,t)\in C_j\cap\partial C(I^k)\}\}=a_3t^{b'_3}+\theta_3(t),$$ 
where $\theta_i(t)$ vanishes at $0$ to order higher than $b'_i$, and $b'_i>b$ for $i=1,2,3$. Then $\varphi$ represents an element in $MD\pi^{b''}_k(\calH_{b'',\eta}(X),Y,\p)$ for any $b''$ satisfying $b<b''<\min\{b'_1,b'_2,b'_3\}$. This shows the surjectivity of $\xi$. For the injectivity we apply the same argument to the homotopies.
\endproof

\begin{prop} 
\label{prop:directlimit3}
With the same notations as above, there is a bijection
$$\lim_{\stackrel{\longrightarrow}{b'>b}} MD\pi^{\infty}_k(\calH_{b',\eta}(X),\calH_{b',\eta}(Y),\p,d_{e})\cong \lim_{\stackrel{\longrightarrow}{b'>b}} MD\pi^{b'}_k(\calH_{b',\eta}(X),\calH_{b',\eta}(Y),\p,d_{e}).$$
\end{prop}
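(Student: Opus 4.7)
The natural map LHS$\,\to\,$RHS is induced by the inclusion of continuous l.v.a.\ subanalytic maps into weak $b'$-maps; it is compatible with both direct systems and passes to a map of direct limits. The plan is to construct the inverse via the interpolation procedure announced in the introduction: given a weak $b'$-map landing in a $b'$-horn neighborhood, produce a continuous l.v.a.\ map landing in a slightly larger horn neighborhood and representing the same class.

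For surjectivity, start with a representative $\varphi=\{(C_j,f_j)\}_{j\in J}$ of a class in $MD\pi^{b'}_k(\calH_{b',\eta}(X),\calH_{b',\eta}(Y),\p,d_e)$. Take a subanalytic triangulation of $C(I^k)$ simultaneously adapted to the cover $\{C_j\}$, to the cone structure, and to $C(\partial I^k)$ and $C(I^{k-1}\times\{1\})$; after a preliminary modification within the $b'$-equivalence class (using Lemma~\ref{lem:proyY} on the boundary faces) the boundary and base-point conditions can be arranged to hold exactly. For every closed simplex $\sigma$ choose an index $j(\sigma)\in J$ with $\sigma\subseteq C_{j(\sigma)}$, and using the subanalytic barycentric partition of unity $\{\psi_\sigma\}$ on a further subdivision define
$$g(x):=\sum_\sigma \psi_\sigma(x)\, f_{j(\sigma)}(x),$$
the convex combination being computed in the ambient $\RR^n$. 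The weak $b'$-compatibility $f_{j_1}\comp\q\sim_{b'} f_{j_2}\comp\q$ on $C_{j_1}\cap C_{j_2}$, made uniform by subanalyticity and compactness of the link, amounts to the pointwise bound $\|f_{j_1}(y,t)-f_{j_2}(y,t)\|=o(t^{b'})$; consequently for any $b''$ with $b<b''<b'$ the map $g$ is continuous, subanalytic and l.v.a., lands in $\calH_{b'',\eta}(X)$, and the simplex-wise straight-line homotopy $H_s:=(1-s)\varphi+s\,g$ is a weak $b''$-homotopy in $\calH_{b'',\eta}(X)$ relative to $C(\partial I^k\setminus I^{k-1}\times\{1\})$ and preserving the inclusion into $\calH_{b'',\eta}(Y)$. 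Thus $[\varphi]=[g]$ in $MD\pi^{b''}_k(\calH_{b'',\eta}(X),\calH_{b'',\eta}(Y),\p)$, so the class $[g]\in MD\pi^\infty_k(\calH_{b'',\eta}(X),\calH_{b'',\eta}(Y),\p)$ is a preimage of $[\varphi]$ at level $b''$ of the right-hand direct system.

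For injectivity, apply the same recipe to a weak $b'$-homotopy $H:C(I^{k+1})\to\calH_{b',\eta}(X)$ between two continuous l.v.a.\ $(\infty,k)$-loops $\varphi_0,\varphi_1$. The only extra care is that the subanalytic triangulation of $C(I^{k+1})$ should extend prescribed triangulations of $\iota_0(C(I^k))$ and $\iota_1(C(I^k))$ compatible with $\varphi_0$ and $\varphi_1$, and the partition of unity should be concentrated on the index corresponding to the already-continuous piece along those two faces. The resulting $\widetilde H$ is then a continuous l.v.a.\ homotopy in $\calH_{b'',\eta}(X)$ satisfying $\widetilde H\comp\iota_0=\varphi_0$ and $\widetilde H\comp\iota_1=\varphi_1$ exactly, so $[\varphi_0]=[\varphi_1]$ in $MD\pi^\infty_k(\calH_{b'',\eta}(X),\calH_{b'',\eta}(Y),\p)$, closing injectivity at level $b''$ of the left-hand direct system.

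The main obstacle is the interpolation step itself: constructing $g$ and $\widetilde H$ so that they are simultaneously continuous, subanalytic, l.v.a., land in the slightly enlarged horn neighborhood, and respect the boundary and base-point data exactly. This requires the existence of subanalytic triangulations compatible with multiple data (the cover $\{C_j\}$, the cone structure preserving $t$-levels, and the boundary strata), a subanalytic partition-of-unity built from iterated barycentric subdivisions, and the elementary convex-combination estimate that a weighted average of points of pairwise distance $o(t^{b'})$ remains within $o(t^{b'})$ of each summand, so that the horn scale drops only from $b'$ to an arbitrary $b''\in(b,b')$.
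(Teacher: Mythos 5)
Your high‑level strategy matches the paper's exactly: triangulate so the cover is simplicial, form a convex combination of the local branches using a subordinate partition of unity, and invoke the elementary estimate (Lemma~\ref{lem:puntos}) that a convex combination of $b'$-equivalent points in $\dot{\calH}_{b',\eta}(X)$ stays in $\dot{\calH}_{b'',\eta}(X)$ for any $b''<b'$ and is $b'$-equivalent to the summands. This is indeed the ``interpolation trick'' the proof uses.

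However, the central formula $g(x):=\sum_\sigma \psi_\sigma(x)\,f_{j(\sigma)}(x)$ is not well-defined as written, and this is not a cosmetic issue but the heart of the matter. The barycentric weight $\psi_\sigma$ is supported on the open star of (the barycenter of) $\sigma$, which for a non-maximal $\sigma$ spills over into every top-dimensional simplex containing $\sigma$ — not just into $C_{j(\sigma)}$. So $f_{j(\sigma)}$ gets evaluated at points outside its domain $C_{j(\sigma)}$. Concretely: cover $I$ by $C_1=[0,\tfrac12]$, $C_2=[\tfrac12,1]$ and pick $j(\{\tfrac12\})=1$; then $\psi_{\{1/2\}}(x)>0$ for $x$ slightly above $\tfrac12$, yet $f_1(x)$ is undefined there. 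The convex-combination step therefore presupposes an \emph{extension} of each $f_j$ beyond $C_j$, in a controlled way that preserves continuity, subanalyticity, the l.v.a. property, and the $b'$-closeness to the other branches; this extension is not provided in the proposal, and it is precisely what the paper's ``skeleton thickening decomposition'' supplies. The paper replaces each $[T_i]$ by an enlarged region $R_i$ (equation~(\ref{eq:Ri})) built from the pieces $[f^{*T_j}]$ of neighboring simplices, extends $g_i$ to a map $\overline{g}_i$ on all of $C(R_i)$ by making it constant along the fibers of the orthogonal trivializations $\Gamma_{f,T_j}$ (equation~(\ref{eq:gi})), checks that $\{(C(R_i),\overline{g}_i)\}$ is still a weak $b'$-map representing the same class, and only then averages by a partition of unity subordinate to $\{\dot R_i\}$. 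Without this (or an equivalent extension device) your formula does not define a map, so the surjectivity step — and by the same token the injectivity step applied to homotopies — has a genuine gap. Your closing paragraph correctly flags this as ``the main obstacle,'' but leaves it unresolved.
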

The main part of the proof consists in given a weak $b'$-loop on the right hand side limit, finding a $b'$-equivalent continuous one on the left hand side. We perform an interpolation trick for that. We need some preliminary work.

\begin{lema}
\label{lem:puntos}
Consider $b'>1$.
Let $\p_1,...,\p_l:(0,\epsilon)\to\dot \calH_{b',\eta}(X)$ be $b'$-equivalent points. 
A point of the form $\sum_i \lambda_i\p_i$ with $\sum_i\lambda_i=1$ is contained in $\dot \calH_{b'',\eta}(X)$ for every $b''<b$ and it is $b'$-equivalent to the $\p_i$.
\end{lema}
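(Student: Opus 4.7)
The plan is to establish both assertions via direct applications of the triangle inequality, treating them separately. For the $b'$-equivalence, write $q(t) := \sum_i \lambda_i\, \p_i(t)$ and, for any fixed $j$, use $\sum_i \lambda_i = 1$ to rewrite
$$\|q(t) - \p_j(t)\| = \Big\|\sum_i \lambda_i (\p_i(t) - \p_j(t))\Big\| \leq \sum_i |\lambda_i|\, \|\p_i(t) - \p_j(t)\|.$$
Dividing by $t^{b'}$ and taking $t\to 0$, each term on the right tends to $0$ by hypothesis, hence $q \sim_{b'} \p_j$ for every $j$.

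For the containment in $\dot\calH_{b'',\eta}(X)$ when $b''<b'$, I would exploit that $\p_1(t) \in \dot\calH_{b',\eta}(X)$ to pick, for each small $t$, a witness $x(t) \in X$ with $\|\p_1(t) - x(t)\| < \eta\,\|x(t)\|^{b'}$. Since $\p_1$ is l.v.a.\ and $b'>1$ forces $\eta\|x(t)\|^{b'}$ to vanish faster than $\|x(t)\|$, one has $\|x(t)\| \sim t$ for small $t$, with uniform constants. The triangle inequality then yields
$$\|q(t) - x(t)\| \leq \|q(t) - \p_1(t)\| + \|\p_1(t) - x(t)\| < o(t^{b'}) + \eta\,\|x(t)\|^{b'},$$
using the $b'$-equivalence proved in the first paragraph to control the first summand.

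To conclude, I would factor $\eta\,\|x(t)\|^{b'} = \eta\,\|x(t)\|^{b''}\cdot\|x(t)\|^{b'-b''}$ and observe that, because $b'-b''>0$ and $\|x(t)\|\to 0$, this is $o(\|x(t)\|^{b''})$. The same comparability $\|x(t)\|\sim t$ shows $o(t^{b'}) = o(\|x(t)\|^{b''})$. Combining these two estimates, for sufficiently small $t$ we obtain $\|q(t) - x(t)\| < \eta\,\|x(t)\|^{b''}$, which places $q(t)$ in the open ball $B(x(t), \eta\|x(t)\|^{b''}) \subseteq \dot\calH_{b'',\eta}(X)$, as required.

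The argument is essentially routine. The only subtle point is the use of the hypothesis $b'>1$, which guarantees that the approximating point $x(t)\in X$ has norm comparable to $t$ and so allows one to freely interchange $t$ and $\|x(t)\|$ in the asymptotic estimates; without this one would need a separate argument to control $\|x(t)\|$ from below. No continuous or subanalytic selection of $x(t)$ is needed since the lemma only concerns pointwise membership.
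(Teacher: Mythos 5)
Your proposal is correct and follows essentially the same route as the paper: the $b'$-equivalence part is the same triangle-inequality computation, and the containment argument is likewise a triangle inequality through $\p_1$, using the l.v.a.\ property together with $b'>1$ to control the norm of the approximating point in $X$. The only superficial difference is that you track an explicit witness $x(t)\in X$ realizing the horn condition, whereas the paper estimates $d(X,q(t))$ directly by $\eta\|q(t)\|^{b'}+o(t^{b'})$ and then deduces membership in $\dot\calH_{b'',\eta}(X)$; these are two phrasings of the same argument.
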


\proof
Since the points are $b$ equivalent there exists a positive $A$ such that $||p_i(t)||=At+\theta_i(t)$, with $\lim_{t\to0} \theta_i(t)/t^{b'}=0$ for all $i$, and fixed $\lambda_i$ such that $\sum_i\lambda_i=1$ we have $||\sum_i\lambda_i p_i(t)||=At+\theta(t)$ with $\lim_{t\to0} \theta(t)/t^{b'}=0$.

$$d(X,\sum_{i=1}^l\lambda_i\p_i(t))\leq d(X,\p_1(t))+d(\p_1(t),\sum_{i=1}^l\lambda_i\p_i(t))\leq$$
$$\eta||\p_1(x)||^b+\sum\lambda_i d(\p_1(t),\p_i(t))\leq \eta||\sum\lambda_i\p_i(x)||^b+||\rho(t)||+\sum\lambda_i d(\p_1(t),\p_i(t),$$
where $\lim_{t\to0} \rho(t)/t^{b'}=0$.

The above inequality implies that $\sum_i \lambda_i\p_i$ is contained in $\dot \calH_{b{''},\eta}(X)$ for every $b''<b'$.
\endproof

\textsc{Construction: the skeleton thickening decomposition}. 

For a set of points $A$ we denote by $[A]$ its convex hull. A $n$-simplex in $\RR^N$ is the convex hull of $n+1$ affinely independent points. More generally a polytope is the convex hull of finitely many points. A piecewise linear simplicial complex $|K|$ is a locally finite union of simplexes such that any finite intersection of simplexes is a simplex. A simplex is maximal if it is not contained in any other different simplex. Now we decompose $|K|$ as a union of subsets which refines the decomposition of $|K|$ into maximal simplexes.  


We start defining the decomposition for a single simplex. Let $T$ be the set of vertices of a $k$ dimensional simplex. For any subsets $f\subseteq f'\subseteq T$ we define


$$b_f\ \text{the\ baricenter\ of}\ [f]$$


$$ v^f:=\frac{1}{2}b_f+\frac{1}{2}v \ \text{\ \ for\ any }\ v\in f$$

$$f^{f'}:=\{v^{f'}:v\in f\}\ \text{for\  any }\  f\subseteq f'\subseteq T$$

$$f^{*f'}:= \bigcup_{f'':f\subseteq f''\subseteq f'}f^{f''}$$

\begin{figure}

\includegraphics[scale=0.15]{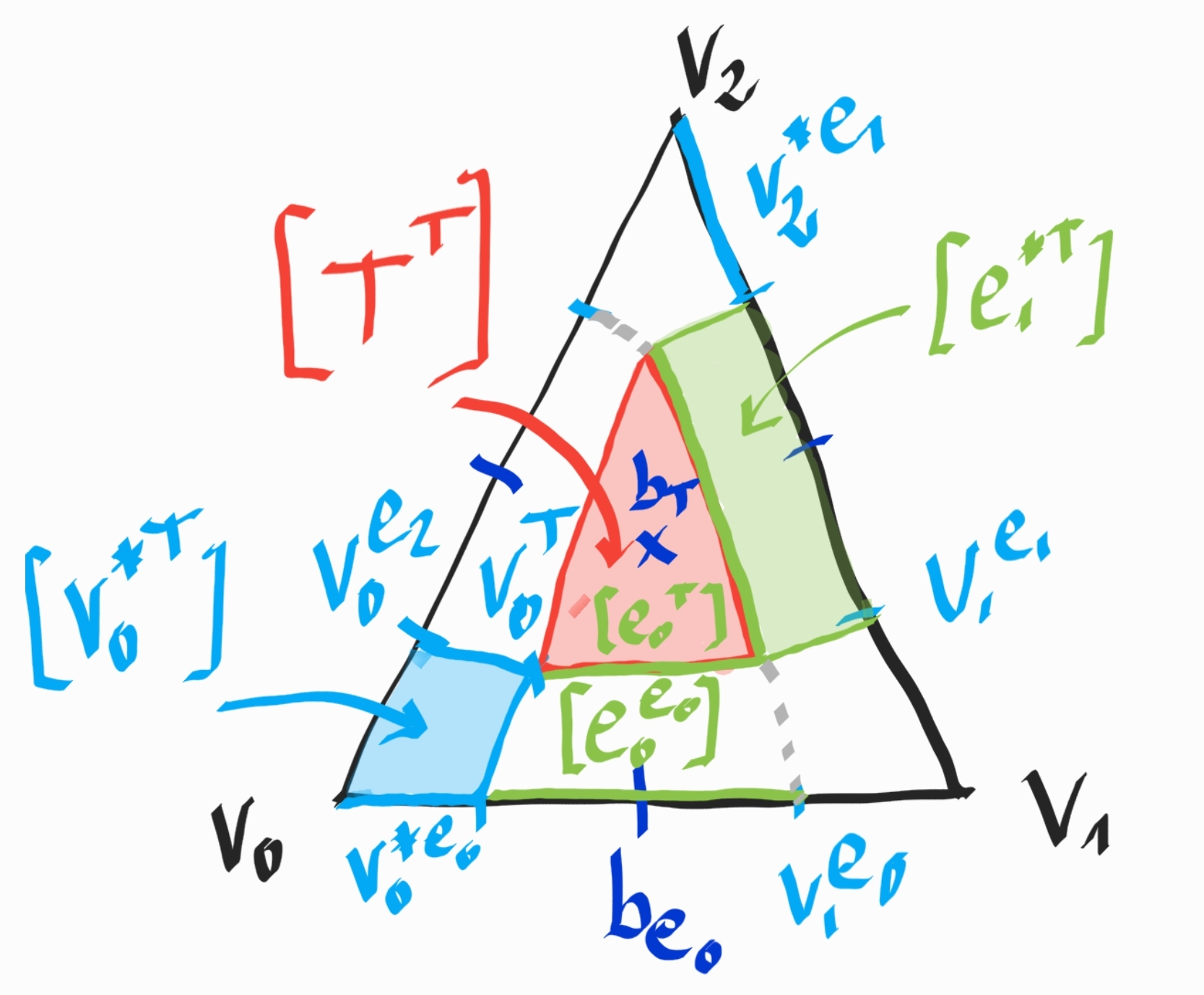}
\caption{Partial decomposition of the simplex $T=\{v_0,v_1,v_2\}$ where the edges are $e_0=[v_0,v_1 ]$, $e_1=[v_1,v_2]$ and $e_2=[v_2,v_0]$.}\label{fig:descomp}
\end{figure}

Note that $[f^{f'}]$ is a simplex of the same dimension as $[f]$,  strictly contained in the interior of $[f']$ and with faces parallel to the ones of $[f]$ (see Figure \ref{fig:descomp}). In particular $[T^T]$ is a simplex of dimension $k$ completely contained in $[T]$. 

Observe that, given $f\subset f'$, 
the sets $[f^{*f'}]$ are of the same dimension than $[f']$. There are $2^{s'-s}$ possible subsets $f''$ with  $f\subseteq f''\subseteq f'$ if $s'=dim([f'])$ and $s=dim([f])$.

We have a decomposition
\begin{equation}\label{eq:decomp}[T]=[T^T] \cup \bigcup_{f:f\subsetneq T} [f^{*T}]
\end{equation}
where 
$f$ runs over the set of faces of $T$ of dimension less than $k$. It can be checked that the sets in the decomposition are polytopes of dimension $k$ and only intersect along polytopes (faces) of smaller dimension. 

Let us also see that every $[f^{*T}]$ is fibred over $[f^{f}]$ with fiber a cube $[0,1]^{k-s}$ with  $s=dim([f])$. Consider in $[T]$ the pushforward metric of the euclidean metric in the standard $k$-simplex $[(1,...,0),....,(0,...,1)]\subset\RR^{k+1}$ by the mapping given by the barycentric coordinates in $[T]$. Consider the orthogonal projection $\pi$ from $[T]$ to $[f]$. It can be easily checked by elementary geometry, that the restriction of $\pi$ to $[f^{T}]$ is a bijection onto $[f^f]$, and that $\pi$ restricted to $[f^{*T}]$ is a trivial bundle with base $[f^f]$ (or $[f^{T}]$) and fiber a cube of the corresponding dimension. So, we have orthogonal trivializations 
$$\Gamma_{f,T}:  [f^{*T}]\to [f^{T}]\times [0,1]^{k-s}$$
where the first projection to $[f^{T}]$ coincides with the orthogonal projection in $[T]$ to $[f]$ with respect to the mentioned metric. Note that by construction if $f\subseteq f'\subseteq T$ and $x\in [f^{*T}]\cap [{f'}^{*T}]$ then the first projections of $\Gamma_{f,T}(x)$ and $\Gamma_{f',T}(x)$ to $[f^{T}]$ and $[{f'}^{T}]$ coincide.

We call the decomposition~(\ref{eq:decomp}) the {\em skeleton thickening decomposition} of $[T]$, and the system of mappings $\Gamma_{f,T}$ is the {\em associated system of trivializations}.

Let $[T]$ be a $k$-simplex and let $[T']$ be a $l$-dimensional face of $[T]$. The intersection of the pieces of the skeleton thickening decomposition of $[T]$ with $[T']$ gives induces the skeleton thickening decomposition of $[T']$. The associated system of trivializations of the skeleton thickening decomposition of $[T']$ coincide with the relevant trivializations of $[T]$, forgetting a $[0,1]^{k-l}$ factor on the right hand side.



Once we have defined the decomposition for a simplex, we define the {\em skeleton thickening decomposition} of $|K|$ decomposing every maximal simplex $[T]$ as in Equation~(\ref{eq:decomp}). We consider the corresponding trivialization mappings $\Gamma_{f,T}$.

Finally, for each symplex $[T]$ of $|K|$ we consider the unique affine homeomorphism
\begin{equation}
\label{eq:dilation}
\tau_T:[T^{T}]\to [T].
\end{equation}

\proof[Proof of Proposition~\ref{prop:directlimit3}]
The natural maps $MD\pi^{\infty}_k(\calH_{b',\eta}(X),\calH_{b',\eta}(Y),\p,d_{e})\to MD\pi^{b'}_k(\calH_{b',\eta}(X),\calH_{b',\eta}(Y),\p,d_{e})$ induce a map
$$\lim_{\stackrel{\longrightarrow}{b'>b}} MD\pi^{\infty}_k(\calH_{b',\eta}(X),\calH_{b',\eta}(Y),\p,d_{e})\to \lim_{\stackrel{\longrightarrow}{b'>b}} MD\pi^{b'}_k(\calH_{b',\eta}(X),\calH_{b',\eta}(Y),\p,d_{e}).$$

We start showing its surjectivity. Consider a $b'$-map $\phi=\{(C_i,g_i)\}_{i\in I}$ representing an element of $MD\pi^{b'}_k(\calH_{b',\eta}(X),\calH_{b',\eta}(Y),\p,d_{e})$. We claim that for any $b<b''<b'$ there exists a continuous subanalytic $k$-loop $\phi':C(I^k)\to \calH_{b'',\eta}(X)$ that is weak $b''$-homotopic to $\phi$ relative to $C(\partial I^k\setminus I^{k-1}\times\{1\})$ by a homotopy preserving the inclusion of $C(\partial I^k)$ into $\calH_{b'',\eta}(Y)$. The claim obviously shows the surjectivity. Let us prove it now. 

By refining we can assume that $C_i\cap C_j$ have empty interior. By the subanalytic triangulation theorem and Remark~\ref{rem:lvareduction} there exists a piecewise linear simplicial complex $|K|$ such that $|K|=I^k$ and 
a subanalytic homeomorphism $h:C(|K|)\to C(I^k)$ of the form $h(x,t)=(h_1(x,t),t)$ in coordinates of $C(I^k)$, that gives a subanalytic triangulation on $C(I^k)$ refining $\{C_i\}$, the natural strata of $I^k$ and $Im(\p)$. 

We will prove the claim for $\phi\comp h$ instead of $\phi$. The proof for $\phi$ consists in composing the obtained continuous $k$-loop and homotopy with $h^{-1}$. So, from now on we abuse notation and identify $\phi$ with $\phi\comp h$, and assume that the decomposition of $I^k$ given by the $C_i$'s coincides with the cone of the piecewise linear triangulation of $I^k$ induced by $|K|$. 




Consider the skeleton thickening decomposition of $|K|$, together with the trivialization mappings associated with the maximal simplexes and their faces. 

Since we are assuming that the decomposition $C(I^k)=\cup_{i\in I}C_i$ is the cone of the decomposition of $|K|$ into maximal simplexes, we have that the maximal simplexes are indexed by $I$. Given any $i\in I$ we define 
\begin{equation}\label{eq:Ri}R_i:=[T_i]\cup \bigcup_{f\subset T_i}\bigcup_{T_j\supset f}[f^{*T_j}].\end{equation}
 
We define a a weak $b'$-map $(C(R_i),\overline{g}_i)$ of $\phi$ as follows:
\begin{equation}\label{eq:gi}  
\overline{g}_i(x,t) = 
     \begin{cases}
       g_i(\tau_{T_i}(x),t) &\quad\text{if } x\in [T_i^{T_i}]\ \ \ \ \  \\
       
    g_i(\tau_{T_i}(y),t)&\quad\text{if} \ x\in [f^{*T_i}] \ \text{and}\   \Gamma_{f,T_i}(x)=(y,u)\\
    g_i(\tau_{T_i}(\overline{y}),t)&\quad\text{if}\  x\in [f^{*T_j}] \ \text{and}\    \Gamma_{f,T_j}(x)=(y,u)\\
     \end{cases}
\end{equation}
where  $\overline{y}\in [f^{T_i}]$ is the image of $y\in[f^{T_j}]$ by the compositions of the indentifications of $[f^{T_j}]$ with $[f^f]$ and $[f^f]$ with $[f^{T_i}]$ via the orthogonal projections in $[T_j]$ and $[T_i]$ respectively. It is obvious that $\overline{g}_i$ is continuous. 

Notice that for every $s$-dimensional face $f\in [T]$ and for every $j$ such that $T_j$ contains $f$, we have the product structure 
$$C(\Gamma_{f,T_j}): C([f^{*T_j}])\to C([f^{T_j}])\times [0,1]^{k-s},$$
and that $g_i$ is constant at the fibres of the composition of $C(\Gamma_{f,T_j})$ with the first projection. This, together with the fact that $\{(T_i,g_i)\}_{i\in I}$ is a weak $b$-map implies that $\overline{\phi}$ is a weak $b$-map.

Note that $\{([T_i], \overline{g}_i|_{[T_i]} )\}_i$ is a representative of $\overline{\phi}$, and that $\{([T_i], \overline{g}_i|_{[T_i]} )\}_i$ is clearly $b'$-homotopic to $\phi$. Then, $\overline{\phi}$ is $b'$-homotopic to $\phi$.

Choose a subanalytic partition of unity $\{\rho_i\}$ adapted to $\{\dot{R}_i\}$ in $I^k$. Then, we define $\phi'(x,t)=\sum_i\rho_i(x)\overline{g}_i(x,t)$. This map is continuous since every $\rho_i(x)\overline{g}_i(x)$ is continuous in $I^k$, its image is inside $\calH_{b'',\eta}(X)$ by Lemma~\ref{lem:puntos}, and it is  $b'$-equivalent to $\overline{\phi}$. This shows the claim and proves surjectivity.

Injectivity is proved with a similar argument applied to the homotopies.
\endproof

Now we are ready for the main result in this section:

\begin{theo}
\label{theo:maincomparacion}
Let $(X, Y, d_{e}, x_0,\p)$ be a pointed pair of metric subanalytic subgerms of $\RR^n$ with the outer metric. For $k\geq 1$ in the absolute case and $k\geq 2$ in the relative one the following assertion holds. For any $b<\infty$ there exists a $b_0>b$ such that for any $b'\in (b,b_0)$ there exists a positive $\epsilon_{b'}$ such that we have isomorphisms
$$MDH^b_k(X,Y;\ZZ)\cong H_k(\calH_{b',\eta}(X)\cap B_{\epsilon_{b'}}\setminus\{x_0\},\calH_{b',\eta}(Y)\cap B_{\epsilon_{b'}}\setminus\{x_0\};\ZZ),$$
$$MD\pi^b_k(X,Y,\p)\cong \pi_k(\calH_{b',\eta}(X)\cap B_{\epsilon_{b'}}\setminus\{x_0\},\calH_{b',\eta}(Y)\cap B_{\epsilon_{b'}}\setminus\{x_0\},\p(t_0)),$$
for $t_0$ small enough. 
\end{theo}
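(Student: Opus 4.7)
The plan is to concatenate the isomorphisms established in Proposition~\ref{prop:rhoiso}, Lemma~\ref{lem:directlimit2}, Proposition~\ref{prop:directlimit3} and Proposition~\ref{prop:MD homotopy at infinity}, and then to argue that the resulting direct system over $b' > b$ stabilizes on a small interval $(b, b_0)$. First, Proposition~\ref{prop:rhoiso} gives an isomorphism $MD\pi^b_k(X,Y,\p)\cong \calN_b MD\pi^b_k(X,Y,\p)$; Lemma~\ref{lem:directlimit2} rewrites this as $\lim_{b'>b} MD\pi^{b'}_k(\calH_{b',\eta}(X), \calH_{b',\eta}(Y), \p)$; and Proposition~\ref{prop:directlimit3} upgrades the superscript to $\infty$. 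By Proposition~\ref{prop:MD homotopy at infinity}, $MD\pi^{\infty}_k$ of a pair of outer-metric germs coincides with the classical homotopy group of its link, which, by the conical structure of subanalytic germs, is a deformation retract of the punctured germ intersected with any small enough ball. After these four identifications the right hand side becomes
\[
\lim_{\stackrel{\longrightarrow}{b'>b}} \pi_k\bigl(\calH_{b',\eta}(X)\cap B_{\epsilon_{b'}}\setminus\{x_0\},\,\calH_{b',\eta}(Y)\cap B_{\epsilon_{b'}}\setminus\{x_0\},\,\p(t_0)\bigr).
\]

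The final step is a stabilization argument. I would show that there exists $b_0>b$ such that for any $b<b''<b'<b_0$ the inclusion of pairs
\[
\bigl(\calH_{b',\eta}(X)\cap B_{\epsilon_{b'}}\setminus\{x_0\},\,\calH_{b',\eta}(Y)\cap B_{\epsilon_{b'}}\setminus\{x_0\}\bigr) \;\hookrightarrow\; \bigl(\calH_{b'',\eta}(X)\cap B_{\epsilon_{b''}}\setminus\{x_0\},\,\calH_{b'',\eta}(Y)\cap B_{\epsilon_{b''}}\setminus\{x_0\}\bigr)
\]
is a homotopy equivalence of pairs. The tool is a subanalytic strong deformation retraction of the larger horn neighborhood onto the smaller one, obtained by flowing toward $X$ through the nearest-point projection construction of Lemma~\ref{lem:retracciondiscontinua}, suitably rescaled at each radius $t$ by a controlled subanalytic amplitude and combined with the radial structure of the cone near $x_0$. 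Subanalytic triviality of the one-parameter family $\{\calH_{b',\eta}(X)\}_{b'}$ then provides a threshold $b_0$ beyond which no topological transition occurs, so the direct system is already constant on $(b,b_0)$ and the limit coincides with any one of its terms.

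The main obstacle is this last step. Producing the subanalytic retraction requires the amplitude of the flow to be simultaneously large enough to push $\calH_{b',\eta}$ into $\calH_{b'',\eta}$, small enough to preserve the pair structure (i.e.\ to send the horn of $Y$ into the horn of $Y$), and compatible with the base point $\p(t_0)$; the presence of two varying parameters (the exponent $b'$ and the radius $\epsilon_{b'}$) complicates the bookkeeping. Once this is in place, the homology statement follows by the same chain, substituting for Proposition~\ref{prop:rhoiso}, Lemma~\ref{lem:directlimit2}, Proposition~\ref{prop:directlimit3} and Proposition~\ref{prop:MD homotopy at infinity} their MD-Homology analogues from~\cite{MDH}, together with the Birbrair conjecture proved there for the $\infty$-comparison.
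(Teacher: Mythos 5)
Your first four identifications --- through Proposition~\ref{prop:rhoiso}, Lemma~\ref{lem:directlimit2}, Proposition~\ref{prop:directlimit3} and Proposition~\ref{prop:MD homotopy at infinity} combined with the conical structure theorem --- reproduce exactly the chain the paper uses, so that part is fine. The divergence is in the stabilization step, and this is where your sketch is not yet a proof. The paper does not try to construct a deformation retraction between nested horn neighborhoods at all; it invokes the invertible cobordism techniques already developed in~\cite{MDH} (Theorem~107 and Corollary~118), which handle the stabilization of the direct system by a cobordism argument rather than by explicit retraction. Your approach, if it could be made to work, would be more geometric and perhaps more self-contained, but as stated it has a concrete gap.

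The gap is the continuity of the flow. You propose to ``flow toward $X$ through the nearest-point projection construction of Lemma~\ref{lem:retracciondiscontinua}''. But that lemma explicitly produces a \emph{discontinuous} retraction: a finite partition $\{S_k\}_k$ with maps $f_k : S_k \to Q$ that are continuous only piecewise, not globally. A strong deformation retraction of a pair is a global continuous homotopy, and nothing in your sketch explains how the discontinuities across the boundaries of the $S_k$ are to be repaired. Rescaling the amplitude and exploiting the radial cone structure does not remove a jump discontinuity. This is precisely the sort of difficulty that makes the paper's Mayer--Vietoris/invertible-cobordism route preferable: it sidesteps the need for a continuous retraction by working cobordism-by-cobordism. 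A secondary issue is your appeal to ``subanalytic triviality of the one-parameter family $\{\calH_{b',\eta}(X)\}_{b'}$'': for irrational $b'$ the horn neighborhood is not subanalytic but only definable in $\RR_{an}^{\RR}$, and the trivialization theorem you need (Hardt-type triviality in the parameter $b'$) would have to be stated and justified in that o-minimal structure; as written the claim is not supported. Until the continuous retraction (or a replacement for it) is produced, the stabilization --- and hence the existence of $b_0$ --- is not established by your argument.
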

\proof
The MD Homology statement has been proved in~\cite{MDH}.

By Proposition~\ref{prop:directlimit3}, Lemma~\ref{lem:directlimit2} and Proposition~\ref{prop:rhoiso} we have the isomorphism 
$$MD\pi^b_k(X,Y,\p)\cong \lim_{\stackrel{\longrightarrow}{b'>b}} MD\pi^{\infty}_k(\calH_{b',\eta}(X),\calH_{b',\eta}(Y),p,d_{e}).$$
By Proposition~\ref{prop:MD homotopy at infinity} and the conical structure theorem for any $b'$ there exists $\epsilon_{b'}$ so that we have 
$$MD\pi^\infty_k(\calH_{b',\eta}(X),\calH_{b',\eta}(Y),\p)\cong\pi_k(\calH_{b',\eta}(X)\cap B_{\epsilon_{b'}}\setminus\{x_0\},\calH_{b',\eta}(Y)\cap B_{\epsilon_{b'}}\setminus\{x_0\},p(t_0)).$$

The rest of the proof is an easy adaptation of the invertible cobordism techniques of the proof of Theorem~107 and Corollary~118 of~\cite{MDH}.
\endproof

\begin{remark}\label{re:interpol}
Note that the skeleton thickening decomposition can be used as in the proof of \ref{prop:directlimit3} to solve the following convex interpolation problem in a simplicial complex: \emph{Let $K$ be a finite simplicial complex of any dimension.  Assume you have continuous functions  $\{g_{T_i}\}_i$, with values in a convex set $A$, where each $g_{T_i}$ is defined over the simplex $[T_i^{T_i}]$ (see Figure \ref{fig:descomp} and the skeleton thickening decomposition) with $T_i$ a  maximal dimensional simplex of $K$. Find a continuous function defined over the whole $|K|$ that extends the family of functions $\{g_{T_i}\}_{T_i}$. }

%

\end{remark}

\subsection{Finiteness properties for germs with the inner or outer metric and comparision with their tangent cones}\label{sec:inn_out}
After the comparison theorem is established, easy adaptations of the corresponding results for the MD-homology in \cite{MDH} yield the following set of results:
\begin{cor}
\label{cor:finitudes}
Let $(X, Y, d,x_0,\p)$ be a pointed pair of metric subanalytic germs with the inner or the outer metric. Assume $k\geq 1$ for absolute homotopy groups and $k\geq 2$ for relative ones. The following assertions hold
\begin{enumerate}[(i)]
 \item The groups $MD\pi^b_k(X,Y,\p)$ are finitely presented.
 \item There is a finite set of rational numbers $\{b_1,...,b_n\}$ such that if $[b,b']$ does not intersect it then the natural homomorphism
$$MD\pi^{b'}_k(X,Y,\p)\to MD \pi^{b}_k(X,Y,\p)$$
for any $k$ as above.
\item The group $MD\pi^1_k(X,d_{out},\p)$ (for the outer metric) is isomorphic to the $k$-th homotopy group of the punctured tangent cone of $X$.
\item The group $MD\pi^1_k(X,d_{inn},\p)$ (for the inner metric) is isomorphic to the $k$-th homotopy group of the punctured Gromov tangent cone of $X$ (see~\cite{BerLyt07} for the definition).
\end{enumerate}
\end{cor}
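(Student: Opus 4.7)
The plan is to reduce each of the four assertions to a statement about the ordinary homotopy of horn neighbourhoods, using the comparison isomorphism of Theorem~\ref{theo:maincomparacion} in the outer case, and a re-embedding trick to reduce the inner case to the outer one (as already announced in the introduction and as carried out for MD-homology in~\cite{MDH}). Concretely, for any $b<\infty$ I would pick $b'\in (b,b_0)$ as in Theorem~\ref{theo:maincomparacion} and work throughout with the subanalytic pair
\[
(H^{b'},H^{b'}_Y):=\bigl(\calH_{b',\eta}(X)\cap B_{\epsilon_{b'}}\setminus\{x_0\},\ \calH_{b',\eta}(Y)\cap B_{\epsilon_{b'}}\setminus\{x_0\}\bigr),
\]
whose relative $\pi_k$ is canonically identified with $MD\pi^{b}_k(X,Y,\p)$.

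For (i), the pair $(H^{b'},H^{b'}_Y)$ is a bounded subanalytic open pair, hence by the subanalytic triangulation theorem it has the homotopy type of a finite CW-pair. Therefore $\pi_1$ is finitely presented, and, taking the action of $\pi_1$ into account, each higher $\pi_k$ is a finitely presented $\ZZ[\pi_1]$-module; in the abelian case $k\geq 2$ (respectively $k\geq 3$ for the relative case) this yields finite presentation as an abelian group exactly as in the MD-homology argument of~\cite{MDH}. For (ii), I would regard $b'\mapsto (H^{b'},H^{b'}_Y)$ as a subanalytic family in the parameter $b'$ and apply Hardt's triviality theorem in the o-minimal setting: this provides a finite partition of $(0,\infty)$ into subanalytic pieces on each of which the homotopy type of the pair, and hence $MD\pi^{b}_k$, is constant. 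The breakpoints are roots of finitely many subanalytic equations derived from the Puiseux data of $X$ and $Y$, hence rational, reproducing the argument for MD-homology in~\cite{MDH}.

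For (iii), I would show that as $b'\searrow 1$ the $b'$-horn neighbourhood $\calH_{b',\eta}(X)\cap B_\epsilon\setminus\{0\}$ deformation retracts onto the punctured tangent cone $C_0(X)\setminus\{0\}$, so that its homotopy groups agree with those of the tangent cone; this is the same radial-rescaling argument used for MD-homology, combined with the comparison theorem to conclude. For (iv), I would invoke the re-embedding of $(X,d_{inn})$ as a subanalytic germ $(X',d_{out})$ in some $\RR^N$ realising the inner metric of $X$ as the outer metric of $X'$, as used earlier in the paper for the $\infty$-comparison and the Hurewicz theorem; the punctured Gromov tangent cone of $(X,d_{inn})$ is by definition the punctured outer tangent cone of $(X',d_{out})$, so (iv) follows from (iii) applied to $(X',d_{out})$.

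I expect the main obstacle to be (ii), specifically the rationality assertion: showing that the breakpoints of the family $\{MD\pi^{b}_k\}_b$ form a finite \emph{rational} set requires controlling the exponents that arise when one writes the horn neighbourhoods and their Hardt trivialisations in terms of Puiseux-type expansions of the defining subanalytic data, exactly the delicate point that had to be addressed in~\cite{MDH}. Once this is handled, the remaining statements follow mechanically from the comparison theorem and routine transfers of the MD-homology arguments of~\cite{MDH} to the homotopy setting.
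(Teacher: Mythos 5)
Your proposal takes exactly the same approach as the paper: the paper's own proof of Corollary~\ref{cor:finitudes} is the single remark that, once Theorem~\ref{theo:maincomparacion} is in place, the assertions "follow from easy adaptations of the corresponding results for the MD-homology in~\cite{MDH}." You supply the right ingredients for those adaptations — horn neighbourhoods, subanalytic triangulation for finiteness, rationality via Puiseux data, the limit $b'\searrow 1$ for the tangent-cone statement, and the Birbrair--Mostowski re-embedding to reduce the inner case to the outer one.

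Two cautions. In (i), the inference "finitely presented $\ZZ[\pi_1]$-module $\Rightarrow$ finitely presented abelian group, exactly as in the MD-homology argument" does not go through. For a finite CW pair the homology groups are finitely generated over $\ZZ$, but higher homotopy groups need not be when $\pi_1$ is infinite (e.g.\ $\pi_2(S^1\vee S^2)\cong\ZZ[t,t^{-1}]$ is cyclic over $\ZZ[\pi_1]$ but not finitely generated over $\ZZ$); the homological argument therefore does not transfer verbatim. What the finite-CW argument yields for $k\geq 2$ is finite presentation as a $\ZZ[\pi_1]$-module, and (i) should be read that way rather than as finite presentation as an abelian group. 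In (ii), note that $b'\mapsto\calH_{b',\eta}(X)$ is not a subanalytic family in the joint variable $(x,b')$ (the function $\|x\|^{b'}$ is not subanalytic in $b'$), so Hardt trivialisation cannot be applied directly to that parametrisation; the rationality and finiteness of the critical exponents must instead be extracted from a Puiseux-type analysis of the defining subanalytic data, which you already flag as the delicate point and which is how~\cite{MDH} handles it for homology.
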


\subsection{Metric Hurewicz isomorphism theorem}

After Theorem \ref{theo:maincomparacion} we can prove now Theorem ~\ref{theo:hur}:

\begin{proof}[Proof of Theorem~\ref{theo:hur}]
For the outer metric, given the previous theorem, the proof gets reduced to the routine checking that the comparision isomorphisms are compatible with the Hurewicz homomorphisms in each category. For the inner metric, by~\cite{BirbrairMost:2000}, there exists a different embedding of the germ such that $X$ is Lipschitz normally embedded for the new embedding, and the metric is Lipschitz equivalent with the original inner metric of $X$.
\end{proof}

\subsection{MD homotopy groups of $b$-cones}

We recall that given a bounded subanalytic set $L\subset \RR^k$ and $b\in\cap (0,+\infty )$ we call the {\em outer $b$-cone $(C_{L,out}^{b},d_{out})$ over} $L$ to the germ at the origin of the set
$$
C_{L,out}^{b}=\{(xt^b,t)\in \RR^k\times \RR;\, x\in L\mbox{ and }t\in [0,+\infty )\}
$$
with the outer metric $d_{out}$. Note that the link of $(C_{L,out}^b,d_{out})$ with the induced metric is bilispchitz to $(L,d_{out})$. When $b=1$, we have denoted $C_{L,out}^{1}$ simply by $C(L)$ and call it the \emph{straight cone over $L$}. 

If $b\in\QQ$ the outer $b$-cone is subanalytic, and if $b\notin\QQ$ it is at least definable in the $O$-minimal structure $\RR_{an}^{\RR}$. 

It is also convenient to have a definition of {\em inner metric $b$-cones}. Let $L\subset \RR^k$ be a subanalytic set. 
The inner metric $b$-cone of $L$ will be a straight cone together with a metric such that its link with the induced metric is bilipschitz equivalent to $(L,d_{inn})$. If $L$ is Lipschitz normally embedded (that is, the inner and outer metrics are bi-Lipschitz equivalent), then the inner $b$-cone is by definition $(C_{L,inn}^{b},d_{inn}):=(C_{L,out}^{b},d_{out})$. If $L$ is not Lipschitz normally embedded, by~\cite{BirbrairMost:2000} there exists a different subanalytic embedding $L\approx L'\hookrightarrow\RR^{k'}$ such that $L'$ is Lipschitz normally embedded, and the metric of $(L',d_{out})$ is Lipschitz equivalent with $(L,d_{inn})$, the inner metric that $L$ inherits from its original embedding in $\RR^k$. Then we define $(C_{L,inn}^{b},d_{inn}):=(C_{L',out}^b,d_{out})$. The following remark shows the independence on the embedding and gives an intrinsic description of the inner cone.



\begin{remark}
\label{rem:metricinnercone}
Let $g$ be the Riemannian metric in the smooth part of $L$ inducing the inner metric in $L$. Then, up to bi-Lipschitz equivalence, the Riemannian metric $(dt)^2+t^bg$ induces the metric in $C_L^{b,inn}$.   
\end{remark}

\begin{prop}\label{prop:b-conefundgroup}
Given a bounded connected subanalytic set $L\subset \RR^k$ we have for any $k\geq 1$
\begin{enumerate}[a)]
\item\label{item:b-cone big parameter} If $b' < b$, then $MD\pi_{k}^{b'}(C_{L, out}^{b}, \p)$ and $MD\pi_{k}^{b'}(C_{L, inn}^{b}, \p)$ are trivial.
\item\label{item:b-cone small parameter}  If $b'\geq b$ then we have isomorphisms
$$
MD\pi_{k}^{b'}(C_{L, out}^{b}, \p)\cong MD\pi_{k}^{b'}(C_{L,inn}^{b}, \p)\cong  \pi_{k}(L,\p_0)
$$
\end{enumerate}
\end{prop}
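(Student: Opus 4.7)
The plan is to reduce each statement to a standard topology computation via the comparison theorems of Section~\ref{sec:Compar}. For $b'<\infty$, Theorem~\ref{theo:maincomparacion} provides an isomorphism
\[
MD\pi^{b'}_k(C_{L,out}^{b},\p) \;\cong\; \pi_k\bigl(\calH_{b'',\eta}(C_{L,out}^{b})\cap B_{\epsilon}\setminus\{0\},\,\p(t_0)\bigr)
\]
for any $b''>b'$ close enough to $b'$ and any $\epsilon$ sufficiently small.  For $b'=\infty$, Proposition~\ref{prop:MD homotopy at infinity} identifies $MD\pi^\infty_k$ with $\pi_k$ of the link of the germ, and the link of $C_{L,out}^{b}$ at radius $\epsilon$ is homeomorphic to $L$ via $(xt^b,t)\mapsto x$, settling the $b'=\infty$ subcase of (b).  For the inner cone, $C_{L,inn}^{b}$ is by definition $(C_{L',out}^{b},d_{out})$ for a Lipschitz normally embedded $L'$ homeomorphic to $L$, so the inner statements follow from the outer ones applied to $L'$ together with $\pi_k(L)=\pi_k(L')$.

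For case (a), I would choose $b''$ with $b'<b''<b$.  At scale $t$ the horn has thickness $\sim \eta t^{b''}$, while the cone cross-section has diameter $\sim t^{b}$; since $t^{b}/t^{b''}\to 0$ as $t\to 0$, the horn is a \emph{fat} neighborhood of the ray $\mathrm{Im}(\p)$.  I would construct a deformation retraction onto $\mathrm{Im}(\p)$ by the straight-line homotopy sending a point $q$ to $\p(\tau(q))$, where $\tau(q)$ is a subanalytic choice of $t$-coordinate of a nearest point of $C_{L,out}^{b}$ to $q$; the dominance of the horn thickness over the cross-section ensures the homotopy stays inside the horn (possibly after slightly enlarging $\eta$, which preserves the homotopy type).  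The retract $\mathrm{Im}(\p)$ is contractible, hence all groups in (a) vanish.

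For case (b) with $b'<\infty$, I would pick $b''>b'\geq b$.  Now $t^{b''}/t^{b}\to 0$, so the horn is a thin tubular neighborhood of $C_{L,out}^{b}\setminus\{0\}$ in $\RR^{k+1}$.  Using the subanalytic tubular neighborhood theorem combined with the self-similarity $(x,t)\mapsto(\lambda^{b} x,\lambda t)$ of the $b$-cone, which propagates a single tubular retraction to all scales, I would deformation-retract the horn onto $C_{L,out}^{b}\setminus\{0\}$.  The latter retracts by its conical structure onto its link, which is homeomorphic to $L$, yielding $MD\pi^{b'}_k\cong \pi_k(L,\p_0)$, where $\p_0\in L$ is the direction determined by $\p$.

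The principal technical obstacle will be obtaining uniform-in-$t$ control of both retractions near the vertex, so that they produce continuous maps on germs and remain inside the relevant horn.  This uniformity should follow from the subanalyticity of all sets involved combined with the scaling self-similarity of the $b$-cone; similar uniformity arguments underlie the corresponding horn-neighborhood computations for MD homology established in \cite{MDH}.
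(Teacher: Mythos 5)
Your proposal is correct and takes essentially the same approach as the paper: the paper's proof of this proposition reduces the inner case to the outer via the definition of $C_{L,inn}^{b}$ and then cites Theorem~\ref{theo:maincomparacion} (together with Proposition~\ref{prop:MD homotopy at infinity} for $b'=\infty$), exactly as you do. You additionally spell out the fat-horn/thin-horn retraction dichotomy that makes the cited comparison isomorphism yield triviality when $b'<b$ and $\pi_k(L)$ when $b'\geq b$, which the paper leaves to the reader.
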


\proof
By definition of the inner cone it is enough to work for the outer cone. In that case the result is a corollary of Theorem~\ref{theo:maincomparacion}.
\endproof

\section{The MD-fundamental group and MD-Homology of normal surface singularities with the inner metric}\label{sec:surfaces}

Let $(X,0,d_{inn})$ be a complex normal surface singularity with its inner metric. Its Lipschitz geometry is completely described in~\cite{BirbrairNeumannPichon:2014}. Before we summarize it we recall the definition of some special metric subanalytic germs for convenience of the reader. They can be read in  Section 11 in \cite{BirbrairNeumannPichon:2014}.  

\begin{definition}[\cite{BirbrairNeumannPichon:2014}]
Choose $1\leq q<q'$. Let $D_2\supset D_{1}$ be the concentric discs centered at the origin of $\RR^2$ of radii $1$ and $2$ and denote by $\dot{D}_1$ the open disc. Consider the subset of $\RR^3$ given by $C^q(D_2)\setminus C^{q'}(\dot{D}_1)$  with the inner metric and denote by $g$ its riemanninan tensor. 
 The germ ${A}^*(q,q')$ is equal to $\SSS^1\times  ((C^q(D_2)\setminus C^{q'}(\dot{D}_1))\setminus \{O\})$ with the riemannian metric $g+t^2d\theta^2$ where $\theta$ parametrizes the factor $\SSS^1$  and $t$ is a l.v.a. parameter of a conical structure on $(C^q(D_2)\setminus C^{q'}(D_1),O)$. 
 We denote by ${A}(q,q')$ the completion of ${A}^*(q,q')$, which adds a point that we call the vertex; ${A}(q,q')$ is homeomorphic to the cone over the annulus $D_2\setminus \dot{D}_1$. The metric extends to the vertex. 
\end{definition}

\begin{definition}
Let $q\in \RR^+$ and let  $\phi:F\to F$ be an orientation-preserving subanalytic diffeomorphism of a compact oriented surface $F$ in $\RR^n$. Consider the inner $q$-cone $C_{F,inn}^q(F)$ of $F$ and let $g$ be its riemannian tensor.  Consider the extension $\overline{\phi}:C_{F,inn}^q(F)\to C_{F,inn}^{q}(F)$
given by $\overline{\phi}(xt^q,t)=(\phi(x)t^q,t)$. We denote by ${B}^*(F,\phi,q)$ the mapping torus of $\overline{\phi}|_{C_{F,inn}^{q}(F)\setminus\{O\}}$ with the metric given by 
$g+t^2d\theta^2$ where $\theta$ parametrizes the factor $\SSS^1$  and $t$ is a l.v.a.  parameter of a conical structure on $C_{F,inn}^q(F)$. 
 We denote by ${B}(F,\phi,q)$ the completion of ${B}^*(F,\phi,q)$, which adds a point that we call the vertex; ${B}(F,\phi,q)$ is homeomorphic to the cone over the mapping torus of $\phi$. The metric extends to the vertex. 
Moreover there is a locally trivial fibration to the puncture disc given by the coordinates $(t,\theta)$.
\end{definition}

Due to \cite{BirbrairNeumannPichon:2014} the inner Lipschitz geometry of $(X,0,d_{inn})$ is described in the following terms: there is a finite number of rational numbers $1=q_1<q_2<...<q_n$ and a canonical subanalytic decomposition in closed subspaces
\begin{equation}
\label{eq:ratedecomp}
X=\bigcup_{k=1}^n Y_k\cup\bigcup_{1\leq i<j\leq n}Z_{i,j}
\end{equation}
that endowed with the inner metric have the following properties
\begin{enumerate}
 \item $Y_1$ is metrically conical, that is bi-Lipschitz subanalytic homeomorphic to the $1$-cone over its link with the inner metric.
 \item There is a subanalytic map $\xi:\overline{X\setminus Y_1}\to D_\epsilon^*$ which is a locally trivial fibration, where $D^*_\epsilon$ denotes the punctured disc of a certain radius $\epsilon$. The map restricts to a locally trivial fibration over each piece of the decomposition of $X\setminus Y_1$.
 \item For each $i>1$ denote by $\phi_i:F_i\to F_i$ the monodromy of $\xi:(Y_i)_\epsilon|_{\partial D^*_\epsilon}\to \partial D^*_\epsilon$. Then $Y_i$ is bilipschitz subanalytic diffeomorphic to $B(F_i,\phi_i,q_i)$ in a such a way that $\xi$ is compatible way with $\xi$ and the natural projection of $B(F,\phi_i,q_i)$   to its base puctured disc given by the coordinates $(t,\theta)$.
\item Every $Z_{i,j}$ is bilipschitz subanalytic diffeomorphic to a (possibly empty) disjoint union of pieces $A(q_i,q_j)$.
\item For a sufficiently small $\epsilon>0$ the decomposition 
\begin{equation}
\label{eq:linkdecomp}
 X_\epsilon=\bigcup_{k=1}^n (Y_k)_\epsilon\cup\bigcup_{1\leq i<j\leq n}(Z_{i,j})_\epsilon
\end{equation}
is a (non-minimal) JSJ-decomposition of the link $X_\epsilon$ (here $Z_\epsilon$ denotes $Z\cap\SSS_\epsilon$), and the decomposition~(\ref{eq:ratedecomp})
is the cone over this decomposition by a subanalytic conical structure. In~\cite{BirbrairNeumannPichon:2014} it is proved that this JSJ decomposition together with the rates $q_i$ is canonically determined and determines the inner Lipschitz geometry of $(X,0)$. This decomposition does not coincide in general with the minimal JSJ-decomposition.
\item Any continuous path joining two points in adjacent pieces in the decomposition (\ref{eq:linkdecomp}) passes through the common boundary.  
\end{enumerate}

\begin{remark}
Although in~\cite{BirbrairNeumannPichon:2014} the authors do not mention the subanalyticity of the decompostition, it holds by their construction: they start with a subanalytic decomposition of $(X,0)$ built from a carrousel associated with the discriminant of a generic projection (see Section~12 of~\cite{BirbrairNeumannPichon:2014}), and after glueing finitely many pieces together in order to reach the canonical decomposition. Each of the pieces of the initial decomposition are subanalytic bilipschitz homeomorphic to the models described above. The same statement for the canonical decomposition holds since the gluing rules of Lemma 13.1~in~\cite{BirbrairNeumannPichon:2014} preserve subanalyticity because they consists in gluing a finite number of subanalytic sets. 

Notice also that the monodromy of the subanalytic fibration $\xi$ and its restriction to each of the pieces has a subanalytic representative as an application of Hardt's trivialization theorem.

For the MD fundamental group computation below one can  work instead with the initial subanalytic decomposition of $(X,0)$ produced in Section~12 of~\cite{BirbrairNeumannPichon:2014} without any further gluing, obtaining the same result.
\end{remark}

Observe that $Y_k$ meets $Z_{ij}$ if and only if $k=i$ or $k=j$. We start also recalling that for every $k\in \{1,...,n\}$ we have that the boundary $(\partial Y_k)_\epsilon$ is a disjoint union of tori. Each tori is fibred over $\SSS^1$ by $\xi$, with fibre a disjoint union of circles. The boundary $\partial Y_k$ is subanalytically homeomorphic to the cone over $(\partial Y_k)_\epsilon$. With respect to the induced inner metric in $\partial Y_k$, the base $\SSS^1$ of the fibration collapses its metric at rate $1$ and the fibres at rate $q_k$. Choose a collar $(C_k)_\epsilon$ of $(\partial Y_k)_\epsilon$ in $(Y_k)_\epsilon$. This collar is the link of a piece $C_k\subset Y_k$ with the property that $C_k$ is a disjoint union of anular pieces of type $A(q_k,q_k)$. It will be important later that both rates of the annular piece are equal. The piece $C_k$ decomposes as a disjoint union as
$$C_k=C_k^-\coprod C_k^+,$$
where $C_k^-$ contains the boundary components of $Y_k$ that intersect pieces of the decomposition of $X$ collapsing slower (that is $\partial Y_k\cap Z_{k-1,k}$), and $C_k^+$ contains the boundary components of $Y_k$ that intersect pieces collapsing faster (that is $\partial Y_k\cap Z_{k,k+1}$). Notice that $C_1^-$ and $C_n^+$ are empty. We choose the $C_k$ so that $\xi|_{C_k}$ is a locally trivial fibration. 

Define 
$$\widetilde{Z}_{i,j}:=Z_{i,j}\cup C_j^-\cup C_i^+$$
The piece $\widetilde{Z}_{i,j}$ is the union of $Z_{i,j}$ with all the constant collapsing rate annular pieces defined above that intersect it.

For every $b\in [1,+\infty)$ we define

$$U_{>b}:=\bigcup_{q_k>b} Y_k\cup \bigcup_{q_i>b}\widetilde{Z}_{i,j}.$$

$$\widetilde{U}_{>b}:=\bigcup_{q_k>b} Y_k\cup \bigcup_{q_j>b}\widetilde{Z}_{i,j},$$


For any $b\in [1,+\infty)$ we define the cover $\calU^b$ of $X$ that we will use to apply our version of the Seifert van Kampen Theorem for the $MD \pi^b_1$ of Theorem \ref{theo:SvKGroupoids}:
\begin{equation}
\label{eq:calUb}
\calU_b:=\{Y_k: q_k\leq b\}\cup\{\widetilde{Z}_{i,j}: q_i<q_j\leq b\} \cup \{\widetilde{U}_{>b}.\}
\end{equation}  
Note that the intersection of any 3 sets in $\calU_b$ is empty. Whenever the intersection of two is non-empty it is a union of $C_k^{-}$ and $C_k^{+}$ pieces.

We fix a l.v.a. subanalytic conical structure 
\begin{equation}\label{eq:fixh}h:C(X_\epsilon)\to (X,0)
\end{equation}
compatible with the decomposition in (\ref{eq:ratedecomp}) and with the $C_k^{-}$ and $C_k^{+}$ pieces. 
If $Q$ is a set of points in $X_\epsilon$ we consider the points given by retraction lines from points in $Q$ and denote  \begin{equation}\label{eq:notfh}\fh(Q):=\{h|_{C(\{z\})}:z\in Q\}
\end{equation}
where $C(\{z\})$ denotes the subcone of $C(X_\epsilon)$.

For any $b\in [1,+\infty)$ we choose a collection $P^b$ of points  that meets all the connected components of the interior of $(\widetilde{U}_{>b})_\epsilon$, of $(Y_k)_\epsilon$,  $(C_k^-)_\epsilon$ and $(C_k^+)_\epsilon$ for every $k$ with $q_k\leq b$  and  of $(Z_{i,j})_\epsilon$ with $q_i<q_j\leq b$. The conical structure induces a collection l.v.a points $\fP^b:=\fh(P^b)$ in $X$.


\begin{prop}
\label{prop:check***}
For any $b\in [1,+\infty)$ the pair $(\calU^b, \fP^b)$ satisfies conditions $(*)_b$ and $(**)_b$ in Theorem  \ref{theo:SvKGroupoids} considering in every set of $\calU^b$ the inner metric $d_i$ induced by inner metric in $(X,0)$, that is as in case (b) at the beginning of Section \ref{subsec:svk***}. Consequently, the groupoid $MD\pi_1^{b}(X,0,d_{inn},\fP^b)$ is the colimit of the system of groupoids associated to the cover $\calU^b$.
\end{prop}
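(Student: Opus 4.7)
The proof exploits the rate decomposition~(\ref{eq:ratedecomp}) of $(X, 0, d_{inn})$ together with the special design of $\calU_b$. By construction, pairwise intersections of distinct elements of $\calU_b$ are (possibly empty) unions of constant-rate annular collar pieces $C_k^{\pm}$ of type $A(q_k, q_k)$, and no three distinct elements share a point. The base points $\fP^b$ meet every connected component of every element of the cover and every collar, enabling path construction via the conical structure $h$ fixed in~(\ref{eq:fixh}).

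The heart of the argument is the following geometric lemma that I would establish first: if $\q$ and $\q'$ are $b$-equivalent $b$-points of $(X, d_{inn})$ lying in distinct elements $U_i, U_j$ of $\calU_b$, then there exists a $b$-point in $U_i \cap U_j$ that is $b$-equivalent to both. The proof uses that traveling between distinct pieces of the rate decomposition in the inner metric requires crossing a width bounded below by $C t^{q_k}$ for some collar rate $q_k\leq b$, so $b$-equivalence (closeness at rate $o(t^b)$) forces both points to approach the common collar at rate $o(t^b)$. Within the constant-rate collar $A(q_k, q_k)$ the restriction of the global inner metric of $X$ is bi-Lipschitz equivalent to each of the two adjacent pieces' inner metrics (using Remark~\ref{rem:metricinnercone}), so the common representative is legitimate from both perspectives.

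Given the lemma, condition $(**)_b$ follows by applying it pointwise along the $b$-paths and using the product structure $A(q_k, q_k) \cong \SSS^1 \times C^{q_k}(\text{annulus})$ of the common collar to retract both $\gamma$ and $\gamma'$ onto a single $b$-path $\delta$ inside $U_i \cap U_j$ while preserving their $b$-equivalence classes in $(U_i,d_i)$ and $(U_j,d_j)$. Condition $(*)_b$ is verified by using the lemma to identify a common collar whose connected component meets the $b$-equivalence classes of all three points $\q_i, \q_j, \q_k$ (the rate analysis shows that in a triple lying in three pairwise-distinct cover elements, at most one can be a finite-rate piece $Y_k$ or $\widetilde Z_{i,j}$ accompanying $\widetilde U_{>b}$), choosing $\p \in \fP^b$ in the matching component, and building each $\delta_v$ as the concatenation of a retraction line via $h$ with a $b$-path in $U_v$ connecting to $\p$, which exists by the $b$-connectedness of each piece. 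The homotopies $\mu_{vw}$ are then chosen by straight-line interpolation in the fibration coordinates of the collar; the compatibility condition $\mu_{vw}|_{C(I\times\{1\})}\sim_b \mu_{wv}|_{C(I\times\{1\})}$ in $(X,d)$ is automatic since both endpoints are $b$-equivalent to $\p$.

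The main obstacle is the geometric lemma, which requires explicit inner-distance estimates in the models $B(F_i, \phi_i, q_i)$ and $A(q_i, q_j)$ of~\cite{BirbrairNeumannPichon:2014}, together with careful bookkeeping around the piece $\widetilde{U}_{>b}$, whose intrinsic inner metric already collapses faster than rate $b$ so that any two of its points are internally $b$-equivalent; this asymmetry must be tracked in the case analysis of admissible adjacency patterns for $b$-equivalent triples.
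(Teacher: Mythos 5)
Your geometric instinct is the right one: the heart of the matter is that in the inner metric of $(X,0)$ the pieces of the rate decomposition are separated by the annular collars $C_k^{\pm}$, whose width at scale $t$ is of order $t^{q_k}$ with $q_k\leq b$, so $b$-equivalence of points lying in two adjacent cover elements forces them to concentrate near the common collar. This is exactly the distance estimate the paper uses. However your proposal diverges from the paper in two places, and in both the paper's route is cleaner and yours has a gap that would need closing.

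For $(**)_b$, after establishing the distance bound, the paper does not ``retract'' $\gamma$ and $\gamma'$ onto the collar; it observes (using property (6) of the decomposition) that for each parameter $s$ and $t$ small, at least \emph{one} of $\gamma(s,t)\in U_i$, $\gamma'(s,t)\in U_j$ already lies in $C=U_i\cap U_j$, because otherwise they would sit on opposite sides of the collar and their inner distance would be $\gtrsim t^{q_k}$. One then partitions $I=I_\gamma\cup I_{\gamma'}$ subanalytically and defines $\delta$ by selecting whichever of the two paths is in $C$. Your retraction formulation is vaguer: $\gamma$ is defined on all of $U_i$, not just near $C$, so there is no global retraction to invoke; you would first have to prove the selection dichotomy anyway, at which point retraction buys you nothing, and the claimed bi-Lipschitz compatibility of the various inner metrics on the collar is stated but not justified. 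For $(*)_b$, the paper simply remarks that since $\calU^b$ has only $2$-fold intersections, $(*)_b$ follows from $(**)_b$ (as in Corollary~\ref{cor:svk2abiertos}): two of the three indices must coincide or the pair-wise common representatives supplied by $(**)_b$ live in a single collar, and constant homotopies suffice. Your direct case analysis of ``admissible adjacency patterns'' (and the assertion that ``at most one can be a finite-rate piece accompanying $\widetilde U_{>b}$'') is not substantiated and re-derives, more painfully and less convincingly, what the reduction to $(**)_b$ gives for free. So: right geometric lemma, but you should drop the retraction framing in favor of the selection/gluing construction, and replace the ad hoc $(*)_b$ argument by the observation that it is a formal consequence of $(**)_b$ in the absence of triple intersections.
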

\begin{proof}
Condition $(*)_b$ follows from $(**)_b$ since the cover only has $2$-fold intersections. We check condition $(**)_b$. Let $C$ be the intersection of two sets $B$ and $B'$ of the decomposition. Then $C$ is either a $C_k^-$ or a $C_k^+$ piece, which we call $C$. The collapsing rate of this annular piece is $q_k<b$. This implies that 
$$d_{inn}((B\setminus C)_\epsilon,(B'\setminus C)_\epsilon)=a\epsilon^{q_k}+\theta(\epsilon)$$
for $a>0$ and $\theta(\epsilon)$ decreasing faster than $\epsilon^{q_k}$. Moreover,  $d_{inn}((B\setminus C)_\epsilon,(X\setminus C)_\epsilon)$ and $d_{inn}((B'\setminus C)_\epsilon,(X\setminus C')_\epsilon)$ are of the same form. 

Let $\gamma:C(I)\to B$ and $\gamma':C(I)\to B'$ be weak $b$-paths which are $b$-equivalent in $X$. Then for any fixed $s\in I$ we have that $d_{inn}(\gamma(s,t),\gamma'(s,t))$ decreases faster than $t^b$ where $b>q_k$. This, together with property $(6)$ of the decomposition and the previous bound implies that either $\gamma(s,t)$ or $\gamma'(s,t)$ belongs to $C$ for $t$ small enough. Therefore we split $I$ as a union $I=I_\gamma\cup I_{\gamma'}$ such that $I_\gamma$ and $I_{\gamma'}$ are disjoint unions of closed intervals and $I_\gamma\cap I_{\gamma'}$ is finite, $\gamma|_{C(I_\gamma)}$ has image in $C$ and $\gamma'|_{C(I_{\gamma'})}$ has image in $C$. The path $\delta:C(I)\to C$ defined by $\delta|_{C(I_\gamma)}:=\gamma|_{C(I_\gamma)}$ and $\delta|_{C(I_{\gamma'})}:=\gamma'|_{C(I_{\gamma'})}$ is $b$-equivalent both to $\gamma$ and $\gamma'$. 
\end{proof}

We study in the following lemmata the $MD\pi^b_1$ of each element of the cover $\calU^b$  and their intersections considering always the inner metric on them induced by inner metric in $(X,0)$, that is as in case (b) at the beginning of Section \ref{subsec:svk***}.

\begin{lema}
\label{lem:Ypiece}
There is an isomorphism of fundamental groupoids
$$MD\pi^b_1(Y_k,\fh(Q))\cong \pi_1((Y_k)_\epsilon,Q) \ \ \text{for \ }b\geq q_k,$$
where $Q$ is any set of points in $(Y_k)_\epsilon$ that meets all the connected components. 
\end{lema}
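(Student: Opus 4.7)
The conical structure $h: C(X_\epsilon)\to(X,0)$ fixed in (\ref{eq:fixh}) restricts to $h_k := h|_{C((Y_k)_\epsilon)} : C((Y_k)_\epsilon)\to Y_k$. Any continuous loop $\gamma:(I,\partial I)\to((Y_k)_\epsilon,q)$ based at some $q\in Q$ yields a continuous l.v.a. subanalytic map $\tilde\gamma:C(I)\to Y_k$ defined by $\tilde\gamma(y,t):=h_k(\gamma(y),t)$, which is a weak $b$-map based at $\fh(q)$. Because a topological homotopy $H:I\times I\to(Y_k)_\epsilon$ extends analogously to a continuous weak $b$-homotopy $\tilde H:C(I\times I)\to Y_k$, this construction descends to a functor of groupoids
$$\Phi: \pi_1((Y_k)_\epsilon,Q)\to MD\pi_1^b(Y_k,\fh(Q)),$$
which is the identity on the set of objects identified by $\fh$.

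To prove $\Phi$ is fully faithful we exploit that both sides respect the decomposition into connected components: the components of $(Y_k)_\epsilon$ coincide with those of $Y_k$ by conicality of $h$, and each component of $Y_k$ is $b$-connected. Using independence of base point (Proposition~\ref{prop:indbasepoint}) and the standard reduction from groupoid isomorphisms to fundamental group isomorphisms, the claim reduces to showing that for each connected component $Y_k^{(j)}$ of $Y_k$ and each $q\in Q\cap (Y_k^{(j)})_\epsilon$ the induced map
$$\Phi_j : \pi_1((Y_k^{(j)})_\epsilon,q)\to MD\pi_1^b(Y_k^{(j)},\fh(q))$$
is a group isomorphism.

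To establish this absolute isomorphism we pass to the outer metric via the Birbrair-Mostowski re-embedding of~\cite{BirbrairMost:2000}: there is a subanalytic re-embedding $Y_k^{(j)}\hookrightarrow\RR^N$ onto a Lipschitz normally embedded germ $(Y_k^{(j)})'$ whose outer metric is bi-Lipschitz equivalent to the inner metric of $Y_k^{(j)}$. Theorem~\ref{theo:maincomparacion} then yields
$$MD\pi^b_1(Y_k^{(j)},\fh(q))\cong \pi_1\bigl(\calH_{b',\eta}((Y_k^{(j)})')\cap B_{\epsilon_{b'}}\setminus\{0\},\, \fh(q)(t_0)\bigr)$$
for $b'$ slightly greater than $b$ and $\epsilon_{b'}, t_0$ small enough. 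Appealing to the metric model $B(F,\phi,q_k)$ for $Y_k^{(j)}$ (with $F$ the connected fibre of the monodromy orbit giving this component), one verifies that for $b'>b\geq q_k$ the horn neighborhood deformation retracts to $(Y_k^{(j)})'$ through normal projection, available because $(Y_k^{(j)})'$ is LNE and the horn width $\eta t^{b'}$ is dominated by the $q_k$-rate collapse of the $F$-fibres. Since $(Y_k^{(j)})'\setminus\{0\}$ conically deformation retracts to the link $((Y_k^{(j)})')_\epsilon\cong (Y_k^{(j)})_\epsilon$, and both retractions respect the chosen base point and intertwine with $h_k$, the composite isomorphism is identified with $\Phi_j$.

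The principal obstacle lies in this horn deformation retraction. LNE alone provides a local tubular neighborhood of $(Y_k^{(j)})'$, but one must verify that the horn of rate $b'$ fits uniformly inside such a tube as we approach the vertex, using the explicit structure of the model $B(F,\phi,q_k)$: the embedded piece has transverse collapse rate $q_k$ in the $F$-direction and rate $1$ in the $\theta$-direction, so a horn of the strictly larger rate $b'>q_k$ remains thinner than the $F$-fibre thickness and does not cross-identify different $\theta$-slices. The threshold $b'>q_k$ is sharp: for $b'\leq q_k$ the horn would engulf the $F$-fibres and the contribution of $\pi_1(F)$ would collapse, paralleling the behaviour of pure $b$-cones in Proposition~\ref{prop:b-conefundgroup}.
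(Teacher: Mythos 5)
Your proof takes a genuinely different route from the paper. For $k>1$ the paper pulls back to $Y_k$ the slit--disc decomposition $D^*_\epsilon = D^*_\epsilon\setminus\{(t,0):t>0\}\cup D^*_\epsilon\setminus\{(t,0):t<0\}$ under the fibration $\xi$, argues that each resulting piece (and each component of their intersection) is metrically homotopy equivalent to the inner $q_k$-cone over the fibre $F_i$, invokes Proposition~\ref{prop:b-conefundgroup} to identify the $b$-MD fundamental group of each such piece with $\pi_1(F_i)$ for $b\geq q_k$, and then runs Theorem~\ref{theo:SvKGroupoids} in parallel with the topological Seifert--van Kampen theorem for $(Y_k)_\epsilon$. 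You instead apply the comparison theorem Theorem~\ref{theo:maincomparacion} directly to the whole piece $Y_k^{(j)}$ after a Birbrair--Mostowski LNE re-embedding. Both routes ultimately feed into Theorem~\ref{theo:maincomparacion}, but the paper only asks it of $b$-cones (through Proposition~\ref{prop:b-conefundgroup}), whereas you ask it of the full mapping-torus geometry $B(F,\phi,q_k)$; the work you save on Seifert--van Kampen bookkeeping reappears concentrated in the horn-retraction step.

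That step is where your proposal has a genuine gap. You assert that the $b'$-horn neighbourhood of $(Y_k^{(j)})'$ deformation retracts onto $(Y_k^{(j)})'\setminus\{0\}$ ``through normal projection, available because $(Y_k^{(j)})'$ is LNE.'' But LNE is a purely metric condition and does not supply a tubular neighbourhood or a well-defined normal projection on a singular subanalytic germ. Moreover your rate comparison (horn width $\eta t^{b'}$ versus $F$-fibre scale $t^{q_k}$) is read off the model $B(F,\phi,q_k)$ in the \emph{original} embedding, whereas the horn $\calH_{b',\eta}((Y_k^{(j)})')$ is an ambient object of the new $\RR^N$; the bi-Lipschitz equivalence controls intrinsic distances on the germ, not the ambient thickening. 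You correctly label the retraction as the ``principal obstacle,'' but what you give is a heuristic, not an argument, and nothing in Theorem~\ref{theo:maincomparacion} or the LNE property alone certifies it. To close the gap you would need a separate lemma to the effect that for an LNE conical subanalytic germ all of whose directions collapse at rate at most $q$, the horn of rate $b'>q$ is homotopy equivalent near the vertex to the germ minus the vertex, and then verify the hypothesis for $B(F,\phi,q_k)$. That is essentially the analytic content that the paper sidesteps by reducing to explicit $q_k$-cones.

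The remaining scaffolding of your proposal is sound: the cone functor $\Phi$ induced by the conical structure $h$, the reduction to connected components, and the use of Proposition~\ref{prop:indbasepoint} to pass between groupoids and groups are all legitimate and are implicit in the paper's argument as well.
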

\proof
For $k=1$ the result follows by  Proposition~\ref{prop:b-conefundgroup} because $Y_1$ is metrically conical. For $k>1$ we consider the fibration 
$$\xi|_{Y_k}:Y_k\to D^*_\epsilon.$$
Restricting over the boundary $\partial D^*_\epsilon$ we have a fibration 
$$\xi|_{(Y_k)_\epsilon}:(Y_k)_\epsilon\to \partial D^*_\epsilon$$
whose fibre is a surface $F_i$. The surface $F_i$ decomposes in connected components as
$$F_i=\coprod_{j=1}^N\coprod_{k=1}^{M_j} F_{i,j,k},$$
where the components $F_{i,j,1},...,F_{i,j,M_j}$ are interchanged cyclically by the monodromy. Two components $F_{i,j,k}$ and $F_{i,j',k'}$ are in the same connected component of $Y_k$ if and only if $j=j'$. In what follows we complete the proof for the special case $N=1$ and $M_1=1$. The general case is exactly the same with some more notational complication, and this special case the Lipschitz geometry appears in a more transparent way.

Consider the decomposition
\begin{equation}
\label{eq:discdecomp}
D^*_\epsilon=D^*_\epsilon\setminus\{(t,0),t>0\}\cup D^*_\epsilon\setminus\{(t,0),t<0\} 
\end{equation}
and pullback this decomposition of $D^*_\epsilon$ by $\xi$ to a decomposition of $Y_k$. Each of the two pieces of the decomposition is connected and admits the $b$-cone over $F_i$ as a metric deformation retract: this means that the $b$-cone of $F'_i$ is included in each of this pieces and that the inclusion is a metric homotopy equivalence in the sense of Definition~\ref{def:1homotopy}. The intersection of the two pieces splits as the disjoint union of two connected components, which are the preimages by $\xi|_{Y_k}$ of the two connected components of $D^*_\epsilon\setminus\{(t,0),t\in\RR\}$. Each of these connected components admits the $b$-cone over $F_i$ as a metric deformation retract. 

Therefore, by Proposition~\ref{prop:homotopyinvariance} and Proposition~\ref{prop:b-conefundgroup} the $b-MD$ of each of the two pieces and of the connected components of the intersection between them  of the decomposition is equal to the topological fundamental group of $F_i$. 

We check hypothesis $(*)_b$ and $(**)_b$ in Theorem~\ref{theo:SvKGroupoids} as in the previous Proposition. Then, applying Theorem~\ref{theo:SvKGroupoids} to the decomposition and  the Seifert van-Kampen Theorem for groupois to the corresponding decomposition of $(Y_k)_\epsilon$, one observe that both colimit computations are the same and the result is proven.
\endproof

\begin{lema}
\label{lem:Zpiece}
There is an isomorphism of fundamental groupoids
$$MD\pi^b_1(\widetilde{Z}_{i,j},\fh(Q))\cong \pi_1((\widetilde{Z}_{i,j})_\epsilon,Q) \ \ \text{for \ }b\geq q_j>q_i, $$
where $Q$ is any set of points in $(\widetilde{Z}_{i,j})_\epsilon$ that meets all the connected components. 
\end{lema}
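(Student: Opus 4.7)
The plan is to mimic the proof of Lemma~\ref{lem:Ypiece} for the fibration $\xi$ restricted to $\widetilde{Z}_{i,j}$. First I would reduce to a single connected component, which by the structure theorem of~\cite{BirbrairNeumannPichon:2014} is subanalytically bilipschitz equivalent to one $A(q_i,q_j)$ piece with the $A(q_j,q_j)$ and $A(q_i,q_i)$ collars glued at its two boundary tori. The restriction of $\xi$ to this component is a locally trivial subanalytic fibration over $D^*_\epsilon$ whose fiber $F$ is a topological annulus $\SSS^1\times I$, realised at level $t$ as the concentric annulus of outer radius $t^{q_i}$ and inner radius $t^{q_j}$.

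Second, I would pull back the decomposition~(\ref{eq:discdecomp}) of $D^*_\epsilon$ through $\xi$ to obtain an open cover $\{W^+,W^-\}$ of this component, whose intersection splits as a disjoint union of two pieces corresponding to the two components of $D^*_\epsilon\setminus\{(t,0):t\in\RR\}$. Each of $W^\pm$ and each component of $W^+\cap W^-$ should admit a metric deformation retract in the sense of Definition~\ref{def:1homotopy} onto the inner $q_j$-cone $C_{\SSS^1}^{q_j,inn}$ over the circle, built in two stages: first contract the contractible half-disc base to a chosen angle, which is a uniformly Lipschitz l.v.a. motion because the $\theta$-fibre has length of order $t$; then radially collapse the resulting mixed annular slice onto its inner boundary circle, a motion of displacement of order at most $t^{q_i}$. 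Since both stages are uniformly Lipschitz l.v.a., Proposition~\ref{prop:homotopyinvariance} yields an isomorphism of $b$-MD fundamental groupoids, and Proposition~\ref{prop:b-conefundgroup} identifies each such groupoid with $\pi_1(\SSS^1)=\ZZ$, which coincides with the $\pi_1$ of the corresponding piece of the link $(\widetilde{Z}_{i,j})_\epsilon$.

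Third, I would verify conditions $(*)_b$ and $(**)_b$ for the cover $\{W^+,W^-\}$ relative to the base-point set $\fh(Q)$, following Proposition~\ref{prop:check***} verbatim: the separation in the inner metric between the parts of $W^+$ and $W^-$ not in the intersection decays like $\epsilon^1$, and since $b\geq q_j>1$ any two $b$-equivalent paths sitting in the two pieces must pass through the intersection for small $t$, yielding the required path-splitting. Applying Theorem~\ref{theo:SvKGroupoids} then expresses $MD\pi_1^b(\widetilde{Z}_{i,j},\fh(Q))$ as the colimit of the system of $b$-MD fundamental groupoids of the cover, and the classical Seifert-van Kampen theorem applied to the parallel cover of the link produces the same colimit for $\pi_1((\widetilde{Z}_{i,j})_\epsilon,Q)$, finishing the proof.

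The main obstacle will be the explicit construction of the metric deformation retract onto $C_{\SSS^1}^{q_j,inn}$ so that each intermediate map is uniformly Lipschitz l.v.a. with respect to the inner metric inherited from $X$, despite the mixed-rate structure of $A(q_i,q_j)$ combined with the collars of rates $q_j$ and $q_i$. The key leverage is that $b\geq q_j>q_i\geq 1$ makes every rate appearing in $\widetilde{Z}_{i,j}$ at most $b$, so the retraction stays compatible with $b$-moderate discontinuity; carrying this out rigorously reduces to an elementary but careful computation with the Riemannian model $(dt)^2+t^{q}g$ of Remark~\ref{rem:metricinnercone}.
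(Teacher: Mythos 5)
Your proof is correct and follows the same Seifert--van Kampen strategy as the paper, but with one structural difference worth recording. The paper first claims that the inclusion of a boundary-torus cone into $\widetilde{Z}_{i,j}$ is a metric homotopy equivalence, thereby reducing the lemma to the isomorphism $MD\pi_1^b(T_i)\cong\pi_1((T_i)_\epsilon)$ for a single torus cone, which is then handled ``as in Lemma~\ref{lem:Ypiece}''. You instead skip the preliminary reduction and apply the disc decomposition~(\ref{eq:discdecomp}) together with Theorem~\ref{theo:SvKGroupoids} directly to $\widetilde{Z}_{i,j}$, retracting each piece $W^\pm$ of the pulled-back cover onto a $q_j$-cone over a circle and then invoking Proposition~\ref{prop:b-conefundgroup}. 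Both routes lead to the same colimit; yours is slightly more self-contained since it does not separately establish the torus case.

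There is in fact a point in your favor. You retract onto the \emph{inner} boundary circle (collapsing at rate $q_j$), and this radial collapse is fiberwise non-expanding, hence genuinely uniformly Lipschitz l.v.a.\ in the sense of Definition~\ref{def:1homotopy}. The paper states that the inclusion of $T_i$, the cone over the tori of $q_i$ type (the slower-collapsing, outer boundary), is a metric homotopy equivalence. But any retraction onto $T_i$ must carry the inner fiber circle of length $\sim t^{q_j}$ degree-one onto the outer fiber circle of length $\sim t^{q_i}$, which forces an angular stretch of order $t^{q_i-q_j}\to\infty$ as $t\to 0$; this cannot be accomplished by a uniformly Lipschitz homotopy. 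The paper's $T_i$ is almost certainly a slip for $T_j$ (the $q_j$-type tori), in which case the radial collapse is contracting and the argument goes through exactly as yours does. In short: your version is the one that actually produces the required metric deformation retraction, and your verification of $(*)_b$ and $(**)_b$ via the $\theta$-separation of rate $1$ (using $b\geq q_j>1$) is sound.
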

\proof
The boundary of $(\widetilde{Z}_{i,j})_\epsilon$ is a disjoint union of tori classified in two kinds: those such that the piece of $(\widetilde{Z}_{i,j})_\epsilon$ induced by them by the conical structure collapse at rate $q_i$, and those that the associated conical piece collapse at rate $q_j$. Recall that $q_j>q_i$. Denote by $(T_i)_\epsilon$ the union of tori of $q_i$ type, and by $T_i$ its associated conical piece. The inclusion 
$T_i\hookrightarrow \widetilde{Z}_{i,j}$ is a metric homotopy invariance according with Definition~\ref{def:1homotopy}. This reduces the problem to prove the isomorphism
$$MD\pi^b_1(T_i)\cong \pi_1((T_i)_\epsilon),$$
but this is entirely analogous to the proof of the previous lemma.
\endproof

Exactly the same proof yields:

\begin{lema}
\label{lem:Cpiece}
There are isomorphisms of fundamental groupoids
$$MD\pi^b_1(C_{k}^-,\fh(Q^-))\cong \pi_1((C_{k}^-)_\epsilon,Q^-),$$
$$MD\pi^b_1(C_{k}^+,\fh(Q^+))\cong \pi_1((C_{k}^+)_\epsilon,Q^+) \ \ \text{for \ }b\geq q_k$$
where $Q^*$ is any set of points in $(C_k^*)_\epsilon$ that meets all the connected components. 
\end{lema}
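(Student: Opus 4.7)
The proof will follow the template of Lemmata~\ref{lem:Ypiece} and~\ref{lem:Zpiece} almost verbatim. By construction, $C_k^-$ is a disjoint union of annular pieces of type $A(q_k,q_k)$; since the MD fundamental groupoid and the topological fundamental groupoid both decompose as disjoint unions over connected components, and $Q^-$ meets each component, I am immediately reduced to a single piece $A(q_k,q_k)$.

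For such a piece I will exploit the locally trivial fibration $\xi|_{C_k^-}:C_k^-\to D_\epsilon^*$, whose existence is built into the choice of the $C_k$. On a single connected component the fibre is a disjoint union of annuli permuted cyclically by the monodromy. Following the proof of Lemma~\ref{lem:Ypiece}, I decompose $D_\epsilon^*$ as in~(\ref{eq:discdecomp}) and pull the decomposition back via $\xi$ to obtain a two-piece subanalytic cover of $A(q_k,q_k)$. Each piece and each connected component of the pairwise intersection admits as a metric deformation retract, in the sense of Definition~\ref{def:1homotopy}, the inner $q_k$-cone over the fibre. By Propositions~\ref{prop:homotopyinvariance} and~\ref{prop:b-conefundgroup}(\ref{item:b-cone small parameter})---valid because $b\geq q_k$---the MD $b$-fundamental group of each such piece and of each intersection component equals the topological fundamental group of the fibre.

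To assemble, I verify conditions $(*)_b$ and $(**)_b$ of Theorem~\ref{theo:SvKGroupoids} for this cover in the same way as Proposition~\ref{prop:check***}: the only pairwise overlaps are collars collapsing at the single rate $q_k\leq b$, triple intersections are empty so $(*)_b$ follows from $(**)_b$, and the proof of $(**)_b$ is literally that of Proposition~\ref{prop:check***} (in fact simpler, since only one collapse rate appears). Theorem~\ref{theo:SvKGroupoids} then identifies $MD\pi^b_1(A(q_k,q_k),\fh(Q^-))$ with the colimit of the associated diagram of groupoids. Computing $\pi_1((A(q_k,q_k))_\epsilon,Q^-)$ by the classical Seifert--van Kampen theorem for groupoids applied to the corresponding topological cover of the link yields the same colimit diagram, and the two groupoids are thus canonically isomorphic; the case of $C_k^+$ is handled identically.

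The main point that needs care, exactly as in Lemma~\ref{lem:Zpiece}, is the verification that the retraction onto the inner $q_k$-cone over the fibre is a metric homotopy in the sense of Definition~\ref{def:1homotopy} (with a uniform l.v.a.\ constant) and is compatible with the conical structure fixed in~(\ref{eq:fixh}), so that the identifications of base points in $\fh(Q^-)$ induced on the two sides of the comparison agree. Since both collapse rates of $A(q_k,q_k)$ coincide with $q_k$, these checks are strictly easier than in Lemma~\ref{lem:Zpiece}, where the presence of two distinct rates $q_i<q_j$ was what forced the auxiliary reduction via $T_i$.
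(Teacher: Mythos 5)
Your proposal is correct and essentially reproduces what the paper intends by its one-line proof (``Exactly the same proof yields:''), unpacking the reduction to the fibration argument of Lemma~\ref{lem:Ypiece} via the decomposition~(\ref{eq:discdecomp}) and the metric deformation retract onto the $q_k$-cone over the fibre. Your observation that the auxiliary retraction step of Lemma~\ref{lem:Zpiece} degenerates here because both collapse rates of $A(q_k,q_k)$ coincide is accurate and matches the paper's logic.
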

To study the  $b$-MD fundamental groupoid of the piece $\widetilde{U}_{>b}$ we define the space $(\widetilde{V}_{>b})_\epsilon$ as follows. Let us start considering the fibration 
$$\xi|_{(U_{>b})_\epsilon}:(U_{>b})_\epsilon\to \partial D^*_\epsilon$$
with fibre $F_{>b}$. Consider the decomposition in connected components $$F_{>b}=\coprod_{j=1}^N\coprod_{k=1}^{M_j} F_{>b,j,k},$$
where the components $F_{>b,j,1},...,F_{>b,j,M_j}$ are interchanged cyclically by the monodromy. Two components $F_{>b,j,k}$ and $F_{>b,j',k'}$ are in the same connected component of $(U_{>b})_\epsilon$ if and only if $j=j'$. We denote the decomposition of $(U_{>b})_\epsilon$ in connected components by 
$$(U_{>b})_\epsilon=\coprod_{j=1}^N(U_{>b})_\epsilon^j.$$
For each $j$ there exists a fibration 
$$\eta_j:(U_{>b})_\epsilon^j\to\SSS^1$$ 
with connected fibre and a unique covering $\rho_j:\SSS^1\to\partial D^*_\epsilon$ of degree $M_j$ such that $\rho_j\comp\eta_j=\xi|_{[(U_{>b})_\epsilon]_j}$. 
Define
$$(\widetilde{V}_{>b})_\epsilon:=(\widetilde{U}_{>b})_\epsilon\cup_{(U_{>b})_\epsilon} \coprod_{j=1}^N Cyl(\eta_j)$$
where $Cyl(\eta_j)$ is the mapping cylinder of and $\cup _{(U_{>b})_\epsilon}$ denotes the gluing of each piece  $Cyl(\eta_j)$ along $(U_{>b})_\epsilon\cap Cyl(\eta_j)$.

\begin{lema}
\label{lem:Upiece}
There is an isomorphism of fundamental groupoids
$$MD\pi^{b}_1(\widetilde{U}_{>b},\fh(Q))\cong \pi_1((\widetilde{V}_{>b})_\epsilon,(\widetilde{U}_{>b})_\epsilon)$$
where $Q$ is any set of points in $(\widetilde{U}_{>b})_\epsilon$ that meets all the connected components. 
\end{lema}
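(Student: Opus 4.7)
The plan is to apply the metric Seifert--van Kampen theorem (Theorem~\ref{theo:SvKGroupoids}) to $\widetilde{U}_{>b}$ with the sub-cover $\calV_b$ consisting of the pieces $Y_k$ with $q_k>b$ and $\widetilde{Z}_{i,j}$ with $q_j>b$, and then match the resulting colimit with $\pi_1((\widetilde{V}_{>b})_\epsilon, Q)$ computed by classical Seifert--van Kampen on a parallel cover of $(\widetilde{V}_{>b})_\epsilon$. Conditions $(*)_b$ and $(**)_b$ for $(\calV_b,\fh(Q))$ follow by the same argument used in Proposition~\ref{prop:check***}, because every pairwise intersection is a collar $C_k^\pm$ of constant rate $q_k>b$, so the slow/fast splitting argument there goes through verbatim.

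Next I would compute the $b$-MD fundamental groupoid of each piece of $\calV_b$. For a connected component $Y_k^j$ of $Y_k$ with $q_k>b$, I adapt the proof of Lemma~\ref{lem:Ypiece}: pulling back the decomposition~(\ref{eq:discdecomp}) of $D^*_\epsilon$ via $\xi|_{Y_k^j}$ gives two pieces whose pairwise intersection has two connected components, each metrically deformation retracting onto the inner $q_k$-cone over the corresponding connected fiber surface. Since $b<q_k$, Proposition~\ref{prop:b-conefundgroup}\,(\ref{item:b-cone big parameter}) tells us that these metric retracts have \emph{trivial} $b$-MD fundamental groupoid, in contrast to the situation of Lemma~\ref{lem:Ypiece}. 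A second application of Theorem~\ref{theo:SvKGroupoids} then identifies $MD\pi^b_1(Y_k^j,\fh(Q))$ with the fundamental groupoid of the base $\SSS^1$ of $\eta_j$, which is precisely that of the mapping cylinder $Cyl(\eta_j)$. The same argument handles $\widetilde{Z}_{i,j}$ with $q_i,q_j>b$. For mixed pieces $\widetilde{Z}_{i,j}$ with $q_i\leq b<q_j$, the metric retraction of Lemma~\ref{lem:Zpiece} onto the slow-side boundary torus $T_i$ still applies and yields $MD\pi^b_1(\widetilde{Z}_{i,j},\fh(Q))\cong \pi_1((\widetilde{Z}_{i,j})_\epsilon,Q)$.

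To conclude, I would compute $\pi_1((\widetilde{V}_{>b})_\epsilon,Q)$ by classical Seifert--van Kampen on the parallel cover of $(\widetilde{V}_{>b})_\epsilon$ obtained by replacing each $(Y_k^j)_\epsilon\subseteq (U_{>b})_\epsilon$ and each $(\widetilde{Z}_{i,j})_\epsilon$ with $q_i>b$ by the corresponding mapping cylinder $Cyl(\eta_j)$, while leaving the mixed $\widetilde{Z}_{i,j}$ pieces unchanged. Piece by piece, and on every intersection (an annular collar retracting onto a circle), the classical fundamental groupoids agree with the $b$-MD computations of the previous paragraph, so the two colimits are canonically isomorphic. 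This gives the desired isomorphism $MD\pi^{b}_1(\widetilde{U}_{>b},\fh(Q))\cong \pi_1((\widetilde{V}_{>b})_\epsilon,(\widetilde{U}_{>b})_\epsilon)$, with the natural morphism being induced by the geometric collapsing of each $\eta_j$-fiber to its image in $\SSS^1$.

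The main obstacle I anticipate is the computation on the mixed pieces $\widetilde{Z}_{i,j}$ with $q_i\leq b<q_j$. Lemma~\ref{lem:Zpiece} is stated for $b\geq q_j$, but the underlying metric retraction onto $T_i$ should only require $b\geq q_i$, since it is the slow rate that controls whether the fast side can be collapsed through a $b$-homotopy equivalence; verifying this carefully---together with checking that the base points $\fh(Q)$ can be transported along the retraction without disturbing $b$-equivalence---is the step where the metric geometry of the $A(q_i,q_j)$ annular blocks meets the groupoid machinery most delicately.
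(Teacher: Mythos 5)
The paper proves this lemma by a single application of the metric Seifert--van Kampen theorem to the pullback under $\xi|_{\widetilde{U}_{>b}}$ of the two-arc decomposition~(\ref{eq:discdecomp}) of $D^*_\epsilon$, reducing everything to the claim that the preimage $B$ of a single ray is $b$-contractible, and then making the identical decomposition of $(\widetilde{V}_{>b})_\epsilon$ over $\partial D^*_\epsilon$ with contractible pieces. Your route is genuinely different: you decompose $\widetilde{U}_{>b}$ into the individual $Y_k$ and $\widetilde{Z}_{i,j}$ pieces, compute each one separately, and assemble. That is a reasonable strategy in principle, but as written it has two concrete problems.

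First, and most seriously, your claim that for a mixed piece $\widetilde{Z}_{i,j}$ with $q_i\leq b<q_j$ one still has $MD\pi^b_1(\widetilde{Z}_{i,j},\fh(Q))\cong \pi_1((\widetilde{Z}_{i,j})_\epsilon,Q)$ is false, and your guiding intuition (``it is the slow rate that controls whether the fast side can be collapsed'') has the roles of the two rates inverted. The link of $\widetilde{Z}_{i,j}$ is $T^2\times I$, so $\pi_1((\widetilde{Z}_{i,j})_\epsilon)\cong\ZZ^2$, generated by the $\SSS^1$ direction of the fibration and the core circle of the annulus. But the core circle of the annulus can be pushed, by a continuous (hence weak $b$-) homotopy inside $\widetilde{Z}_{i,j}$, onto the inner boundary, which has diameter $\sim t^{q_j}$. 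Since $q_j>b$ this diameter is $o(t^b)$, so the resulting small loop can be killed by a single weak $b$-jump across the hole; thus the annulus generator dies in $MD\pi^b_1(\widetilde{Z}_{i,j})$ and $MD\pi^b_1(\widetilde{Z}_{i,j})\cong\ZZ$, not $\ZZ^2$. The hypothesis $b\geq q_j$ in Lemma~\ref{lem:Zpiece} is therefore essential and cannot be weakened to $b\geq q_i$; the fast circle is always $b$-contractible once $q_j>b$, regardless of what happens on the slow side, and no retraction to the slow boundary torus can change that. Note also that the retraction $\widetilde{Z}_{i,j}\to T_i$ you invoke is not Lipschitz with a constant uniform in $t$ (it stretches the inner circle, of radius $\sim t^{q_j}$, to the outer one, of radius $\sim t^{q_i}$, a factor $t^{q_i-q_j}\to\infty$), so it cannot be used as a metric homotopy in the sense of Definition~\ref{def:1homotopy} in the mixed case. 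Because your parallel cover of $(\widetilde{V}_{>b})_\epsilon$ ``leaves the mixed $\widetilde{Z}_{i,j}$ pieces unchanged'' (contributing $\ZZ^2$), your two colimits would be matched piece by piece against each other but not against the actual groupoids; once the $MD$ side is corrected to $\ZZ$ the alleged match breaks and the topological side must be re-examined (you actually need to include the part of $Cyl(\eta_j)$ lying over $C_j^-\subseteq\widetilde{Z}_{i,j}$, which does kill the fast circle and gives $\ZZ$). The paper avoids this whole issue by never computing the mixed piece on its own: the $b$-contractibility of the ray preimage $B$ absorbs exactly this phenomenon.

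Second, the assertion that conditions $(*)_b$ and $(**)_b$ for $\calV_b$ ``go through verbatim'' from Proposition~\ref{prop:check***} is not correct. The proof of Proposition~\ref{prop:check***} uses in an essential way that the collars have rate $q_k<b$, so that $d_{inn}((B\setminus C)_\epsilon,(B'\setminus C)_\epsilon)\sim\epsilon^{q_k}\gg\epsilon^b$ and a $b$-equivalence of paths forces at least one of them into the collar. In your cover $\calV_b$ every pairwise intersection is a collar $C_k^\pm$ with $q_k>b$, so $\epsilon^{q_k}\ll\epsilon^b$ and that dichotomy simply fails: the two paths can be $b$-equivalent while both lying well outside the collar. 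If $(**)_b$ does hold for $\calV_b$ (which is plausible, by pushing a path into the narrow collar across the small $F$-direction), it needs a genuinely new argument, not the one in Proposition~\ref{prop:check***}.
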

\proof
In order to compute the left hand side notice that the $\xi|_{\widetilde{U}_{>b}}:\widetilde{U}_{>b}\to D^*_\epsilon$ induces a decomposition of $\widetilde{U}_{>b}$ by pullback of the decomposition~(\ref{eq:discdecomp}). Arguments similar to the proof of Proposition~\ref{prop:check***} show that Seifert- van Kampen Theorem~\ref{theo:SvKGroupoids} can be applied to this decomposition. 

Like in the proof of Lemma~\ref{lem:Ypiece} we complete the proof for the special case $N=1$ and $M_1=1$, since the general case is similar.

Then, as in the previous two lemmata, each of the two pieces of the decomposition, and each of the two connected component of the intersection between them is metrically homotopy equivalent to a connected component of the preimage $B=(\xi|_{\widetilde{U}_{>b}})^{-1}(\{(t,0):t>0\}$ of the ray $\{(t,0):t>0\}$. 
It is clear that $B$ is $b$-contractible.

In order to compute the right hand side observe that, similarly, we have a fibration $\xi_\epsilon:(\widetilde{V}_{>b})_\epsilon\to \partial D^*_\epsilon$, and the topological Seifert-van Kampen Theorem for groupoids can be applied to the decomposition of $(\widetilde{V}_{>b})_\epsilon$ obtained by pullback of the restriction of decomposition ~(\ref{eq:discdecomp}) to $\partial D^*_\epsilon$. Each connected component of any finite intersection of subsets of the decomposition is contractible. 

Comparing the colimit computations for the left and right hand sides we conclude.
\endproof


%

Finally, the computation of the whole $MD \pi^b_1(X)$ can be codified in  the topology of the following space, which we can call it the \emph{$(b,1)$-homotopy model} of $X$:
\begin{equation}
\label{eq:bmodel}
X^{b}_\epsilon:=X_\epsilon\cup_{(U_{>b})_\epsilon} Cyl(\xi|_{(U_{>b})_\epsilon})=
 X_\epsilon\cup_{\widetilde{U}_{>b}}(\widetilde{V}_{>b})_\epsilon.
\end{equation}
The space $X^b_\epsilon$ can be understood as the result of fibrewise identifying to a point the connected components of the part of the fibres of $\xi$ that collapse to a rate higher than $b$. It has the homotopy type of a plumbed $3$-manifold in which several circles are identified (a ``branched $3$-manifold'' in the language of~\cite{BirbrairNeumannPichon:2014}). 

Observe that if $b\geq b'$ we have a natural continuous map 
$$\alpha_{b,b'}:X^b_\epsilon\to X^{b'}_\epsilon.$$

\begin{theo}
\label{theo:MDpi1inner} Let $(X,0,d_{inn})$ be a normal surface singularity wuth the inner metric. 
Let $P$ be a set of points in $X_\epsilon$ that meets the interior of every connected components of $Y_k$, $C_k^+$, $C_k^-$ and  every $Z_{ij}$. Let $\fP:=  \fh(P)$ be defined after (\ref{eq:notfh}). Then $\pi_1(X^{b}_\epsilon, P)$ is isomorphic to $MD\pi_1^b(X, \fP)$ for any $b\geq 1$. 

If $p$ is any point in $X_\epsilon$ and $\p$ any l.v.a. point in $X$ there is an isomorphism of $\BB$-groups from
$$...\to \pi_1(X^{b}_\epsilon, p)\to \pi_1(X^{b'}_\epsilon, p)\to ...$$
to $MD\pi_1^{\star}(X,0,d_{inn},\p)$, that is to $$...\to MD\pi_1^b(X, \p)\to MD\pi_1^{b'}(X, \p)\to ... .$$
\end{theo}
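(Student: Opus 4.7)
The plan is to compare two applications of Seifert--van Kampen — the metric version (Theorem~\ref{theo:SvKGroupoids}) on $X$ with the cover $\calU^b$, and the classical topological version on a matching open cover of $X^b_\epsilon$ — and then check that the resulting colimit diagrams are isomorphic piece by piece via the lemmata already proved. Proposition~\ref{prop:check***} supplies the hypotheses $(*)_b$ and $(**)_b$, so $MD\pi_1^b(X,\fP)$ is the colimit of the groupoids $MD\pi_1^b(U,\fP)$ as $U$ ranges over the elements of $\calU^b$ and their pairwise intersections (which are exactly the $C_k^\pm$ pieces, since $3$-fold intersections are empty).

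On the topological side, I would build a corresponding open cover $\calV^b$ of $X^b_\epsilon$ whose elements are small open thickenings of $(Y_k)_\epsilon$ for $q_k\le b$, of $(\widetilde Z_{i,j})_\epsilon$ for $q_i<q_j\le b$, and of $(\widetilde V_{>b})_\epsilon$, arranged so that pairwise intersections deformation retract to the relevant $(C_k^\pm)_\epsilon$ and triple intersections are empty. Classical Seifert--van Kampen for groupoids then expresses $\pi_1(X^b_\epsilon, P)$ as the colimit of the same shape diagram. Lemmata~\ref{lem:Ypiece},~\ref{lem:Zpiece},~\ref{lem:Cpiece},~\ref{lem:Upiece} give isomorphisms at every vertex of the two diagrams; crucially, the mapping cylinder in the definition~(\ref{eq:bmodel}) of $X^b_\epsilon$ is designed precisely so that Lemma~\ref{lem:Upiece} produces $\pi_1((\widetilde V_{>b})_\epsilon,(\widetilde U_{>b})_\epsilon)$ on the nose — without this completion one would only see the MD-invariant, not the honest topological fundamental group, because of the monodromy in the fibres of $\xi$ that collapse at rate higher than $b$.

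I would then check that these piecewise isomorphisms are compatible with the inclusion-induced morphisms on each side. For the $Y_k$-, $\widetilde Z_{i,j}$- and $C_k^\pm$-pieces this is immediate because the isomorphisms in Lemmata~\ref{lem:Ypiece}--\ref{lem:Cpiece} are all induced by the conical-structure map~(\ref{eq:fixh}) restricted to retraction rays through the chosen base points; since the inclusions in the cover are just metric inclusions that respect $h$, naturality is automatic. For the $\widetilde U_{>b}$-piece the compatibility comes down to the observation that the mapping-cylinder retraction $(\widetilde V_{>b})_\epsilon\to(\widetilde U_{>b})_\epsilon$ matches, at the level of fundamental groupoids, the $b$-homotopy provided inside $\widetilde U_{>b}$ by the fact that each fibre of $\xi|_{U_{>b}}$ collapses at rate $>b$. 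Having matched the two colimit diagrams, naturality of colimits yields the isomorphism $MD\pi_1^b(X,\fP)\cong\pi_1(X^b_\epsilon,P)$ of groupoids; restricting to isotropy at any chosen object and using Proposition~\ref{prop:indbasepoint} to change base point to an arbitrary l.v.a.\ point $\p$ produces the group-level statement.

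For the $\BB$-group assertion, the maps $\alpha_{b,b'}$ and $\eta_{b,b'}$ need to be compared. Because $\calU^{b'}$ refines $\calU^{b}$ in a controlled way (small-rate pieces $Y_k$, $\widetilde Z_{i,j}$ with $b'<q_j\le b$ get absorbed into $\widetilde U_{>b'}$), and $\alpha_{b,b'}$ is by construction the map that collapses the corresponding fibres of $\xi$, the induced maps between the two colimit diagrams agree under the piecewise identifications above. The main obstacle I anticipate is this last verification: ensuring that the mapping-cylinder collapse realised topologically by $\alpha_{b,b'}$ corresponds exactly to the $b'$-equivalence relation that kills $MD\pi_1^b$-classes supported in pieces with rate in $(b',b]$, so that the square of groupoid morphisms
\[
\begin{CD}
MD\pi_1^b(X,\p) @>\sim>> \pi_1(X^b_\epsilon,p)\\
@V\eta_{b,b'}VV @VV(\alpha_{b,b'})_*V\\
MD\pi_1^{b'}(X,\p) @>>\sim> \pi_1(X^{b'}_\epsilon,p)
\end{CD}
\]
commutes; once this is verified at the level of each piece of $\calU^b$ using Lemmata~\ref{lem:Ypiece}--\ref{lem:Upiece}, the universal property of colimits propagates the commutation to the whole $\BB$-group.
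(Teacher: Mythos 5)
Your proposal follows exactly the paper's own strategy: compare the metric Seifert--van Kampen computation of $MD\pi_1^b(X,\fP)$ with the cover $\calU^b$ against the topological Seifert--van Kampen computation of $\pi_1(X^b_\epsilon,P)$ with the corresponding cover, and match the two colimit diagrams vertex by vertex via Lemmata~\ref{lem:Ypiece}--\ref{lem:Upiece}. The paper's proof is stated very tersely and leaves the naturality and $\BB$-group compatibility checks implicit; your elaboration of those points is consistent with what is intended.
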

\proof
The proof consists in comparing the computation of $MD\pi_1^b(X, \fP)$ by the $MD$ Seifert-van Kampen Theorem, Theorem \ref{theo:SvKGroupoids} with the coverings $\calU^b$ in (\ref{eq:calUb}), and of $\pi_1(X^{b}_\epsilon, P)$ by the topological Seifert van-Kampen Theorem associated with the covering obtained by restriction of (\ref{eq:calUb}), and use Lemmata~\ref{lem:Ypiece},~\ref{lem:Zpiece},~\ref{lem:Cpiece} and~\ref{lem:Upiece}. 
\endproof

Using the covering (\ref{eq:calUb}) and the models (\ref{eq:bmodel})  we can also compute the MD homology of a complex surface singularity: 

\begin{theo}\label{theo:MDHSurfaces} 
Let $(X,0,d_{inn})$ be a normal surface singularity with the inner metric. For every $n\in \NN$ we have isomorphisms from 
$$...\to H_n(X^{b}_\epsilon)\to H_1(X^{b'}_\epsilon)\to ...$$
to $MDH_n^{\star}(X,0,d_{inn},\p)$, that is to $$...\to MDH_n^b(X)\to MDH_1^{b'}(X)\to ... .$$
\end{theo}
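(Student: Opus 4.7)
The plan is to run the same argument as in Theorem~\ref{theo:MDpi1inner} at the level of homology, using the MD-Homology Mayer--Vietoris sequence of~\cite{MDH} in place of the MD-Seifert--van Kampen theorem. Concretely, I would apply the MD Mayer--Vietoris sequence to the cover $\calU^b$ of Equation~(\ref{eq:calUb}). Its admissibility can be verified exactly as in Proposition~\ref{prop:check***}: the cover has only $2$-fold intersections, each a disjoint union of constant-rate annular pieces $C_k^{\pm}$ with $q_k \leq b$, and the splitting argument given there (expressing a $b$-equivalent pair of points from adjacent pieces by a path in the intersection) provides the input required for the MD Mayer--Vietoris of~\cite{MDH}.

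Next I would compute the MD-Homology of each member of $\calU^b$ and of each $2$-fold intersection, paralleling Lemmata~\ref{lem:Ypiece}--\ref{lem:Upiece}. For the pieces $Y_k$, $\widetilde{Z}_{i,j}$ and $C_k^{\pm}$ whose rates are $\leq b$, the same arguments as in those lemmata (pullback of the decomposition~(\ref{eq:discdecomp}) together with metric homotopy invariance of Proposition~\ref{prop:homotopyinvariance} and Proposition~\ref{prop:b-conefundgroup}, and the MD Mayer--Vietoris applied to each sub-decomposition) identify $MDH_n^b$ of each piece with the singular homology of its link. For the fast-collapsing piece $\widetilde{U}_{>b}$, the fibrewise collapsing used in Lemma~\ref{lem:Upiece} has to be transposed from loops to chains, and yields $MDH_n^b(\widetilde{U}_{>b}) \cong H_n((\widetilde{V}_{>b})_\epsilon)$.

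On the topological side, the restriction of $\calU^b$ to $X_\epsilon$ together with the replacement of $(\widetilde{U}_{>b})_\epsilon$ by $(\widetilde{V}_{>b})_\epsilon$ gives, by (\ref{eq:bmodel}), a cover of $X^b_\epsilon$ to which the classical Mayer--Vietoris long exact sequence applies and whose terms match piece by piece with the MD Mayer--Vietoris computed above. Naturality of the comparison isomorphisms produces a morphism between the two long exact sequences, and the five lemma then delivers $MDH_n^b(X) \cong H_n(X^b_\epsilon)$. The isomorphism of $\BB$-groups follows from the naturality of MD Mayer--Vietoris in $b$ combined with the continuous maps $\alpha_{b,b'}: X^b_\epsilon \to X^{b'}_\epsilon$, exactly as in Theorem~\ref{theo:MDpi1inner}.

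The main obstacle, as in the fundamental groupoid case, is the analysis of $MDH_n^b(\widetilde{U}_{>b})$. One must show that fibrewise collapsing of the fast-collapsing components of the fibres of $\xi$ is realized by a morphism of MD-chain complexes which, after comparison with the mapping-cylinder model $(\widetilde{V}_{>b})_\epsilon$, is a quasi-isomorphism in all degrees. The remaining ingredient, namely checking that the MD Mayer--Vietoris hypotheses of~\cite{MDH} are satisfied both for $\calU^b$ and for the pullback of~(\ref{eq:discdecomp}) inside each piece, reduces to the same rate-separation estimate exploited in the proof of Proposition~\ref{prop:check***} and in Lemmata~\ref{lem:Ypiece}--\ref{lem:Upiece}, and is therefore routine once the $\widetilde{U}_{>b}$ computation is in place.
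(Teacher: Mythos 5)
Your overall strategy is the same as the paper's — replace Seifert--van Kampen with the MD Mayer--Vietoris theorem of~\cite{MDH}, compute the MD-homology of the pieces and their intersections, and compare with the classical Mayer--Vietoris computation for $X^b_\epsilon$. But there is a concrete gap in the first step: you apply the MD Mayer--Vietoris theorem directly to the cover $\calU^b$ from~(\ref{eq:calUb}) and justify its admissibility by pointing to Proposition~\ref{prop:check***}. That proposition verifies conditions $(*)_b$ and $(**)_b$, which are the hypotheses of Theorem~\ref{theo:SvKGroupoids}; they are not the hypotheses of the MD Mayer--Vietoris theorem (Theorem~98 of~\cite{MDH}), which requires the cover to be a \emph{$b$-covering} in the sense of Definition~92 of~\cite{MDH}. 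The two notions are genuinely different: a $b$-covering requires controlled enlargements of each cover element and of each finite intersection, and the cover $\calU^b$, whose elements abut along entire collar annuli $C_k^\pm$, does not automatically supply these.

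The paper deals with this by replacing $\calU^b$ with a modified cover $\calU^*_b$ in which each collar $C_k^\bullet$ is cut into three subannuli $C_k^{\bullet,Y}\cup C_k^{\bullet,\mathrm{middle}}\cup C_k^{\bullet,Z}$; the pieces $Y_k^*$, $Z_{ij}^*$ and $\widetilde{U}^*_{>b}$ are then enlarged to include the appropriate two thirds. The extra third is precisely what furnishes the enlargements demanded by Definition~92, and the fact that these enlargements compute the same $MD$-homology is then a consequence of MD-homotopy invariance (not of the rate-separation estimate in Proposition~\ref{prop:check***}). So either adopt the modified cover $\calU^*_b$, or independently verify that $\calU^b$ is a $b$-covering in the sense of~\cite{MDH} — but a citation of Proposition~\ref{prop:check***} does not do this. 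Once that is fixed, the remainder of your outline (piecewise computations paralleling Lemmata~\ref{lem:Ypiece}--\ref{lem:Upiece}, the homological version of the $\widetilde{U}_{>b}$ collapsing argument, comparison of the two long exact sequences, naturality in $b$) is in line with what the paper does, although the paper leaves most of that comparison to the reader.
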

\proof
The proof is similar to the previous Theorem, but we have to replace Seifert- van Kampen arguments by Mayer-Vietoris ones.
We modify slightly the covering: consider
\begin{equation}
\label{eq:calUb*}
\calU^*_b:=\{{Y}^*_k: q_k\leq b\}\cup\{\widetilde{Z}^*_{i,j}: q_i<q_j\leq b\} \cup \{\widetilde{U}^*_{>b}.\}
\end{equation}  
defined as follows. Split the annuli $C_k^{\bullet}$, for $\bullet\in \{+,-\}$ in $3$ equal subannuli
$$C_k^{\bullet}=C_k^{\bullet,Y}\cup C_k^{\bullet,\mathrm{middle}}\cup C_k^{\bullet,Z},$$
where $C_k^{\bullet,Y}$ is the third of the annulus adjacent to the $Y$-piece and $C_k^{\bullet,Z}$ is the piece of the annulus adjacent to the $Z$-piece.
Define
$${Z}_{ij}^*:=Z_{ij}\cup C_j^{-,Z}\cup C_j^{-,\mathrm{middle}}\cup C_i^{+,Z}\cup C_i^{+,\mathrm{middle}},$$
$${Y}^*_k:=Y_k\cup C_j^{-,Y}\cup C_j^{-,\mathrm{middle}}\cup C_i^{+,Y}\cup C_i^{+,\mathrm{middle}},$$
and $\widetilde{U}_{>b}^*$ similarly. 

This is a $b$-covering as in Definition 92 in \cite{MDH}: the subsets extending each of the subsets of the cover and their finite intersections are obtained adding the relevant $1/3$-pieces of the corresponding annuli, and checking that such a choice works follows is a simple use of the MD Homology invariance by metric homotopy. Then, we can apply repeately the Mayer-Vietoris type theorem, Theorem 98 in \cite{MDH} in order to compute the $b-MD$ Homology of $X$. Computing the Homology groups of $X_\epsilon^b$ applying repeatedly the Mayer-Viertoris sequence for ordinary homology for the decomposition of $X_\epsilon^b$ corresponding to the decomposition of $X$ defined above, and comparing the computations yields the result.  
\endproof

Let us finish with a few open problems:
\begin{problem}\label{prob:1}
Let $(X,O)$ be a normal surface singularity,
\begin{enumerate}
 \item If $(X,O)$ is not a cyclic quotient, does the MD-fundamental group determine the inner geometry of a normal  complex surface singularity? This is motivated by the corresponding statement, due to Waldhausen, for the topology.
 \item Find a homotopy model computing $MD\pi_1^b(X,O,d_{out})$.
 \item If the natural homomorphism $MD\pi_1^{b}(X,O,d_{inn})\to MD\pi_1^b(X,O,d_{out})$ is an isomorphism for every $b$, is $(X,O)$ Lipschitz normally embedded?
\end{enumerate}
\end{problem}

\begin{problem}
\label{prob:2}
 Compute $MD\pi_1^b$ and $MD H_\star^b$ for any Brieskorn-Pham singularity. The higher homotopy group computation should be very hard, since it contains the homotopy groups of spheres.
\end{problem}

\end{document}